\documentclass[reqno,11pt,20pt]{amsart}
\usepackage[ansinew]{inputenc}
\usepackage[english]{babel}
\usepackage[all]{xy}
\usepackage{amsmath,latexsym,amssymb,verbatim}
\textwidth 460pt 
\setlength{\evensidemargin}{0.2cm}
\setlength{\oddsidemargin}{0.2cm}
\input arrow.tex

\newcommand{\OO}{{\mathcal O}}

\newcommand{\codim}{{\operatorname{codim}}}

\newcommand{\LL}{{\mathbb L}}
\newcommand{\FF}{{\mathtt F}}
\newcommand{\M}{{\mathcal M}}
\newcommand{\I}{{\mathcal I}}

\newcommand{\Lc}{{\mathcal L}}
\newcommand{\NN}{{\mathbb N}}
\newcommand{\ZZ}{{\mathbb Z}}
\newcommand{\HH}{{\mathbb H}}

\newcommand{\PP}{\mathbb P}
\newcommand{\RR}{\mathbb R}
\newcommand{\0}{\mathbf 0}
\newcommand{\un}{\mathbf 1}

\newcommand{\mm}{{\mathfrak m}}

\DeclareMathOperator{\limi}{{lim}}

\newcommand{\Spec}{\operatorname{Spec}}
\newcommand{\Proj}{\operatorname{Proj}}

\newcommand{\enumera}{\begin{enumerate}}
\newcommand{\eenumera}{\end{enumerate}}
\newcommand{\C}{{\mathcal C}}

\newcommand{\A}{{\mathbb A}}

\newcommand{\stella}{{\scriptscriptstyle\bigstar}}
\DeclareMathOperator{\Ker}{{Ker}} \DeclareMathOperator{\Ima}{{Im}}
\DeclareMathOperator{\Hom}{{Hom}}
\DeclareMathOperator{\Kos}{{Kos}}
\DeclareMathOperator{\Res}{{Res}}
\DeclareMathOperator{\di}{{d}}

\newcommand{\ilim}[1]{\,\underset{#1}{\underset{\to}{\limi}}\,}
\newcommand{\alineas}[1]{\begin{array}{#1}}
\newcommand{\alinea}{\begin{array}{l}}
\newcommand{\ealinea}{\end{array}}
\newcommand{\ealineas}{\end{array}}

\newcommand{\pun}{{\scriptscriptstyle \bullet}}

\newcommand{\HHom}{\mathcal{H}om}
\DeclareMathOperator{\Link}{Link}
\newcommand{\HExt}{\mathcal{E}xt}

\newcommand{\Qcoh}{\operatorname{Qcoh}}
\newcommand{\Perf}{\operatorname{Perf}}
\newcommand{\Coh}{\operatorname{Coh}}

\newcommand{\Shv}{\operatorname{Shv}}

\newcommand{\supp}{\operatorname{supp}}

\DeclareMathOperator{\Ext}{{Ext}}

\DeclareMathOperator{\Coker}{{Coker}}

\DeclareMathOperator{\id}{{id}}


\theoremstyle{plain}
\newtheorem{thm}{Theorem}[section]
\newtheorem{lem}[thm]{Lemma}
\newtheorem{cor}[thm]{Corollary}
\newtheorem{prop}[thm]{Proposition}
\newtheorem{defn}[thm]{Definition}
\newtheorem{rem}[thm]{Remark}
\newtheorem{rems}[thm]{Remarks}

\newtheorem*{ex}{Example}
\newtheorem*{exs}{Examples}

\newtheorem{cosa}[thm]{}

\numberwithin{equation}{thm}

%

\begin{document}

\title{A geometrization of Stanley--Reisner theory}

\author{Fernando Sancho de Salas and Alejandro Torres Sancho}

\address{ Departamento de
Matem\'aticas and Instituto Universitario de F\'isica Fundamental y Matem\'aticas (IUFFyM)\newline
Universidad de Salamanca\newline  Plaza de la Merced 1-4\\
37008 Salamanca\newline  Spain}
\email{fsancho@usal.es}
\email{atorressancho@usal.es}

\subjclass[2020]{13F55, 13H10,  14A23}

\keywords{simplicial complexes, Stanley--Reisner, schemes, Cohen-Macaulayness, field with one element}

\thanks {Work supported by Grant PID2021-128665NB-I00 funded by MCIN/AEI/ 10.13039/501100011033 and, as appropriate, by ``ERDF A way of making Europe''.}
 
\begin{abstract}   We give a geometric interpretation of the Stanley--Reisner correspondence, extend it to schemes, and interpret it in terms of the field of one element.
\end{abstract}



\maketitle

\section*{Introduction}

Stanley--Reisner theory provides a bridge between combinatorics and commutative algebra. Reisner's criterion for Cohen-Macaulayness and Stanley's proof of the Upper Bound Conjecture for simplicial spheres were undoubtedly two landmarks  in its beginnings. Since then, the correspondence between simplicial complexes and squarefree monomial ideals has been responsible for substantial progress in both fields (see \cite{FMS} for a survey). Very briefly, in this paper we provide a geometric interpretation of the Stanley--Reisner correspondence that leads to an interplay between ordinary schemes and $\FF_1$-schemes. Let us be more precise. Let us start by reformulating the Stanley--Reisner correspondence.\smallskip

 Let us fix a ring $k$. Let us denote $\Delta_n :=\{1,\dots,n\}$ the set of $n$ elements and let 
$$\mathbb{A}^n_{\mathtt{F}_1}\,=\, \{\text{subsets of }\Delta_n\}.$$
It is a finite topological $T_0$-space (in other words, a finite poset), with the topology associated to the partial order given by inclusion. The empty subset of $\Delta_n$ is an element of $\A^n_{\FF_1}$, denoted by $\0$. A finite abstract simplicial complex $K$ (with at most $n$ vertices) is nothing but a closed subset $K\subseteq\mathbb{A}^n_{\mathtt{F}_1}$. According to Stanley--Reisner theory (see \cite{St}, \cite{BH}), such a closed subset defines a monomial ideal $I_K$ of $k[x_1\dots,x_n]$, hence a closed subscheme $S_K$ of the affine space $\mathbb{A}^n_k=\text{Spec}\,k[x_1,\dots,x_n]$,
$$S_K:=\,\text{Spec}\,k[x_1,\dots,x_n]/I_K\,\subseteq\,\mathbb{A}^n_k.$$

We reformulate the Stanley--Reisner correspondence as follows. Let us denote
$$\begin{aligned}\mathrm{Shv}_k(\mathbb{A}^n_{\mathtt{F}_1}) &:=\, \text{category of sheaves of }k\text{-modules on }\mathbb{A}^n_{\mathtt{F}_1}\\
\text{Qcoh}(\mathbb{A}^n_k ) &:=\, \text{category of quasi-coherent modules on }\mathbb{A}^n_k.
\end{aligned}$$
We shall construct, in a natural way, a continuous map 
$$ 
\pi\colon\mathbb{A}^n_k \longrightarrow \mathbb{A}^n_{\mathtt{F}_1}
$$
and an exact $k$-linear functor
$$ \pi^{ \scriptscriptstyle\bigstar} : \text{Shv}_k(\mathbb{A}^n_{\mathtt{F}_1}) \longrightarrow \text{Qcoh}(\mathbb{A}^n_k) $$ 
such that, for any closed subset $K\subseteq \mathbb{A}^n_{\mathtt{F}_1}$ one has:
$$\pi^{-1}(K)\,=\, S_K\quad(\text{topologically})\qquad ,\qquad \pi^{\stella}(k_K)\,=\,\mathcal{O}_{S_K}:=\,\mathcal{O}_{\mathbb{A}^n_k}/I_K $$
where $k_K$ is the constant sheaf $k$ supported on $K$. 
Finally, $\pi^\stella$ has an exact $k$-linear right adjoint
\[ \pi_\stella\colon \Qcoh(\A^n_{k})\to \Shv_k(\A^n_{\FF_1}).\]


The functor $\pi^\stella$ is not new. If we consider the equivalence $\Qcoh(\A^n_k)=\{ R\text{-modules}\}$, with  $R=k[x_1,\dots,x_n]$, then $\pi^\stella (F)$ is the squarefree  $R$-module associated to $F$ by Yanagawa's equivalence (\cite[Thm. 3.3]{Ya01}), at least if $F$ is a sheaf of finite dimensional vector spaces over a field $k$. This explains the relevance of squarefree modules in Stanley--Reisner theory. Our construction of $\pi^\stella$   differs from Yanagawa's, and it is based in a general Yoneda construction that will be needed for a more general framework (see below) and  may be of interest for other posets. 

Our point of view is to interpret $\pi\colon \A^n_k\to\A^n_{\FF_1}$ as a ``morphism of schemes'', where $\pi^\stella$ and $\pi_\stella$ 
play  the role of inverse and direct  images, and $\A^n_{\FF_1}$ is thought as the affine $n$-dimensional $\FF_1$-scheme. It is a fact that, topologically, $\A^n_{\FF_1}$ agrees with Deitmar's affine scheme (\cite{Deitmar}), i.e.,
\[ \A^n_{\FF_1}=\Spec \FF_1[x_1,\dots, x_n]\] where $\FF_1[x_1,\dots,x_n]$ is the free abelian monoid generated by $x_1,\dots,x_n$ (also containing an extra element $0$) and the topology of $\Spec \FF_1[x_1,\dots,x_n]$ is the Zariski topology. More importantly, the category of sheaves on $\A^n_{\FF_1}$ has a remarkable similarity (as we shall try to show) to the category of quasi-coherent modules on an ordinary affine scheme $\A^n_k$. As a first example of this similarity we shall see that Grothendieck's theory of dualizing complexes and Cohen-Macaulayness can be mimicked for the category of sheaves on $\A^n_{\FF_1}$. Thus, we shall see that $\mathbb{A}^n_{\mathtt{F}_1}$ has a canonical sheaf $\omega_{\mathbb{A}^n_{\mathtt{F}_1}}$ which is dualizing (i.e., yields reflexivity, Proposition  \ref{duality-1}) and that dualizes local cohomology (Proposition  \ref{duality-2}). Following Grothendieck, we shall say that a finitely generated sheaf $F$ on $\A^n_{\FF_1}$ is {\em Cohen--Macaulay} if it is $k$-flat and $\RR\HHom_{\A^n_{\FF_1}}^\pun(F,\omega_{\A^n_{\FF_1}})$ is a $k$-flat sheaf (placed in some degree). For any closed subset $K$ of $\A^n_{\FF_1}$,  we shall obtain a canonical complex $\omega_K^\pun$ and we shall prove that  classic Cohen-Macaulayness (defined in terms of the homology of links) coincides with Grothendieck's one, and that the homology of links has an intrinsic meaning in terms of the canonical complex of $K$. More precisely we prove the following theorem (that summarizes several propositions in the paper):

\begin{thm}\label{1} Let $K\subseteq \A^n_{\FF_1}$ be a simplicial complex of codimension $d$, and let us denote
\[ \omega_K^\pun=\RR\HHom_{\A^n_{\FF_1}}^\pun (k_K,\omega_{\A^n_{\FF_1}})_{\vert K}.\] Then:

\begin{enumerate} \item  $ \omega_K^\pun$ is a dualizing complex, i.e., for any $F\in D_\text{\rm perf}(K)$, one has an isomorphism 
\[ F\overset\sim \longrightarrow  \RR\HHom_{K}^\pun(\RR\HHom_{\A^n_{\FF_1}}^\pun(F,\omega_K^\pun),\omega_K^\pun)\]

\item   $ \omega_K^\pun$ dualizes local cohomology: for any $F\in D(K)$ one has:
\[ \RR\Hom_K^\pun(F, \omega_K^\pun)=\RR\Hom_k^\pun (\RR\Gamma_\0(K,F), k)[-n]\]

\item For any $p\in K$ one has:
\[    H^i(\omega_K^\pun)_p= \widetilde H_{n-c_p-i-1}(\Link_K(p),k),\qquad c_p=  \text{  dimension of the closure of }p.\]
\item The following conditions are equivalent:
\begin{enumerate}\item $K$ is Cohen--Macaulay, i.e.: $\widetilde H_i(\Link_K(p),k)=0$ for any $p\in K$ and any $i<\dim\Link_K(p)$.
\item $\omega_K^\pun$ is a sheaf, i.e., $\omega_K^\pun=\omega_K[-d]$ for some sheaf $\omega_K$ (named canonical sheaf of $K$), whose stalkwise description is
\[ (\omega_K)_p= \widetilde H_{m_p}(\Link_K(p),k),\quad m_p=\dim\Link_K(p).\]
\item $\HExt_{\A^n_{\FF_1}}^i(k_K,\omega_{\A^n_{\FF_1}})=0$ for any $i\neq d$.
\end{enumerate}
\end{enumerate}
\end{thm}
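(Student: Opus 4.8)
The plan is to deduce parts (1) and (2) formally from the ambient duality statements (Propositions \ref{duality-1} and \ref{duality-2}) together with the adjunction attached to the closed immersion $i\colon K\hookrightarrow \A^n_{\FF_1}$, and then to reduce the whole theorem to the single stalkwise computation (3), from which (4) follows by elementary bookkeeping of homological degrees. For (1) and (2) I would regard $\omega_K^\pun=\RR\HHom^\pun_{\A^n_{\FF_1}}(k_K,\omega_{\A^n_{\FF_1}})_{\vert K}$ as the exceptional inverse image $i^!\omega_{\A^n_{\FF_1}}$, using that $i_*k_K=k_K$ and $\RR\HHom_{\A^n_{\FF_1}}(k_K,-)=\RR\Gamma_K$ on the finite poset. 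First I would establish the Grothendieck-duality adjunction $i_*\RR\HHom^\pun_K(F,i^!G)\cong \RR\HHom^\pun_{\A^n_{\FF_1}}(i_*F,G)$ for the exact fully faithful pushforward $i_*$ (essentially the sheaf-Hom adjunction, $i_*$ being exact for closed immersions of posets). Part (1) then follows by applying this identity twice and invoking reflexivity on the ambient space (Proposition \ref{duality-1}): for $F\in D_{\text{\rm perf}}(K)$ the double-dual map is identified with the ambient one for $i_*F$, hence an isomorphism. Part (2) follows from the same adjunction at the level of global $\Hom$, $\RR\Hom_K^\pun(F,\omega_K^\pun)\cong\RR\Hom_{\A^n_{\FF_1}}^\pun(i_*F,\omega_{\A^n_{\FF_1}})$; since $\0\in K$ and local cohomology at $\0$ commutes with $i_*$, Proposition \ref{duality-2} applied on $\A^n_{\FF_1}$ together with $\RR\Gamma_\0(\A^n_{\FF_1},i_*F)=\RR\Gamma_\0(K,F)$ yields the claimed formula, the global shift $[-n]$ being inherited from the ambient statement.

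The heart of the matter is (3), the stalkwise identification with reduced homology of links, and this is where I expect the real work to lie. On the finite poset $\A^n_{\FF_1}$ the stalk at a point $p=\sigma$ is computed on the minimal open neighbourhood $U_\sigma=\{\tau\supseteq\sigma\}$, so $H^i(\omega_K^\pun)_p=H^i\,\RR\Hom^\pun_{U_\sigma}\!\big(k_{K\cap U_\sigma},\,\omega_{\A^n_{\FF_1}}\vert_{U_\sigma}\big)$. I would first record the explicit local form of $\omega_{\A^n_{\FF_1}}$, and then recognize the open star $K\cap U_\sigma$, together with the induced subcomplex obtained by dropping $\sigma$, as computing the relative cohomology of the pair (closed star, boundary). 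The standard simplicial identity relating this relative cohomology to the reduced homology of $\Link_K(\sigma)$, combined with the two shifts already present---the global $[-n]$ carried by $\omega_{\A^n_{\FF_1}}$ and the local shift by $c_p=\vert\sigma\vert$ coming from the dimension of the closure of $p$---produces the index $n-c_p-i-1$. Getting these shifts to line up exactly is the delicate point; I would pin them down by testing the formula on the two extreme cases $p=\0$ (where the link is all of $K$) and $p$ a maximal face (where the link is a point), and by matching against the ambient computation at the generic point.

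Finally, (4) is a formal consequence of (3). The equivalence (b)$\Leftrightarrow$(c) is immediate once one notes that $\HExt^i_{\A^n_{\FF_1}}(k_K,\omega_{\A^n_{\FF_1}})$ is the $i$-th cohomology sheaf of $\RR\HHom^\pun$, supported on $K$, so its vanishing off degree $d$ is exactly the assertion that $\omega_K^\pun$ is concentrated in degree $d$, i.e.\ equals a single sheaf $\omega_K[-d]$. For (a)$\Leftrightarrow$(b) I would feed the identity of (3) into the dimension count: since $\dim\Link_K(p)=m_p=n-c_p-d-1$, the stalk $H^i(\omega_K^\pun)_p=\widetilde H_{n-c_p-i-1}(\Link_K(p),k)$ vanishes automatically for $i<d$ (the homological degree then exceeds $m_p$), while for $i>d$ its vanishing is precisely the Cohen--Macaulay condition $\widetilde H_j(\Link_K(p),k)=0$ for $j<m_p$. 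Hence $\omega_K^\pun$ is concentrated in degree $d$ stalkwise if and only if (a) holds, and in that case the stalk description $(\omega_K)_p=H^d(\omega_K^\pun)_p=\widetilde H_{m_p}(\Link_K(p),k)$ drops out of (3).
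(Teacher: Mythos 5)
Your overall route is the same as the paper's: parts (1) and (2) reduce to the ambient statements (Propositions \ref{duality-1} and \ref{duality-2}) through the closed-immersion adjunction $i_*\RR\HHom^\pun_K(F,\omega_K^\pun)\cong\RR\HHom^\pun_{\A^n_{\FF_1}}(i_*F,\omega_{\A^n_{\FF_1}})$ (the paper delegates this to general results of \cite{ST}); part (3) is exactly Lemma \ref{Ext=homlink}, obtained by computing the stalk as $\RR\Gamma_p(U_p,k_K)^\vee[-d_p]$, using the local cohomology triangle of the contractible pair $(U_p\cap K,\,U_p^*\cap K)$ and the homeomorphism $\Link_K(p)\simeq U_p^*\cap K$ of Lemma \ref{link}. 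Your sketch of (3) names all the right ingredients, although you leave the shift bookkeeping to be ``pinned down by testing extreme cases'' rather than carried out; the paper closes this cleanly by observing $U_p\simeq\A^{d_p}_{\FF_1}$ and quoting the global case $p=\0$.

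There is, however, a genuine gap in your part (4): you use the identity $m_p=\dim\Link_K(p)=n-c_p-d-1$ as if it were automatic, but in general one only has the inequality $m_p\leq n-c_p-d-1$, with equality for all $p$ precisely when $K$ is \emph{pure}. For a non-pure complex a facet $p$ of non-maximal dimension has $\Link_K(p)=\emptyset$, so $\widetilde H_{-1}(\Link_K(p),k)=k$ sits in cohomological degree $d_p>d$ of $\omega_K^\pun$, and your dimension count breaks down. The paper handles this on both sides of the equivalence: in (a)$\Rightarrow$(c) it first invokes the classical fact that Cohen--Macaulay complexes are pure (citing \cite{BH}), and in (c)$\Rightarrow$(a) it first derives purity from Corollary \ref{genericCM}, which shows that the stalk of $\HExt^i$ at a generic point $q$ is $k$ concentrated in degree $d_q$, so concentration in a single degree forces all facets to have the same dimension and identifies that degree with $d$. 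You need to insert these two purity arguments before the degree comparison; once purity is in hand, your bookkeeping is correct and gives the stalk description of $\omega_K$.
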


Some parts of this theorem are related to \cite[Thm. 4.9]{Ya03} (see also \cite{Mil}, \cite{Ya02}), where $\A^n_{\FF_1}$ and $K$ are replaced by their geometric realizations. Notice that, for Stanley--Reisner theory, it is much better to work on $\A^n_{\FF_1}$ than on its geometric realization $\vert\A^n_{\FF_1}\vert$, since we have  a continuous map $\pi\colon\A^n_k\to\A^n_{\FF_1}$ which does not lift to $\vert\A^n_{\FF_1}\vert$, because any continuous map $\A^n_k\to \vert\A^n_{\FF_1}\vert$ is constant.\medskip

Our reformulation of the Stanley--Reisner correspondence can be done in a more general framework, where the affine scheme $\A^n_k$ is replaced by an arbitrary $k$-scheme, in the following way. Given a $k$-scheme $S$ and a sequence $D_1,\dots,D_n$ of $n$ effective Cartier divisors in $S$, we define a continuous map
$$ f :S \longrightarrow \mathbb{A}^n_{\mathtt{F}_1}$$
and a right exact $k$-linear functor
$$  f^{\stella} : \text{Shv}_k(\mathbb{A}^n_{\mathtt{F}_1}) \longrightarrow \text{Qcoh}(S)$$
such that
$$f^{-1}(H_i)\,=\, D_i\quad(\text{topologically})\qquad,\qquad f^{\stella}(k_{H_i})=\mathcal{O}_{D_i}$$
where the $H_i=\{\text{subsets of }(\Delta_n-\{i\})\}$ are the standard hyperplanes of $\mathbb{A}^n_{\mathtt{F}_1}$. Moreover, for any closed subset $K\subseteq\mathbb{A}^n_{\mathtt{F}_1}$ we have $f^{\stella}(k_K)=\mathcal{O}_{S_K}$ for some closed subscheme $S_K\subseteq S$, whose underlying topological space is $f^{-1}(K)$. 
Finally, $f_\stella$ is defined as the right adjoint of $f^\stella$. 

We shall say that the pair $(f\colon S\to \mathbb{A}^n_{\mathtt{F}_1},\,f^{\stella})$ is the {\it algebraic $k$-morphism} associated to the sequence $D_1,\dots,D_n$. The particular case of the Stanley--Reisner correspondence $\pi^\stella$ mentioned above is  important because any algebraic morphism $f\colon S\to \A^n_{\FF_1}$ factors, locally, as the composition of a morphism of schemes $g\colon S\to\A^n_k$ and $\pi\colon\A^n_k\to\A^n_{\FF_1}$. Moreover $g$ is flat if  $f^\stella $ is exact. 

Given an algebraic morphism $f\colon S\to\A^n_{\FF_1}$, the functors $f^\stella $ and $f_\stella $ are derived to obtain adjoint functors $\LL f^\stella$ and $\RR f_\stella$. The first result (that agrees with viewing $f$ as a ``morphism of schemes'') is that $\RR f_\stella$ respects cohomology:
\[\RR\Gamma(S, \M)=\RR\Gamma(\A^n_{\FF_1},\RR f_\stella\M)\] for any $\M\in D(S)$. If $S$ is a proper $k$-scheme, then $\RR f\stella$ satisfies the finiteness theorem, that is, $\RR f_\stella$ maps $D^b_c(S)$ into $D^b_c(\A^n_{\FF_1})$.

For any $F,G\in D(\A^n_{\FF_1})$ one has a natural morphism
\[\LL f^\stella \RR\HHom_{\A^n_{\FF_1}}^\pun(F,G)\longrightarrow \RR\HHom_S^\pun(\LL f^\stella F,\LL f^\stella G)\] which is not (in fact, almost never!) an isomorphism. Hence, the following fundamental result is a kind of miracle that explains the crucial role of the dualizing sheaf $\omega_{\A^n_{\FF_1}}$:

\begin{thm}[Thm. \ref{extens}]\label{2} For any $F\in D_\text{\rm perf}(\A^n_{\FF_1})$, the natural morphism
\[\LL f^\stella \RR\HHom_{\A^n_{\FF_1}}^\pun(F,\omega_{\A^n_{\FF_1}})\longrightarrow \RR\HHom_S^\pun(\LL f^\stella F,  f^\stella \omega_{\A^n_{\FF_1}})\] is an isomorphism.
\end{thm}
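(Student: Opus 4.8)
The plan is to reduce the statement to a single acyclicity property of the dualizing sheaf, and then to verify that property using the local factorization of $f$ through $\pi$.

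First I would exploit that $F$ is \emph{perfect}. Writing $F^\vee=\RR\HHom^\pun_{\A^n_{\FF_1}}(F,k_{\A^n_{\FF_1}})$ for the dual with respect to the monoidal unit $\OO_{\A^n_{\FF_1}}=k_{\A^n_{\FF_1}}$, perfectness gives a natural isomorphism $\RR\HHom^\pun_{\A^n_{\FF_1}}(F,G)\cong F^\vee\otimes^{\LL}G$ for every $G\in D(\A^n_{\FF_1})$, and the analogous formula holds on $S$ for the perfect object $\LL f^\stella F$. Using that $\LL f^\stella$ is symmetric monoidal (so that it preserves $\otimes^{\LL}$, sends the unit to $\OO_S$, and carries $F^\vee$ to $(\LL f^\stella F)^\vee$), both sides of the asserted map become
\[ \LL f^\stella\RR\HHom^\pun_{\A^n_{\FF_1}}(F,\omega_{\A^n_{\FF_1}})\cong (\LL f^\stella F)^\vee\otimes^{\LL}\LL f^\stella\omega_{\A^n_{\FF_1}},\qquad \RR\HHom^\pun_S(\LL f^\stella F, f^\stella\omega_{\A^n_{\FF_1}})\cong (\LL f^\stella F)^\vee\otimes^{\LL} f^\stella\omega_{\A^n_{\FF_1}},\]
and (after checking that the comparison morphism of the theorem is compatible with these identifications) the map is just $\mathrm{id}_{(\LL f^\stella F)^\vee}\otimes u$, where $u\colon \LL f^\stella\omega_{\A^n_{\FF_1}}\to f^\stella\omega_{\A^n_{\FF_1}}$ is the canonical truncation map from the derived to the underived functor. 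Taking $F=k_{\A^n_{\FF_1}}$ shows the theorem is \emph{equivalent} to $u$ being an isomorphism, that is, to the vanishing of the higher derived functors $\La_i f^\stella\omega_{\A^n_{\FF_1}}=0$ for $i>0$. In short, everything reduces to proving that the dualizing sheaf is $f^\stella$-acyclic.

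To prove this acyclicity I would argue locally on $S$, since both $\La_i f^\stella$ and the formation of $u$ are local. Locally $f$ factors as $f=\pi\circ g$ with $g\colon S\to \A^n_k$ an honest morphism of schemes and $\pi\colon\A^n_k\to\A^n_{\FF_1}$ the Stanley--Reisner map, so that $f^\stella=g^*\circ\pi^\stella$. Because $\pi^\stella$ is exact, $\LL\pi^\stella=\pi^\stella$ and hence $\LL f^\stella=\LL g^*\circ\pi^\stella$, whereby $u$ is identified with the canonical map $\LL g^*(\pi^\stella\omega_{\A^n_{\FF_1}})\to g^*(\pi^\stella\omega_{\A^n_{\FF_1}})$. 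It therefore suffices to show that $\pi^\stella\omega_{\A^n_{\FF_1}}$ is $g^*$-acyclic, i.e.\ $\Tor^{\OO_{\A^n_k}}_i(\pi^\stella\omega_{\A^n_{\FF_1}},\OO_S)=0$ for $i>0$. This follows at once once one knows that $\pi^\stella\omega_{\A^n_{\FF_1}}$ is the canonical sheaf $\omega_{\A^n_k}$ of ordinary affine space, which is an \emph{invertible} $\OO_{\A^n_k}$-module, in particular flat. This is the precise form taken here by the ``miracle'': a general sheaf $G$ pulls back under $\pi$ to a non-flat quotient such as a Stanley--Reisner ring $\OO_{S_K}$, for which $\LL g^*$ genuinely has higher $\Tor$ (explaining the failure of the naive comparison map), whereas the dualizing sheaf pulls back to a line bundle.

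The main obstacle is therefore the explicit identification of $\pi^\stella\omega_{\A^n_{\FF_1}}$ as an invertible sheaf: one must compute $\pi^\stella$ on the dualizing sheaf of the poset and recognize the outcome as locally free of rank one (a twist of $\OO_{\A^n_k}$ by $D_1+\dots+D_n$, which is free of rank one in the universal case). For this I would feed the stalkwise description of $\omega_{\A^n_{\FF_1}}$ furnished by the duality statements quoted above through the Yoneda definition of $\pi^\stella$ and check directly that the result is invertible; flatness, and hence the vanishing of $\La_i f^\stella\omega_{\A^n_{\FF_1}}$, is then automatic. Two formal points must also be dispatched along the way: that $\LL f^\stella$ is symmetric monoidal and preserves perfect complexes (so the dualizability rewriting of the first step is legitimate and $(\LL f^\stella F)^\vee=\LL f^\stella(F^\vee)$), and that the comparison morphism of the theorem is carried to $\mathrm{id}\otimes u$ under these identifications. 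Both are routine once the monoidal structure of $\LL f^\stella=\LL g^*\circ\pi^\stella$ is in place, the single delicate input being the invertibility, and hence flatness, of $\pi^\stella\omega_{\A^n_{\FF_1}}$.
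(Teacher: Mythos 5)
Your reduction breaks at the very first step, and in a way that cannot be repaired. The identity $\RR\HHom^\pun_{\A^n_{\FF_1}}(F,G)\cong F^\vee\otimes^{\LL}G$ does \emph{not} hold for pointwise perfect complexes: ``pointwise perfect'' in this paper is the stalkwise condition $F_p\in\Perf(k)$, not dualizability in the monoidal category $(D(\A^n_{\FF_1}),\otimes_k)$. Concretely, for $F=k_{U_p}$ one computes $F^\vee=\RR\HHom_{\A^n_{\FF_1}}(k_{U_p},k)=k$, so $F^\vee\otimes^{\LL}G=G$, whereas $\RR\HHom_{\A^n_{\FF_1}}(k_{U_p},G)_x=G_{p+x}$; these already disagree for $G=\omega_{\A^n_{\FF_1}}=k_{\{\un\}}$, where the correct answer is $k_{U_{\widehat p}}$ (Lemma \ref{duality-0}), not $k_{\{\un\}}$. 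The second ingredient fails as well: $\LL f^\stella$ is not symmetric monoidal, since the comparison map $f^\stella k_{U_p}\otimes_{\OO_S} f^\stella k_{U_p}\to f^\stella(k_{U_p}\otimes_k k_{U_p})=f^\stella k_{U_p}$ is the multiplication $\Lc_p\otimes\Lc_p\to\Lc_p$ by $s_p$, whose image is $\I_p\Lc_p\subsetneq\Lc_p$. A sanity check confirms something must be wrong: if your rewriting were legitimate it would show the comparison morphism \eqref{fstellahom} is an isomorphism for \emph{every} $f^\stella$-acyclic target $G$ (for instance $G=k_{U_q}$, which is projective and goes to the invertible sheaf $\Lc_q$), yet the paper notes explicitly that the map fails for $F=k_{U_p}$, $G=k_{U_q}$ with $p\ngeq q$. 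Consequently the theorem is not equivalent to the acyclicity of $\omega_{\A^n_{\FF_1}}$; that acyclicity is true but essentially free ($\omega_{\A^n_{\FF_1}}=k_{\{\un\}}=k_{U_\un}$ is projective and $f^\stella\omega_{\A^n_{\FF_1}}=\Lc_\un$ is invertible by construction), and your ``main obstacle'' --- identifying $\pi^\stella\omega_{\A^n_{\FF_1}}$ as a line bundle --- is not where the content of the statement lies.

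The actual mechanism is special to $\omega_{\A^n_{\FF_1}}$ and does not pass through any monoidality. The paper reduces to $F=E\otimes_kk_{U_p}$ with $E\in\Perf(k)$ via the resolution $0\to C_nF\to\cdots\to C_0F\to F\to 0$, whose terms are finite sums of such objects, and then computes both sides separately: $\RR\HHom_{\A^n_{\FF_1}}(E\otimes_kk_{U_p},\omega_{\A^n_{\FF_1}})=E^\vee\otimes_kk_{U_{\widehat p}}$ goes under $\LL f^\stella$ to $E^\vee\otimes_k\Lc_{\widehat p}$, while $\RR\HHom_S(E\otimes_k\Lc_p,\Lc_\un)=E^\vee\otimes_k\Lc_p^{-1}\otimes\Lc_\un=E^\vee\otimes_k\Lc_{\widehat p}$ because $\Lc_p\otimes_{\OO_S}\Lc_{\widehat p}=\Lc_\un$. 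The ``miracle'' is that dualizing $k_{U_p}$ into $k_{\{\un\}}$ yields another sheaf of the form $k_{U_q}$ (namely $q=\widehat p$), and $f^\stella$ converts this combinatorial complementation into the line-bundle identity $\HHom_S(\Lc_p,\Lc_\un)=\Lc_{\widehat p}$. To salvage your write-up you should discard the dualizability reduction entirely and argue on the generators $E\otimes_kk_{U_p}$ directly.
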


As we shall see, this result, in the case of the Stanley--Reisner correspondence $\pi^\stella$, is at the root of some fundamental results of Stanley--Reisner theory like Reisner's criterion and the structure of the canonical complex of a Stanley--Reisner ring. For example, \cite[Thm. 2.6]{Ya00} is an immediate consequence and holds on an arbitrary ring $k$. A  result related to Theorem \ref{2}, stated in terms of the geometric realization of $\A^n_{\FF_1}$, may be found in \cite[Thm. 4.2]{Ya03}.

Combining Theorems \ref{1} and \ref{2}, we obtain the generalization of  Reisner's criterion for any algebraic morphism and any sheaf on $\A^n_{\FF_1}$. We say that $f\colon S\to\mathbb{A}^n_{\mathtt{F}_1}$  is flat  if $f^{\stella}$ is an exact functor. If moreover $\,f^{\stella}(\mathcal M)=0\, \Rightarrow\, \mathcal M =0$ (for any finitely generated $k$-module $\mathcal M$ on $\mathbb{A}^n_{\mathtt{F}_1}$), then we say that $f\colon S\to\mathbb{A}^n_{\mathtt{F}_1}$ is faithfully flat. Then, Reisner's criterion takes the following natural form (if $f$ is thought as a morphism of schemes):\medskip

\begin{thm}[Thm. \ref{ReisnerOnSchemes}]\label{3} Let $S$ be a $k$-scheme and $f\colon S\to\mathbb{A}^n_{\mathtt{F}_1}$ a faithfully flat algebraic $k$-morphism. Let $K\subseteq\mathbb{A}^n_{\mathtt{F}_1}$ be a closed subset and $S_K=f^{-1}(K)$ the associated closed subscheme of $S$. Let us denote $d =\mathrm{codim}\,(K)$. The following are equivalent:\smallskip

$(1)$   $K$ is Cohen-Macaulay.\smallskip


$(2)$ $\mathcal{E}xt^i_{\mathcal{O}_S}(\mathcal{O}_{S_K},\mathcal{O}_S)=0$ for any $i\neq d$.\smallskip

Moreover, if $S$ is (relatively) Gorenstein, then these conditions are equivalent to:\smallskip

$(3)$ The $k$-scheme $S_K$ is (relatively) Cohen--Macaulay.\smallskip

More generally, for any finitely generated $k$-module $F\in\mathrm{Shv}_k(\mathbb{A}^n_{\mathtt{F}_1})$, 
the following conditions are equivalent:\smallskip

$(1)$ $F$ is Cohen--Macaulay (Definition \ref{CM-sheaf}).\smallskip

$(2)$ There exists a non negative integer $d_0$, such that $\mathcal{E}xt^i_{\mathcal{O}_S}(f^{\stella}F,\mathcal{O}_S)=0$ for any $i\neq d_0$ and $\mathcal{E}xt^{d_0}_{\mathcal{O}_S}(f^{\stella}F ,\mathcal{O}_S)$ is flat over $k$.\smallskip

$(3)$ $f^{\stella}\mathcal F$ is (relatively) Cohen--Macaulay (assuming that $S$ is relatively Gorenstein). \end{thm}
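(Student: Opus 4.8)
The plan is to reduce everything to the two structural pillars already established: Theorem \ref{1}, which characterizes Cohen--Macaulayness of $K$ intrinsically on $\A^n_{\FF_1}$ (via the vanishing $\HExt^i_{\A^n_{\FF_1}}(k_K,\omega_{\A^n_{\FF_1}})=0$ for $i\neq d$), and Theorem \ref{2}, which transports the dualizing data along $f^\stella$. First I would prove the equivalence $(1)\Leftrightarrow(2)$ in the sheaf-theoretic (more general) statement, since the ideal-sheaf statement is the special case $F=k_K$. The key step is to apply Theorem \ref{2} with $F\in D_\text{perf}(\A^n_{\FF_1})$, giving the isomorphism
\[\LL f^\stella \RR\HHom_{\A^n_{\FF_1}}^\pun(F,\omega_{\A^n_{\FF_1}})\overset\sim\longrightarrow \RR\HHom_S^\pun(\LL f^\stella F, f^\stella\omega_{\A^n_{\FF_1}}).\]
Because $f$ is flat, $f^\stella$ is exact, so $\LL f^\stella=f^\stella$ and, crucially, $f^\stella$ commutes with cohomology sheaves; thus the left-hand side has cohomology sheaves $f^\stella\,\HExt^i_{\A^n_{\FF_1}}(F,\omega_{\A^n_{\FF_1}})$. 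The right-hand side computes $\HExt^i$ of $f^\stella F$ against $f^\stella\omega_{\A^n_{\FF_1}}$, which (by the construction relating $\pi$ and the relatively Gorenstein hypothesis, or simply by comparing $f^\stella\omega_{\A^n_{\FF_1}}$ to $\OO_S$ up to a line bundle twist and shift) is $\HExt^i_{\OO_S}(f^\stella F,\OO_S)$ up to that twist.

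Next I would use faithful flatness to make the implication go both ways. The point is that $f^\stella$ applied to a finitely generated $k$-module sheaf vanishes only if the module vanishes, and moreover $f^\stella$ detects flatness over $k$: a finitely generated sheaf $G$ on $\A^n_{\FF_1}$ is $k$-flat iff $f^\stella G$ is $\OO_S$-flat (this is where the faithfully flat hypothesis is essential, and should follow from the exactness of $f^\stella$ together with the faithfulness condition $f^\stella\M=0\Rightarrow\M=0$). Granting this, the vanishing of $\HExt^i_{\OO_S}(f^\stella F,\OO_S)$ for $i\neq d_0$ is equivalent to the vanishing of $f^\stella\HExt^i_{\A^n_{\FF_1}}(F,\omega_{\A^n_{\FF_1}})$, which by faithfulness is equivalent to $\HExt^i_{\A^n_{\FF_1}}(F,\omega_{\A^n_{\FF_1}})=0$; and the $k$-flatness of the surviving $\HExt^{d_0}$ corresponds to the $k$-flatness of $\HExt^{d_0}_{\A^n_{\FF_1}}(F,\omega_{\A^n_{\FF_1}})$. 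By the definition of a Cohen--Macaulay sheaf (Definition \ref{CM-sheaf}) and Theorem \ref{1}(4) this is exactly condition $(1)$. For $F=k_K$ one has $d_0=d=\codim K$ by Theorem \ref{1}, recovering the ideal-sheaf equivalence.

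For the Gorenstein refinement giving condition $(3)$, I would unwind what ``(relatively) Gorenstein'' means for $S\to\Spec k$: the relative dualizing complex is a shifted line bundle, so $\RR\HHom^\pun_{\OO_S}(-,\OO_S)$ (twisted) computes relative duality. Then the relative Cohen--Macaulayness of $S_K$, respectively of $f^\stella F$, is by Grothendieck's definition exactly the concentration of $\RR\HHom^\pun$ into a single degree with a $k$-flat cohomology sheaf there — which is verbatim condition $(2)$ once the Gorenstein twist is absorbed. So $(2)\Leftrightarrow(3)$ is essentially the definition of relative Cohen--Macaulayness over the Gorenstein base, combined with the identification of $f^\stella\omega_{\A^n_{\FF_1}}$ with the relative dualizing sheaf pulled back along the local factorization $f=\pi\circ g$ with $g$ flat.

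The main obstacle I anticipate is the bookkeeping of the twist and degree shift relating $f^\stella\omega_{\A^n_{\FF_1}}$ to $\OO_S$ (or to the relative dualizing complex $\omega_{S/k}$), and in particular verifying that $f^\stella$ genuinely detects both vanishing and $k$-flatness of the finitely generated $\HExt$ sheaves. The vanishing detection is the faithfulness hypothesis applied degreewise, but the flatness detection is more delicate: I expect it to rest on the local factorization $f=\pi\circ g$ with $g\colon S\to\A^n_k$ flat (flatness of $g$ is guaranteed precisely because $f^\stella$ is exact), so that $k$-flatness upstairs can be checked after the faithfully flat base change $g$, reducing to the already-understood case of $\pi$ on $\A^n_k$. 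Handling this reduction carefully — ensuring it is compatible with the isomorphism of Theorem \ref{2} — is where the real work lies; the rest is assembling Theorem \ref{1} and Theorem \ref{2} together with standard properties of flat and faithfully flat functors.
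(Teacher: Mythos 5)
Your proposal follows essentially the same route as the paper's proof: the equivalences come from combining Theorem \ref{extens} (noting that $f^\stella\omega_{\A^n_{\FF_1}}=\Lc_\un$ is just an invertible sheaf, so no degree shift is involved), the exactness of $f^\stella$, faithful flatness to detect both vanishing and $k$-flatness of the $\HExt$ sheaves, Corollary \ref{CM=CM2} to translate back to the link-homology definition, and the standard criterion of Remark \ref{CM-schemes} for the relatively Gorenstein case $(3)$. The one place you deviate is the $k$-flatness detection: the paper does not pass through the local factorization $f=\pi\circ g$ (it explicitly remarks that this reduction is no simpler, and $g$ is only flat, not faithfully flat, so checking flatness after base change by $g$ is problematic), but instead gets both detection statements directly from the $k$-linearity, exactness and faithfulness of $f^\stella$ --- for instance, $f^\stella$ carries the sheaves $\mathcal{T}or^k_i(E,F)$ to $\mathcal{T}or^k_i(E,f^\stella F)$, so faithfulness transfers $k$-flatness back to $F_{\vert V}$.
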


It is worthy to mention that, for classic Stanley--Reisner correspondence, we also obtain the following result. If $K'\subseteq K$ is a subcomplex, then $K-K'$ is Cohen--Macaulay (in our Grothendieck's sense) if and only if the scheme $S_K- S_{K'}$ is (relatively) Cohen--Macaulay. Notice that $K-K'$ is no longer a simplicial complex, but there is a general notion of Cohen--Macaulayness on finite posets due to Baclawski (\cite{Ba}). This  notion, applied to $K-K'$, is stronger than ours and does not satisfy the above result. Hence, our sheaf-theoretic Cohen--Macaulayness notion fits better with Stanley--Reisner correspondence than Baclawski's. See Remark \ref{remark-opensimplicial}  for details and \cite{ST} for a comparison of these two notions of Cohen--Macaulayness on finite posets.\medskip

Regarding the canonical complex of a Stanley--Reisner ring we shall obtain the following result, as a consequence of Theorem \ref{2}. Let $K\subseteq \A^n_{\FF_1}$ and $S_K$ its associated Stanley--Reisner scheme. The functor $\pi^\stella$ induces a functor
\[ \pi_K^\stella\colon \Shv_k(K)\to \Qcoh(S_K)\] and we shall prove:

\begin{thm}[Thm. \ref{canonicalcomplex}]\label{7} $\pi_K^\stella\, (\omega_K^\pun)= \omega_{S_K}^\pun:= $restriction to $S_K$ of $\RR\HHom_{\A^n_k}^\pun(\OO_{S_K},\Omega^n_{\A^n_k/k})$, which is a (relative) dualizing complex of $S_K$.
\end{thm}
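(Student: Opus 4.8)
The plan is to deduce the identity directly from Theorem \ref{2} applied to the Stanley--Reisner morphism $\pi\colon\A^n_k\to\A^n_{\FF_1}$, and then to recognize the resulting complex as the image of the dualizing complex of $\A^n_k$ under the shriek functor of the closed immersion $S_K\hookrightarrow\A^n_k$.

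First I would assemble the computation over $\A^n_{\FF_1}$, before restricting to $K$. Since $\pi^\stella$ is exact (the affine Stanley--Reisner functor), we have $\LL\pi^\stella=\pi^\stella$, so Theorem \ref{2}, with $f=\pi$ and $F=k_K$, gives a canonical isomorphism
\[\pi^\stella\,\RR\HHom_{\A^n_{\FF_1}}^\pun(k_K,\omega_{\A^n_{\FF_1}})\;\overset\sim\longrightarrow\;\RR\HHom_{\A^n_k}^\pun(\pi^\stella k_K,\ \pi^\stella\omega_{\A^n_{\FF_1}}).\]
Here I would feed in the two identifications $\pi^\stella k_K=\OO_{S_K}$ (the defining property of $\pi^\stella$ recalled in the Introduction) and $\pi^\stella\omega_{\A^n_{\FF_1}}=\Omega^n_{\A^n_k/k}$. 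The latter is the one computation genuinely special to this morphism: it says that $\pi^\stella$ carries the canonical (dualizing) sheaf of $\A^n_{\FF_1}$ to the canonical sheaf of $\A^n_k$, and I would verify it from the explicit combinatorial description of $\omega_{\A^n_{\FF_1}}$ together with $\pi^\stella(k_{H_i})=\OO_{D_i}$, reducing to the coordinate hyperplanes. Granting this, the right-hand side is exactly $\RR\HHom_{\A^n_k}^\pun(\OO_{S_K},\Omega^n_{\A^n_k/k})$.

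Next I would pass from $\A^n_{\FF_1}$ to the closed subsets $K$ and $S_K$. By construction $\omega_K^\pun$ is the restriction of $\RR\HHom_{\A^n_{\FF_1}}^\pun(k_K,\omega_{\A^n_{\FF_1}})$ to $K$, and $\pi_K^\stella$ is induced by $\pi^\stella$; since $\pi^{-1}(K)=S_K$ topologically, restriction to the closed subsets commutes with the inverse-image functor, i.e. $\pi_K^\stella\big((-)_{\vert K}\big)=\big(\pi^\stella(-)\big)_{\vert S_K}$. Applying $\pi_K^\stella$ (again $\LL\pi_K^\stella=\pi_K^\stella$ by exactness) and inserting the displayed isomorphism yields
\[\pi_K^\stella(\omega_K^\pun)=\big(\pi^\stella\RR\HHom_{\A^n_{\FF_1}}^\pun(k_K,\omega_{\A^n_{\FF_1}})\big)_{\vert S_K}=\RR\HHom_{\A^n_k}^\pun(\OO_{S_K},\Omega^n_{\A^n_k/k})_{\vert S_K}=\omega_{S_K}^\pun,\]
which is the asserted equality. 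To see that $\omega_{S_K}^\pun$ is a relative dualizing complex of $S_K$ I would invoke Grothendieck duality: $\A^n_k$ is smooth over $k$ of relative dimension $n$, so (in the normalization used here) $\Omega^n_{\A^n_k/k}$ is a relative dualizing complex of $\A^n_k$, and for the closed immersion $j\colon S_K\hookrightarrow\A^n_k$ the functor $j^!=\RR\HHom_{\A^n_k}^\pun(\OO_{S_K},-)_{\vert S_K}$ sends a relative dualizing complex to one, by compatibility of $(-)^!$ with the composition $S_K\to\A^n_k\to\Spec k$. Hence $\omega_{S_K}^\pun=j^!\Omega^n_{\A^n_k/k}$ is a relative dualizing complex.

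I expect the main obstacle to sit entirely on the $\FF_1$-side and to be twofold. First, one must verify the hypothesis of Theorem \ref{2}, namely $k_K\in D_{\mathrm{perf}}(\A^n_{\FF_1})$; I would obtain this from a finite resolution of $k_K$ by standard (representable) sheaves, mirroring the fact that the Stanley--Reisner ideal $I_K$ has a finite free resolution, so that $\OO_{S_K}$ is perfect over $\A^n_k$. Second, and more delicate, is the compatibility $\pi^\stella\omega_{\A^n_{\FF_1}}=\Omega^n_{\A^n_k/k}$: it is precisely because $\pi^\stella$ lands on this honest canonical sheaf, rather than on some abstract twist, that the shriek computation produces the genuine relative dualizing complex of $S_K$. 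All the real content of the theorem is concentrated in this identification, the remainder being the formal bookkeeping of exact functors and of restriction to closed subsets.
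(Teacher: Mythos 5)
Your argument is correct and is essentially the paper's own proof: the paper likewise applies Theorem \ref{extens} to $F=k_K$ (exactness of $\pi^\stella$ making derivation unnecessary), feeds in $\pi^\stella k_K=\OO_{S_K}$ and $\pi^\stella\omega_{\A^n_{\FF_1}}=\Lc_{\un}\simeq\Omega^n_{\A^n_k/k}$, and handles the passage to $K$ and $S_K$ via the compatibility ${i_S}_*\pi_K^\stella=\pi^\stella i_*$ of \ref{abierto-cerrado}(b), which is the pushforward form of your restriction identity. The only cosmetic difference is that you verify pointwise perfectness of $k_K$ via a resolution, whereas it is immediate from the stalks being $k$ or $0$.
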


Notice that $\omega_K^\pun$ is explicitely computed by $$\omega_K^\pun=\RR\HHom_{\A^n_{\FF_1}}^\pun( k_K,\omega_{\A^n_{\FF_1}})_{\vert K}=\HHom_{\A^n_{\FF_1}}(\Kos_\pun(k_K),\omega_{\A^n_{\FF_1}})_{\vert K}$$  hence Theorem \ref{7} gives an explicit combinatorial description of the canonical complex of $S_K$, which improves that of \cite{Gr} and other results in the literature. A related result in terms of the geometric realization of $K$ is given in  \cite[Cor. 4.3]{Ya03}. Of course, if $K$ is Cohen--Macaulay, then $\pi_K^\stella\, (\omega_K)$ is the (relative) canonical sheaf of the Cohen--Macaulay scheme $S_K$.\medskip

We have also obtained some results for the projective version of Stanley--Reisner correspondence. Let us denote $\PP^n_{\FF_1}:=\A^{n+1}_{\FF_1}-\0$, which agrees with the underlying topological space of Dietmar's projective $\FF_1$-scheme (and, more importantly, the category of sheaves on $\PP^n_{\FF_1}$ behaves very similarly to the category of quasi-coherent sheaves on an ordinary projective scheme). Let $\PP^n_k=\Proj k[x_0,\dots,x_n]$ be the ordinary projective scheme over $k$. We shall construct a continuous map $\pi\colon \PP^n_k\to \PP^n_{\FF_1}$, an exact functor
\[\pi^\stella\colon \Shv_k(\PP^n_{\FF_1})\longrightarrow \Qcoh(\PP^n_k)\] and its right adjoint $\pi_\stella$. For each closed subset $K$ of $\A^{n+1}_{\FF_1}$, let us denote $K^*=K-\0$, which is a closed subset of $\PP^n_{\FF_1}$. Then one has:
\[ \pi^{-1}(K^*)=\PP(S_K) \,\,\, (\text{\rm topologically}),\quad \pi^\stella (k_{K^*})=\OO_{\PP(S_K)},\]  where $\PP(S_K)$ denotes the projectivization of the Stanley--Reisner scheme $S_K$ associated to $K$. All the results obtained for the classic Stanley--Reisner correspondence may be repeated for the projective version. We shall just mention some relevant ones. The first one is that 
\[\RR\pi_\stella (\omega_{\PP^n_k})=\omega_{\PP^n_{\FF_1}} \] where $\omega_{\PP^n_k}=\Omega^n_{\PP^n_k/k}$ is the (relative) canonical sheaf of $\PP^n_k$ and $\omega_{\PP^n_{\FF_1}}$ is the canonical sheaf of $\PP^n_{\FF_1}$ (it is the restriction to $\PP^n_{\FF_1}$ of the canonical sheaf of $\A^{n+1}_{\FF_1}$, but, also, it is the sheaf that dualizes cohomology). This result, together with basic duality theory, yields that $\pi^\stella$ preserves cohomology:
\[ H^i(\PP^n_k,\pi^\stella F)=H^i(\PP^n_{\FF_1}, F) \] for any $F\in D(\PP^n_{\FF_1})$. Finally, for each simplicial complex $K$, the complex $\omega_{K^*}^\pun:=(\omega_K^\pun)_{\vert K^*}$ is a canonical complex on $K^*$ that dualizes cohomology and $\pi_{K^*}^\stella\,(\omega_{K^*}^\pun)$ is the dualizing complex of the projective scheme $\PP(S_K)$, where $\pi_{K^*}^\stella\colon\Shv(K^*)\to\Qcoh(\PP(S_K)$ is the functor induced by $\pi^\stella$.
\medskip

\medskip

The paper is divided as follows.

Section \ref{prelims} is devoted to recall elementary properties about finite (and $T_0$) topological spaces (i.e., finite posets) and sheaves (of $k$-modules) on them and to fix the notations we shall use. We shall recall the standard resolutions of a sheaf and introduce the Koszul complex associated to a closed subset. We shall also recall elementary properties of the finite spaces $\A^n_{\FF_1}$ and $\PP^n_{\FF_1}$.

Section \ref{Yoneda} is devoted to the construction of the functor $f^\stella$ in a general form (Theorem \ref{extension}), i.e., for any finite topological space $X$ and any $k$-linear abelian category $\C$. It is just a slight modification of the Yoneda Extension Theorem (\cite{KS}).

In Section \ref{St-R-functor} we particularize the construction of section  \ref{Yoneda} to define the functor  $f^\stella$ associated to an algebraic morphism $f_D\colon S\to\A^n_{\FF_1}$ defined by a collection $D$ of $n$ effective Cartier divisors on $S$. We develop the most basic properties of $f^\stella$, its right adjoint $f_\stella$ and their derived versions. We make a study of  the flatness of $f_D$. The main result is Theorem \ref{thetheorem} which characterizes the flatness or fully flatness of $f_D$ in terms of geometric conditions on the divisors.

Section \ref{CM-section} is devoted to the sheaf-theoretic study - that mimics Grothendieck's ideas - of Cohen-Macaulayness on $\A^n_{\FF_1}$ as explained above.

In section \ref{Main results} we shall give the main results about Reisner's criterion and the structure of the canonical complex of a Stanley--Reisner ring that we have mentioned above.


Finally, throughout this paper, a scheme always means a noetherian scheme; hence, whenever we say a $k$-scheme, $k$ is also assumed to be noetherian.

\section{Preliminaries and notations}\label{prelims}

\subsection{Finite topological spaces}

\begin{cosa}\label{cosa1} {\rm Let $X$ be a finite $T_0$-topological space, i.e., a finite poset. For each $p\in X$ we shall denote
\[ \aligned U_p&=\text{ smallest open subset containing }p, \\ C_p&=\text{ closure of }p.\endaligned\]
The partial order $\leq$ on $X$ is given by:
\[ p\leq q\Leftrightarrow C_p\subseteq C_q \Leftrightarrow U_p\supseteq U_q\] and then $U_p=\{ q\in X: q\geq p\}$ and $C_p=\{ q\in X: q\leq p\}$.

$X$  can also be regarded as a category, where the objects are the points of $X$ and the morphisms are
\[\Hom_X(p,q)=\left\{\aligned &\text{one element, denoted by } 1_{pq}, \text{ if }p\leq q\\ &\emptyset,\text{ otherwise}.\endaligned  \right.\]
The minimal elements of $X$ (with respect to the partial order) are precisely the closed points of $X$. The maximal elements of $X$ are the open points of $X$ and they will be called {\em generic points} of $X$, since their closures are the irreducible components of $X$.}
\end{cosa}

\begin{cosa} {\rm We shall denote by $\Shv_k(X)$ the category of sheaves of $k$-modules on $X$. It is equivalent to the category of functors from $X$ to the category of $k$-modules. We shall follow the notations of \cite{Sanchoetal}.

The constant sheaf $k$ on $X$ is still denoted by $k$. 

For any locally closed subset $S$ of $X$ and any sheaf $F$ on $X$, we denote by $F_S$ the sheaf $F$ supported on $S$, i.e., it is the unique sheaf on $X$ such that ${F_S}_{\vert S}=F_{\vert S}$ and ${F_S}_{\vert X-S}=0$. To avoid confusions, for each $p\in X$, $F_p$ denotes the stalk of $F$ at $p$ and $F_{\{p\}}$ denotes the sheaf $F$ supported on $p$. For any closed subset $K$ of $X$ one has an exact sequence
\[ 0\to F_{X-K}\to F\to F_K\to 0.\]

For any $k$-module $E$ and any sheaf $F$ on $X$, we denote by $E\otimes_kF$  the sheaf given by $(E\otimes_kF)_p=E\otimes_k F_p$; in other words $E\otimes_kF$ is the tensor product of the constant sheaf $E$ and $F$.

For any $p\geq q$ one has an inclusion $i_{pq}\colon k_{U_p}\hookrightarrow k_{U_q}$ and an epimorphism 
$\overline i_{pq}\colon k_{C_p}\to k_{C_q}$; for any $p,q\in X$ one has
\begin{equation}\label{i_pq} \Hom_X(k_{U_p},k_{U_q})=\left\{ \aligned k\cdot &i_{pq},\text{ if }p\geq q\\ \,\, &0, \text{ otherwise} \endaligned  \right.,\qquad \Hom_X(k_{C_p},k_{C_q})=\left\{ \aligned k\cdot &\overline i_{pq},\text{ if }p\geq q\\ \,\, &0, \text{ otherwise} \endaligned  \right.
\end{equation} because $\Hom_X(k_{U_p},F)=F_p$ and $\Hom_X(F,k_{C_p})=\Hom_k(F_p,k)$ for any sheaf $F$.

We say that a sheaf $F$ is {\em finitely generated} if $F_p$ is a finitely generated $k$-module for any $p\in X$ and we shall denote by $\Shv_{k-{\rm fg}}(X)$ the category of finitely generated sheaves on $X$. We say that $F$ is {\em flat} if $F\otimes_k \underline{\quad}\,$ is exact; equivalently, if $F_p$ is a flat $k$-module for any $p\in X$.}
\end{cosa}

\begin{cosa}{\rm {\em Standard resolutions}. For any sheaf $F$ on $X$ one has the standard resolution
\[ 0\to F\to C^0F\to C^1F\to\cdots\to C^n F\to 0,\qquad (n=\dim X)\] where $$C^iF=\prod_{p_0<\dots <p_i} F_{p_i}\otimes_k k_{C_{p_0}}$$ and the standard resolution
\[ 0\to C_n F\to\cdots\to C_1F\to C_0F\to F\to 0\] where 
\[ C_iF=\bigoplus_{p_0<\dots <p_i} F_{p_0}\otimes _k k_{U_{p_i}}.\]}

\begin{prop}\label{augmented-complex} Let $E$ be a $k$-module, $p\in X$ and $F=E\otimes_k k_{U_p}$. The augmented complex $0\to C_n F\to\cdots\to C_1F\to C_0F\to F\to 0$ is homotopically trivial. 

\begin{proof} If $F=k_{U_p}$, then $F$ is  projective and $C_iF=\bigoplus_{p\leq p_0<\cdots< p_i}k_{U_{p_i}}$ is also projective, so the result follows. Since $C_\pun(E\otimes_k k_{U_p})=E\otimes_kC_\pun k_{U_p}$, we conclude.
\end{proof}
%

\end{prop}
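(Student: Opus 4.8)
The plan is to reduce to the case $E=k$ and then conclude by projectivity. First I would record the functorial identity
\[ C_\pun(E\otimes_k k_{U_p})=E\otimes_k C_\pun(k_{U_p}),\]
which holds because the stalk $(E\otimes_k k_{U_p})_{p_0}$ is $E$ when $p_0\geq p$ and $0$ otherwise: each summand $F_{p_0}\otimes_k k_{U_{p_i}}$ of $C_iF$ thus becomes $E\otimes_k k_{U_{p_i}}$, with the index running over the chains $p\leq p_0<\dots<p_i$. Since $E\otimes_k\underline{\quad}$ is an additive functor, it preserves chain homotopies: if $s$ is a contracting homotopy of the augmented complex of $k_{U_p}$, i.e. $ds+sd=\id$, then $E\otimes_k s$ is one for the augmented complex of $E\otimes_k k_{U_p}$. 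Hence it suffices to treat $F=k_{U_p}$.

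For $F=k_{U_p}$ the augmented complex is, by construction, a resolution, hence exact, and every term is a projective object of $\Shv_k(X)$. Indeed, the identity $\Hom_X(k_{U_q},G)=G_q$ shows that $\Hom_X(k_{U_q},\underline{\quad})$ is exact (stalks are exact), so each $k_{U_q}$ is projective; the terms $F=k_{U_p}$ and $C_iF=\bigoplus_{p\leq p_0<\dots<p_i}k_{U_{p_i}}$ are finite direct sums of such objects, hence projective as well. A bounded exact complex of projective objects is split exact, so it admits a contracting homotopy, which is exactly the assertion.

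The only real content beyond this bookkeeping is the last invoked fact, and that is where I expect the (routine) work to lie. Concretely one builds the homotopy from the augmentation end: exactness at $F$ gives an epimorphism $C_0F\to F$ which splits because $F$ is projective, and climbing the complex each successive short exact sequence of kernels splits because the relevant images are projective, producing maps $s$ with $ds+sd=\id$. A more conceptual alternative, which I would relegate to a remark, is to write the contraction down explicitly: $U_p$ has a minimum element $p$, so the order complex indexing the $C_iF$ is a cone with apex $p$, and the homotopy is the cone map that prepends $p$ to each chain $p\leq p_0<\dots<p_i$. This route avoids projectivity altogether, but it demands care with the signs of the differential and with the degenerate chains already beginning at $p$; for that reason I would keep the projectivity argument as the main line.
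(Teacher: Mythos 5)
Your proposal is correct and follows essentially the same route as the paper: treat $F=k_{U_p}$ first by observing that the augmented complex is an exact, bounded complex of projective objects (each $k_{U_q}$ is projective since $\Hom_X(k_{U_q},G)=G_q$), hence split, and then pass to general $E$ via $C_\pun(E\otimes_k k_{U_p})=E\otimes_k C_\pun(k_{U_p})$ and the fact that additive functors preserve homotopies. You have merely spelled out the steps the paper leaves implicit.
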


\end{cosa}

\begin{cosa} {\rm  {\em Koszul complexes}. Let $K$ be a closed subset of $X$ and $p_1,\dots, p_r$ the closed points of $X-K$, so $X-K=U_{p_1}\cup\dots\cup U_{p_r}$. Let us denote $L=k_{U_{p_1}}\oplus \cdots\oplus k_{U_{p_r}}$ and $\omega_K\colon L\to k$ the natural morphism, with image $k_{X-K}$. The {\em Koszul complex of $K$}, $\Kos_\pun(k_K)$, is defined by: \[\Kos_i(k_K) :={\bigwedge}^i_k L\] and the differential is the inner contraction with $\omega_K$. In other words, it is the tensor product complex
\[\Kos_\pun(k_K)=(k_{U_{p_1}}\to k)\otimes\cdots \otimes (k_{U_{p_r}}\to k)\] where each $k_{U_{p_i}}\to k$ has homological degrees $1,0$. Hence, $\Kos_\pun(k_K)$  is a (left) resolution of $k_K$.  }

\end{cosa}

\begin{cosa}{\rm  {\em Homology and cohomology}. We shall denote by $D_k(X)$ (or $D(X)$, if $k$ is understood) the derived category of complexes of sheaves of $k$-modules on $X$. For any $F\in D(X)$, we shall denote by $\RR\Gamma(X,F)$ the right derived functor  of sections of $F$; analogously, we shall denote by $\LL(X,F)$ the left derived functor of cosections of $F$ (see \cite{Sanchoetal}), and
\[ H^i(X,F)=H^i[\RR\Gamma(X,F)],\qquad H_i(X,F)=H_i[\LL(X,F)]\] the cohomology and homology $k$-modules of $F$. These may be computed with the standard resolutions, i.e.,
\[ \RR\Gamma(X,F)\simeq \Gamma(X,C^\pun F),\quad  \LL(X,F)\simeq \L(X,C_\pun F).\]
Homology and cohomology of the constant sheaf $k$ are mutually dual: $$\RR\Gamma(X,k)=\RR\Hom^\pun_k(\LL(X,k),k) 
\text{ and }\LL(X,k)=\RR\Hom^\pun_k(\RR\Gamma(X,k),k).$$ 

We shall denote   $\LL_\text{\rm red}(X,k):=\operatorname{Cone}(\phi)[-1]$, where $\phi$ is the natural morphism of complexes $\phi\colon \LL(X,k)\to k$. Then
\[ \widetilde H_i(X,k):=H_i[\LL_\text{\rm red}(X,k)]\] are the reduced homology $k$-modules of $X$. Notice that, if $X$ is empty, then $\widetilde H_{-1}(X,k)=k$ and $\widetilde H_i(X,k)=0$ for any $i\neq -1$.

The global dualizing complex of $X$ will be denoted by $D_X$; it is the object $D_X\in D(X)$ satisfying
\[ \RR\Hom_X^\pun(F,k)=\RR\Hom^\pun_k(\RR\Gamma(X,F),k)\] for any $F\in D(X)$.

Finally, we shall denote by $D_c(X)$ the full subcategory of $D(X)$ consisting of those complexes   $F\in D(X)$ such that $H^i(F)$ is finitely generated for any $i\in\ZZ$.}
\end{cosa}

\subsection{Affine and projective finite spaces}$\,$

\begin{cosa}{\rm  Let $\Delta_{n}=\{ 1,\dots,n\}$ be the finite set with $n$ elements. 

\begin{defn}{\rm We shall denote $$\A^n_{\FF_1}:=\mathcal{P}(\Delta_{n})$$ the set of subsets of $\Delta_{n}$, which is a finite topological space (a finite poset) with the topology given by the partial order defined by inclusion: $p\leq q\Leftrightarrow p\subseteq q$. }
\end{defn} We shall denote:

\begin{enumerate}

\item[$\bullet$] $\0\in\A^{n}_{\FF_1}$ the element representing the empty subset of $\Delta_{n}$. It is the unique closed point of $\A^{n}_{\FF_1}$.

\item[$\bullet$] $\un\in\A^{n}_{\FF_1}$ the generic point of $\A^n_{\FF_1}$, i.e., the element representing the total subset of $\Delta_{n}$. It is the unique open point of $\A^{n}_{\FF_1}$.

\item[$\bullet$] For each $i\in\Delta_{n}$, we shall denote by $U_i$ the open subset of $\A^{n}_{\FF_1}$ whose elements are the subsets of $\Delta_{n}$ containing $i$; in other words $U_i=U_{\{i\}}$ (notation of \ref{cosa1}). 

\item[$\bullet$] We shall denote   $H_i:=\A^{n}_{\FF_1}- U_i$ the hyperplane of $\A^{n}_{\FF_1}$ whose elements are the subsets of $\Delta_{n}$ contained in $\Delta_{n}-\{ i\}$. 

\item[$\bullet$] For any   $p\in \A^{n}_{\FF_1}$, $\widehat p$ denotes the complement subset: $\widehat p=\Delta_n - p$. 
\end{enumerate}

The union and intersection of subsets of $\Delta_n$ define continuous maps
\[ \aligned \A^n_{\FF_1}\times\A^n_{\FF_1} &\overset +\longrightarrow \A^n_{\FF_1}\\ (p,q)&\mapsto p+q:=p\cup q\endaligned,\qquad \aligned \A^n_{\FF_1}\times\A^n_{\FF_1} &\overset \cdot\longrightarrow \A^n_{\FF_1}\\ (p,q)&\mapsto p\cdot q:=p\cap q\endaligned.\] For any $p,q\in\A^n_{\FF_1}$ one has
\[ U_p\cap U_q=U_{p+q}\quad,\quad C_p\cap C_q = C_{p\cdot q}.\]

\begin{defn}{\rm  We shall denote  $$\PP^n_{\FF_1} :=\A^{n+1}_{\FF_1} \negmedspace -\negmedspace \0$$ the set of non-empty subsets of $\Delta_{n+1}=\{ 0,1,\dots,n\}$, which is an open subset of $\A^{n+1}_{\FF_1}$. }\end{defn}
Notice that $U_0,\dots, U_n$ is an open covering of $\PP^n_{\FF_1}$. 
When working on $\PP^n_{\FF_1}$, we shall still denote by $H_i$ the hyperplane of $\PP^n_{\FF_1}$ which is the complement    of $U_i$; that is, it is the restriction of the hyperplane $H_i$ of $\A^{n+1}_{\FF_1}$ to $\PP^n_{\FF_1}$. On $\PP^n_{\FF_1}$, $H_i$ is homeomorphic to $\PP^{n-1}_{\FF_1}$.}
\end{cosa}

\begin{cosa}{\rm The universal properties of $\A^n_{\FF_1}$ and $\PP^n_{\FF_1}$ are the following. Let $T$ be any topological space. Giving a continuous map $f\colon T\to \A^{n}_{\FF_1}$ is equivalent to giving $n$ closed subsets $C_1,\dots, C_n$ of $T$; explicitly, the map
\begin{equation}\label{univpropAn} \aligned \Hom_{\rm cont}(T,\A^{n}_{\FF_1})&\longrightarrow \{ (C_1,\dots ,C_n): C_i \text{ a closed subset of } T\}
\\ f &\longmapsto (f^{-1}(H_1),\dots , f^{-1}(H_n))
\endaligned\end{equation}
is bijective, and the inverse sends a collection $(C_1,\dots ,C_n)$ of closed subsets of $T$ to the continuous map $f\colon T\to  \A^{n}_{\FF_1}$ defined by
\[ f(t)=\{ i\in\Delta_{n}: t\notin C_i\}.\]

The bijection \eqref{univpropAn} induces the following one
\[\aligned \Hom_{\rm cont}(T,\PP^{n}_{\FF_1} )&\longrightarrow \{ (C_0,\dots ,C_n): C_0\cap\dots \cap C_n=\emptyset \}
\\ f &\longmapsto (f^{-1}(H_0),\dots , f^{-1}(H_n))
\endaligned\]}\end{cosa}

\begin{defn}{\rm We shall denote $\HH^n:=\PP^{n+1}_{\FF_1}\negmedspace -\negmedspace \un=\A^{n+2}_{\FF_1}-\{\0,\un\}$ the set of non-trivial subsets of $\Delta_{n+2}$. It is the closed subset of $\PP^{n+1}_{\FF_1}$ consisting in the union of its hyperplanes.}
\end{defn}

\begin{cosa}\label{basiccohomologies}{\rm The cohomology of these spaces for the constant sheaf is:
\[ H^\pun(\A^n_{\FF_1},k)=k,\quad H^\pun(\PP^n_{\FF_1},k)=k, \quad H^\pun(\HH^n,k)=k\oplus k[-n].\] 
%
and their global dualizing complexes  are:
\[ D_{\A^n_{\FF_1}}=k_{\{\0\}},\quad D_{\PP^n_{\FF_1}}=k_{\{\un\}}[n],\quad D_{\HH^n}=k[n].\] }
\end{cosa}

For a longer justification of the names (and notations) {\em affine} and {\em projective} given to the spaces $\A^n_{\FF_1}$ and $\PP^n_{\FF_1}$ see \cite{SS}.

\section{Yoneda Extension}\label{Yoneda}

Let $X$ be a finite topological space and $X^{\text{\rm op}}$ its dual. Let us consider the functors 
\[  \iota\colon  X^{\text{\rm op}}\to\Shv_k(X),\qquad
\overline{\iota}\colon  X^{\text{\rm op}}\to\Shv_k(X) \] defined by (see \eqref{i_pq})
\[\aligned  \iota(p)&=k_{U_p}, \quad \iota(1_{pq}) =i_{pq}\colon k_{U_p}\hookrightarrow k_{U_q}\\
\overline{\iota}(p)&=k_{C_p}, \quad \overline{\iota}(1_{pq}) =\overline{i}_{pq}\colon k_{C_p}\to k_{C_q}\endaligned, \quad \text{ for any }p\geq q.\]

The category $\Shv_k(X)$ is an example of what we shall call a $k$-linear category:

\begin{defn}{\rm By a {\em $k$-linear category} we mean an abelian category $\C$ such that:

(a) $\Hom_\C(A,B)$ is a $k$-module for any objects $A,B\in\C$, and the composition
\[ \Hom_\C(A,B)\times \Hom_\C(B,C)\to\Hom_\C(A,C)\] is bilinear, for any $A,B,C$.

(b) For any $k$-module $E$ and any object $A$ of $\C$, there is an object $E\otimes_k A$ satisfying: for any $B\in\C$ one has a $k$-linear isomorphism
\[ \Hom_\C(E\otimes_k A,B)\simeq \Hom_k(E,\Hom_\C(A,B))\] which is functorial on $B$.

A $k$-linear functor $F\colon \C\to\C'$ between $k$-linear categories is an additive functor such that $\Hom_\C(A,V)\to \Hom_{\C'}(F(A),F(B)$ is $k$-linear and such that  the natural morphism 
\[ E\otimes_k F(A)\to F(E\otimes_k A)\] is an isomorphism.

An object $A\in\C$ is called {\em $k$-flat} if the functor $\underbar{\quad}\otimes_k A\colon \C_{k-\text{\rm mod}}\to\C$ is exact.}
\end{defn}

\begin{rems}{\em (1) If $\C$ is cocomplete, then condition (b) is unnecessary, it is always satisfied (assuming (a)).

(2) If we replace (b) by the same condition but only for finitely generated $k$-modules $E$, then we say that $\C$ is a  (fg)-$k$-linear category. If $k$ is noetherian, then this is automatically satisfied (assuming (a)).}
\end{rems}

\begin{ex} {\rm Let $S$ be a scheme over $k$ and $\pi\colon S\to\Spec k$ the structure morphism. The category $\Qcoh(S)$ of quasi-coherent $\OO_S$-modules is $k$-linear by defining $$E\otimes_k\M:=\pi^*E\otimes_{\OO_S}\M,$$  for any $k$-module $E$ and any quasi-coherent module $\M$. Analogously, the category $\Coh(S)$ of coherent $\OO_S$-modules is a (fg)-$k$-linear category (assuming $k$  noetherian). A quasi-coherent module $\M$ is $k$-flat if and only if it is flat over $k$ in the usual sense.}
\end{ex}

The following theorem shows that $\Shv_k(X)$ is ``the $k$-linear envelope'' of $X^{\text{\rm op}}$. It is a slight modification of the well known Yoneda Extension Theorem \cite[Thm. 2.3.3, Prop. 2.7.1]{KS}.

\begin{thm}\label{extension} Let $\C$ be a $k$-linear category and let \[\Phi_X\colon  X^{\text{\rm op}}\to\C\] be a functor (i.e., a $\C$-valued sheaf on $X^{\text{\rm op}}$). There exists a  unique (upto a unique natural isomorphism) $k$-linear  right exact (resp. left exact) functor
\[  \Phi\colon  \Shv_k(X)\to\C \qquad (\text{\rm resp. } \overline\Phi\colon  \Shv_k(X)\to\C )\]  such that  $\Phi\circ\iota=\Phi_X$ (resp., such that  $\overline\Phi\circ\overline\iota=\Phi_X$). This yields equivalences
\[ \aligned \left\{\aligned & \text{Right exact } k\text{-linear}\\ &\text{functors } \Shv_k(X)\to\C \endaligned  \right\}
\simeq  
&\left\{  \C\text{-valued  sheaves on } X^{\text{\rm op}}\right\} 
\simeq \left\{\aligned & \text{Left exact } k\text{-linear}\\ &\text{functors } \Shv_k(X)\to\C \endaligned  \right\} \\ \Phi\quad &\longleftarrow  \quad\quad \quad\Phi_X\quad\quad\quad\longrightarrow\quad\overline\Phi
\\ \Phi\quad &\longrightarrow    \quad \Phi\circ\iota,\quad \overline\Phi\circ\overline\iota  \quad\longleftarrow\quad\overline\Phi
\endaligned
\]
\end{thm}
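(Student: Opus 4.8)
The plan is to prove the right exact case $\Phi$ in full and obtain the left exact case $\overline\Phi$ by the dual construction (with $k_{C_p}$, the co-standard resolution $C^\pun$, and the dual of Proposition \ref{augmented-complex} in place of $k_{U_p}$, $C_\pun$). The whole argument rests on the fact that the sheaves $k_{U_p}$ form a generating family of \emph{projective} objects: $\Hom_X(k_{U_p},-)=(-)_p$ is exact, and the augmentation $C_0F=\bigoplus_p F_p\otimes_k k_{U_p}\to F$ is an epimorphism, so every $F$ sits in the exact presentation $C_1F\to C_0F\to F\to 0$ cut out by the standard resolution. Since any right exact $k$-linear $\Phi$ with $\Phi\circ\iota=\Phi_X$ must send $E\otimes_k k_{U_p}$ to $E\otimes_k\Phi_X(p)$ and must commute with finite direct sums and with cokernels, applying it to this presentation \emph{forces}
\[\Phi(F)\simeq\Coker\bigl(\Phi(C_1F)\to\Phi(C_0F)\bigr),\qquad \Phi(C_iF)=\bigoplus_{p_0<\cdots<p_i} F_{p_0}\otimes_k\Phi_X(p_i).\]
This single observation gives uniqueness up to a unique natural isomorphism (the comparison map is pinned down on the generators $k_{U_p}$, where it must agree with $\mathrm{id}_{\Phi_X}$).

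For existence I would first extend $\Phi_X$ to the full $k$-linear subcategory $\mathcal P\subseteq\Shv_k(X)$ of finite direct sums of objects $E\otimes_k k_{U_p}$; here the extension is unforced and elementary, because by \eqref{i_pq} every morphism between such sums is a matrix whose entries are scalar multiples of the structure maps $i_{pq}$, which $\Phi_X$ carries to the matrix of $\Phi_X(1_{pq})$. Then I would \emph{define} $\Phi(F)$ by the displayed cokernel (note $C_0F,C_1F\in\mathcal P$, so the right-hand side is already meaningful); functoriality in $F$ follows from functoriality of the standard resolution, and $k$-linearity and additivity are immediate from $C_\pun(E\otimes_k F)=E\otimes_k C_\pun F$. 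The one genuine point is the compatibility $\Phi\circ\iota\simeq\Phi_X$, i.e.\ that the formula returns exactly $\Phi_X(p)$ on $F=k_{U_p}$. This is precisely where Proposition \ref{augmented-complex} enters: the augmented complex $C_\pun(E\otimes_k k_{U_p})\to E\otimes_k k_{U_p}\to 0$ is homotopically trivial, and its contracting homotopy consists of morphisms of $\mathcal P$; applying the functor $\Phi$ on $\mathcal P$ termwise preserves the homotopy, so $\Phi(C_\pun k_{U_p})$ is a contractible augmented complex over $\Phi_X(p)$, whence $\Coker(\Phi(C_1k_{U_p})\to\Phi(C_0k_{U_p}))=\Phi_X(p)$. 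Thus the cokernel definition is consistent with the $\mathcal P$-values and restricts to $\Phi_X$ along $\iota$.

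It then remains to establish right exactness and to upgrade the bijection on objects to the stated equivalences of categories. For right exactness the cleanest route is to exhibit an explicit right adjoint $\Psi\colon\C\to\Shv_k(X)$ by $\Psi(C)_p:=\Hom_\C(\Phi_X(p),C)$, with restriction maps for $q\leq p$ induced by $\Phi_X(1_{pq})\colon\Phi_X(p)\to\Phi_X(q)$; the variance works out so that $\Psi(C)$ is a genuine sheaf (a functor $X\to k\modu$). Applying $\Hom_\C(-,C)$ to the presentation and using the defining isomorphism $\Hom_\C(F_{p_0}\otimes_k\Phi_X(p_i),C)=\Hom_k(F_{p_0},\Psi(C)_{p_i})$ of the $k$-linear structure yields a natural isomorphism $\Hom_\C(\Phi F,C)\simeq\Hom_{\Shv_k(X)}(F,\Psi C)$, so $\Phi$ is a left adjoint and hence right exact (this $\Psi$ is also the abstract prototype of the functor $f_\stella$ used later). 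Finally, uniqueness makes $\Phi_X\mapsto\Phi$ and $\Phi\mapsto\Phi\circ\iota$ mutually quasi-inverse on objects; on morphisms, a natural transformation $\Phi_X\to\Phi_X'$ supplies compatible maps $\Phi_X(p)\to\Phi_X'(p)$ which extend uniquely across presentations to a natural transformation $\Phi\to\Phi'$, giving the asserted equivalences (and the dual ones for $\overline\Phi$). I expect the only real obstacle to be the compatibility $\Phi\circ\iota\simeq\Phi_X$: one must check that the contracting homotopy furnished by Proposition \ref{augmented-complex} is assembled from the structure maps $i_{pq}$ alone, so that it survives application of $\Phi$; every other step is a formal consequence of $k$-linearity and of the projectivity of the $k_{U_p}$.
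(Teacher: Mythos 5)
Your construction coincides with the paper's: the same presentation $C_1F\to C_0F\to F\to 0$, the same cokernel formula for $\Phi(F)$, the same appeal to the homotopical triviality of the augmented complex $C_\pun k_{U_p}\to k_{U_p}$ (Proposition \ref{augmented-complex}) to verify $\Phi\circ\iota=\Phi_X$ (your worry there is resolved exactly as you suspect, since by \eqref{i_pq} every morphism between direct sums of the $k_{U_q}$ is a matrix built from the $i_{pq}$, so the homotopy lies in the subcategory where $\Phi$ is already defined), and the same uniqueness argument. The only cosmetic difference is that you obtain right exactness by exhibiting the explicit right adjoint $\Psi(C)_p=\Hom_\C(\Phi_X(p),C)$ --- which is indeed the abstract prototype of $f_\stella$ --- whereas the paper simply notes that $\Phi$ is the cokernel of a natural transformation between the right exact functors $C_1\Phi\to C_0\Phi$; both arguments are correct.
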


\begin{proof} Let $F$ be a sheaf on $X$, with restriction morphisms $r_{xy}\colon F_x\to F_y$ for each $x\leq y$. Let us consider the exact sequence
\[ C_1F\overset{\di}\to C_0F\to F\to 0 \] where:
\[\aligned C_0F:=&\bigoplus_{x\in X} F_x\otimes_k k_{U_x}\\ C_1F:= &\bigoplus_{x<y} F_x\otimes_k k_{U_y}
\endaligned\] and    $\di \colon C_1F\to C_0F$ is the morphism induced by the natural morphism
\[ F_x\otimes_k k_{U_y}\overset{(r_{xy}\otimes 1, -1\otimes i_{yx})}\longrightarrow F_y\otimes_k k_{U_y}\oplus  F_x\otimes_k k_{U_x}.\] 


 Let us define the functors $C_0\Phi, C_1\Phi\colon\Shv_k(X)\to\C$ by
\[ (C_0\Phi)(F):= \bigoplus_{x\in X} F_x\otimes_k \Phi_X(x) , \quad (C_1\Phi)(F):= \bigoplus_{x<y} F_x\otimes_k \Phi_X(y)\] and let $\Phi(\di)\colon (C_0\Phi)(F)\to (C_1\Phi)(F)$ be the morphism induced by the morphisms
\[ F_x\otimes_k \Phi_X(y)\overset{(r_{xy}\otimes 1, -1\otimes \Phi_X(1_{yx}))}\longrightarrow F_y\otimes_k \Phi_X (y)\oplus  F_x\otimes_k \Phi_X(x).\] 
Let us define $\Phi(F):=\Coker(\Phi(\di))$. Since $C_0\Phi$ and $C_1\Phi$ are $k$-linear and right exact, so is $\Phi$. Let us see that $\Phi\circ\iota=\Phi_X$, i.e. $\Phi(k_{U_p})=\Phi_X(p)$. For each $p\in X$, one has
\[ (C_0\Phi)(k_{U_p})=\bigoplus_{x\geq p}\Phi_X(x),\qquad (C_1\Phi)(k_{U_p})=\bigoplus_{y>x\geq p}\Phi_X(y).\] The natural projection $\bigoplus_{x\geq p}\Phi_X(x)\to \Phi_X(p)$ gives a morphism $(C_0\Phi)(k_{U_p})\to \Phi_X(p)$ and we leave the reader to check that the sequence
\[ \bigoplus_{y>x\geq p}\Phi_X(y)\overset{\Phi(d)}\longrightarrow \bigoplus_{x\geq p}\Phi_X(x)\to \Phi_X(p)\to 0\] is exact (in fact, it is homotopically trivial, whose homotopy operator is just ``applying $\Phi$'' to the homotopy operator of the augmented complex $C_\pun k_{U_p}\to k_{U_p}$). This says that $\Phi(k_{U_p})=\Phi_X(p)$ (here, and in the sequel, equality means a natural isomorphism).

Finally, let us see the unicity of $\Phi$. Let $\Psi\colon \Shv_k(X)\to\C$ be a $k$-linear right exact functor such that $\Psi\circ\iota=\Phi_X$; then $\Psi(C_0F)=(C_0\Phi)(F)$,  $\Psi(C_1F)=(C_1\Phi)(F)$ and $\Psi(d)=\Phi(d)$ because $\Psi$ is $k$-linear and $\Psi\circ\iota=\Phi_X$. We conclude that $\Psi(F)=\Phi(F)$ because $\Psi$ is right exact.

The construction of $\overline\Phi$ is completely analogous, starting from the exact sequence $0\to F\to C^0F\to C^1F$, where $C^0F=\prod_{x\in X} F_x\otimes_k k_{C_x}$, $C^1F:=  \prod_{x<y} F_y\otimes_k k_{C_x}$.
\end{proof}

\begin{rem}{\rm If $\C$ is a (fg)-$k$-linear category, the same proof shows that any func\-tor $\Phi_X\colon X^{\text{op}}\to\C$ extends in a unique way to a right-exact $k$-linear functor $\Phi\colon \Shv_{k-\text{\rm fg}}(X)\to\C$, where $\Shv_{k-\text{fg}}(X)$ denotes the category of sheaves of finitely generated $k$-modules on $X$.}
\end{rem}

\section{Algebraic morphisms}\label{St-R-functor} In this section we shall apply the results of the previous section to  the finite topological space $\A^n_{\FF_1}$ and the abelian category $\C=\Qcoh(S)$ for some scheme $S$ over $k$.

Recall that on $\A^n_{\FF_1}$ one has $$U_\0=\A^n_{\FF_1},\qquad U_p=\underset{i\in p}\bigcap\, U_i, \text{ for any }p\neq \0 $$ and

$$C_\un=\A^n_{\FF_1},\qquad C_p=\underset{i\notin p}\bigcap\, H_i, \text{ for any }p\neq \un. $$

Hence
\[ k=k_{U_\0},\quad k_{U_p}=\underset{i\in p}{\bigotimes }\, k_{U_i},\quad k_{C_p}=\underset{i\notin p}{\bigotimes }\, k_{H_i} .\]

\begin{cosa}{\rm Let $S$ be a scheme, $\Lc_1,\dots,\Lc_n$ quasi-coherent modules on $S$ and, for each $i=1,\dots ,n$, let $s_i\colon \Lc_i\to\OO_S$ be a morphism of $\OO_S$-modules. For brevity, we shall denote $(\Lc,s)$ these data.

\begin{defn}{\rm Let us denote $\I_i$ the image of $s_i$, which is a quasi-coherent ideal, and $D_i$ the closed subscheme defined by $\I_i$. We have then a continuous map
 \[ f \colon S\longrightarrow\A^n_{\FF_1}\] uniquely determined by $f ^{-1}(H_i)=D_i$ (see \eqref{univpropAn})}.
 \end{defn}
 
\begin{defn}{\rm For each $p\in\A^n_{\FF_1}$, let us denote
 \[\Lc_\0=\OO_S,\quad \Lc_p=\underset{i\in p}\otimes \Lc_i, \text{ for }p\neq\0\] and $s_p:=\underset{i\in p}\otimes s_i\colon \Lc_p\to\OO_S$ ($s_p=\id$ if $p=\0$). For each $p\leq q$ one has $\Lc_q=\Lc_p\otimes_{\OO_S}\Lc_{q-p}$ and then a morphism
 \[ 1\otimes s_{q-p}\colon \Lc_q\to \Lc_p.\] We have then a functor
 \[ \aligned   (\A^n_{\FF_1})^{\rm op}&\longrightarrow\Qcoh(S)\\ p &\longmapsto \Lc_p \\ 1_{pq}&\longmapsto 1\otimes s_{q-p}\endaligned\]
 whose $k$-linear right-exact extension will be denoted by
 \[ f^\stella\colon \Shv_k(\A^n_{\FF_1})\longrightarrow \Qcoh(S).\]}
 \end{defn}
 
\begin{defn}{\rm For each $p\in\A^n_{\FF_1}$, let us denote by $\I_p$ the image of $s_p\colon \Lc_p\to\OO_S$. If $p=\{ i_1,\dots,i_r\}$, then
 \[\I_p=\I_{i_1}\cdot \cdots\cdot\I_{i_r}.\]
Now, for any closed subset $K$ of $\A^n_{\FF_1}$, let us denote $$\I_K:=\underset{p\notin K}\sum \I_p$$ and let $S_K$ be the closed subscheme defined by $\I_K$. We shall denote $\OO_{S_K}:=\OO_S/\I_K$.}\end{defn} }
\end{cosa}

\begin{prop} One has:\begin{enumerate}\item  $f^{-1}(K)=S_K$ (topologically).
\item $f^\stella(k_K)=\OO_{S_K}$.
\end{enumerate}
\end{prop}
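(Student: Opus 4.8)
The plan is to reduce both assertions to the stalkwise behaviour of the ideals $\I_p$, and for the second assertion to use only the right-exactness of $f^\stella$ together with a single cokernel presentation of $k_K$. Throughout, for $t\in S$ I write $\mm_t\subseteq\OO_{S,t}$ for the maximal ideal and recall from \eqref{univpropAn} that $f(t)=\{\,i\in\Delta_n: t\notin D_i\,\}$, so that $i\in f(t)$ precisely when $(\I_i)_t=\OO_{S,t}$. The computation common to both parts is that $(\I_p)_t=\prod_{i\in p}(\I_i)_t$, whence
\[ (\I_p)_t=\OO_{S,t}\iff p\subseteq f(t),\qquad (\I_p)_t\subseteq\mm_t\iff p\not\subseteq f(t),\]
the second equivalence holding because a product of ideals lies in $\mm_t$ as soon as one factor does.

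For (1), I would note that the underlying space of $S_K$ is $\supp(\OO_S/\I_K)=\{t:(\I_K)_t\subseteq\mm_t\}$, and since $(\I_K)_t=\sum_{p\notin K}(\I_p)_t$ this holds iff $(\I_p)_t\subseteq\mm_t$ for every $p\notin K$, i.e. iff no $p\notin K$ satisfies $p\subseteq f(t)$. Because $K$ is a closed subset, hence downward closed for inclusion, this last condition is equivalent to $f(t)\in K$: if $f(t)\in K$ then every $p\subseteq f(t)$ lies in $K$, and conversely one takes $p=f(t)$. Thus $t\in S_K\iff f(t)\in K\iff t\in f^{-1}(K)$, giving (1).

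For (2), I would use that the natural morphism $\omega_K\colon L\to k$, with $L=k_{U_{p_1}}\oplus\cdots\oplus k_{U_{p_r}}$ and $p_1,\dots,p_r$ the closed points of $\A^n_{\FF_1}-K$, has image $k_{\A^n_{\FF_1}-K}$, so that $k_K=\Coker(\omega_K)$. Applying the right-exact functor $f^\stella$, using $f^\stella(k_{U_p})=\Lc_p$ and $f^\stella(i_{p_j,\0})=s_{p_j}$ (this last identity follows from $f^\stella\circ\iota=\Phi_X$ and the definition of $\Phi_X$, since $p_j-\0=p_j$), one obtains a right-exact sequence
\[ \bigoplus_{j=1}^{r}\Lc_{p_j}\xrightarrow{\ \oplus_j s_{p_j}\ }\OO_S\longrightarrow f^\stella(k_K)\longrightarrow 0,\]
so that $f^\stella(k_K)=\OO_S/\sum_{j}\I_{p_j}$.

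It then remains to identify $\sum_j\I_{p_j}$ with $\I_K=\sum_{p\notin K}\I_p$. One inclusion is immediate since each $p_j\notin K$. For the other, every $p\notin K$ lies in $\A^n_{\FF_1}-K=U_{p_1}\cup\cdots\cup U_{p_r}$, so $p\supseteq p_j$ for some $j$, whence $\I_p=\I_{p_j}\cdot\prod_{i\in p-p_j}\I_i\subseteq\I_{p_j}$; summing gives $\I_K\subseteq\sum_j\I_{p_j}$. Hence $f^\stella(k_K)=\OO_S/\I_K=\OO_{S_K}$. The proof is essentially routine; the only points needing care are that $f^\stella$ is \emph{merely} right exact, which is why one presents $k_K$ as the single cokernel $\Coker(\omega_K)$ rather than pushing the whole Koszul resolution through $f^\stella$, and the matching $\sum_j\I_{p_j}=\I_K$, which is precisely where the combinatorics of the open cover $\A^n_{\FF_1}-K=\bigcup_j U_{p_j}$ enters.
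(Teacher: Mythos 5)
Your proof is correct and follows essentially the same route as the paper's: part (1) reduces to comparing the zero loci of the ideals $\I_p$ with the preimages of the $H_i$ (you do this stalkwise, the paper via $(\I_p)_0=\bigcup_{i\in p}D_i$, which is the same computation), and part (2) applies the right-exact $f^\stella$ to a presentation of $k_K$ by sheaves $k_{U_p}$. The only divergence is that the paper uses the larger presentation $\bigoplus_{p\notin K}k_{U_p}\to k\to k_K\to 0$, which identifies the resulting ideal with $\I_K$ by definition, whereas your use of the minimal (Koszul) presentation over the closed points of $\A^n_{\FF_1}-K$ forces the extra—correctly handled—verification that $\sum_j\I_{p_j}=\I_K$.
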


\begin{proof} (1) $S_K=(\I_K)_0=\underset{p\notin K}\bigcap (\I_p)_0$ and $K=\underset{p\notin K}\bigcap (\A^n_{\FF_1}- U_p)$, so we are reduced to see that $(\I_p)_0= f^{-1}(\A^n_{\FF_1}- U_p)$. Since $(\I_p)_0= \underset{i\in p}\bigcup D_i$ and $\A^n_{\FF_1}- U_p=\underset{i\in p}\bigcup H_i$, one concludes because $f^{-1}(H_i)=D_i$.

(2) It suffices to apply $f^\stella$ to the exact sequence $\underset{p\notin K}\bigoplus k_{U_p}\to k\to k_K\to 0.$
\end{proof}

\begin{rem}{\rm In particular, for each $p\in\A^n_{\FF_1}$: $$\underset{i\notin p}\bigcap D_i= f^{-1}(C_p)=S_{C_p}\qquad \text{ and }\qquad  f^\stella(k_{C_p})=\OO_{S_{C_p}}=\OO_{\underset{i\notin p}\bigcap D_i}.$$}
\end{rem}

\begin{rem}{\rm Another data $(\Lc',s')$   on $S$ is said to be equivalent to $(\Lc,s)$, denoted by $(\Lc',s')\sim (\Lc,s)$, if there are isomorphisms $\phi_i\colon \Lc_i\overset\sim\to\Lc'_i$ such that $s'_i\circ\phi_i= s_i$. In this case, one has that $f =f'$, where $f'$ is the continuous map associated to $(\Lc',s')$, and  a natural isomorphism $f^\stella \simeq {f'}^\stella $. Hence, we shall make no distinction between two equivalent data, i.e., a data $(\Lc,s)$ is to be understood as an equivalence class of data.}
\end{rem}

\subsubsection{Algebraic morphisms} Let us assume now that the $\Lc_i$ are invertible $\OO_S$-modules and then $s_i\colon \Lc_i\to\OO_S$ is a global section of $\Lc_i^{-1}$. In this case  we say that $(\Lc,s)$ is a data of invertibles.

\begin{defn} {\rm Let $S$ be a $k$-scheme,  $(\Lc,s)$ a data of invertibles and
\[ f\colon S\to\A^n_{\FF_1}\quad,\quad f^\stella\colon \Shv_k(\A^n_{\FF_1})\to\Qcoh(S)\] the continuous map and the right-exact $k$-linear functor associated to them. We say that the pair $(f\colon S\to\A^n_{\FF_1} ,f^\stella)$ is the {\em algebraic $k$-morphism}  associated to $(\Lc,s)$. Usually, we shall use the abbreviated notation $f\colon S\to\A^n_{\FF_1}$ for such a morphism.

If the morphisms $s_i\colon \Lc_i\to\OO_S$ are injective, then $D_i$ are effective Cartier divisors and $\Lc_i\overset\sim\to \I_i=\OO_S(-D_i)$. In this case, the data $(\Lc,s)$ amount to a collection $D=(D_1,\dots, D_n)$ of effective Cartier divisors on $S$, and we shall denote
\[ f_D\colon S\longrightarrow\A^n_{\FF_1}\] the  algebraic $k$-morphism defined by them.  }
%
\end{defn}

\begin{rem}{\rm Algebraic morphisms appear in a natural way. For example, let $S$ be a regular   scheme  and let $U_1,\dots, U_n$ be an affine open covering of $S$. The complements $D_i=X-U_i$ are effective Cartier divisors, so they define an algebraic morphism $f\colon S\to \A^n_{\FF_1}$ (in fact, to $\PP^{n-1}_{\FF_1}$). More generally, if $Y\subseteq S$ is a closed subset and $U_1,\dots, U_n$ is an affine open covering of $X-Y$, then the Cartier divisors $D_i=X-U_i$ define an algebraic morphism $f\colon S\to \A^n_{\FF_1}$ such that $f^{-1}(\0)=Y$. See \cite{Sa} for a more systematic study of the interplay between schemes and finite spaces and \cite{SaTo2} as an example of how to reduce Grothendieck duality of schemes to that of ringed finite spaces.}
\end{rem}

\begin{rem}\label{composition}{\rm Let $f\colon S\to\A^n_{\FF_1}$ be the algebraic $k$-morphism associated to $(\Lc,s)$ and $g\colon S'\to S$ a morphism of $k$-schemes. We define $f\circ g\colon S'\to \A^n_{\FF_1}$ as the algebraic $k$-morphism associated to the data $(g^*\Lc,g^*(s))$. The associated continuous map coincides with $g\circ f$ and  
\[  (g\circ f)^\stella= g^*\circ f^\stella.\]}
\end{rem}

\begin{exs}{\rm (Classic  Stanley--Reisner correspondence) (1) Let us take $$S=\A^n_k:=\Spec k[x_1,\dots ,x_n]$$ the $n$-dimensional affine scheme over $k$; for each $i=1,\dots, n$,  let $D_i$ be the hyperplane defined by $x_i=0$ and $\pi_D\colon \A^n_k\to\A^n_{\FF_1}$ the  algebraic $k$-morphism defined by $D_1,\dots,D_n$. The   continuous map $\pi \colon \A^n_k\to\A^n_{\FF_1}$ is surjective and the functor 
\[\pi^\stella \colon \Shv_k(\A^n_{\FF_1})\to\Qcoh(\A^n_k)\] is exact. Moreover, $\pi^\stella (F)=0$ if and only if $F=0$ (for a finitely generated $F$). For any closed subset $K$ of $\A^n_{\FF_1}$, the closed subscheme $\pi^{-1}(K)=S_K\subseteq \A^n_k$ is precisely the Stanley--Reisner scheme associated to the simplicial complex $K$.

(2) Now, let $$S=\PP^n_k:=\Proj k[x_0,\dots ,x_n]$$ be the $n$-dimensional projective scheme over $k$; for each $i=0,\dots ,n$, let $D_i$ be the hyperplane with homogeneous equation $x_i=0$ and $\pi_D\colon \PP^n_k\to\A^{n+1}_{\FF_1}$ the algebraic $k$-morphism defined by $D_0,\dots,D_n$.  The image of  $\pi \colon\PP^n_k\to \A^{n+1}_{\FF_1}$ is $\PP^n_{\FF_1}$, and the functor $\pi^\stella\colon \Shv_k(\A^{n+1}_{\FF_1})\to\Qcoh(\PP^n_k)$ induces a functor (see \ref{abierto-cerrado}, (a), below) $$\pi^\stella_{\PP^n_{\FF_1}} \colon\Shv_k(\PP^n_{\FF_1})\to \Qcoh(\PP^n_k)$$ which  is exact. Moreover,  $\pi_{\PP^n}^\stella (F)=0$ if and only if $F=0$ (for a  finitely generated sheaf $F$). For any closed subset $K$ of $\A^{n+1}_{\FF_1}$ the closed subscheme $\pi^{-1}(K)\subseteq \PP^n_k$ (with structure sheaf $\OO_{\pi^{-1}(K)}=\pi^\stella_{\PP^n_{\FF_1}} (k_{K-\0})$) is the projectivization of  the Stanley--Reisner scheme associated to the simplicial complex $K$.

}

\begin{cosa}[\bf Direct image]\label{directimage} {\rm  Let $f\colon S\to\A^n_{\FF_1}$ be the algebraic $k$-morphism associated to $(\Lc,s)$. By construction, $f^\stella$ is right exact and commutes with direct limits. Hence, it has a right adjoint
\[ f_\stella\colon \Qcoh(S)\longrightarrow \Shv_k(\A^n_{\FF_1})\] whose explicit description is the following: for each $\M\in\Qcoh(S)$ and each $p\in\A^n_{\FF_1}$ one has \[ (f_\stella\M)_p=\Gamma(S,\M\otimes \Lc_p^{-1}).\] Indeed, $(f_\stella\M)_p=\Hom_{\A^n_{\FF_1}}(k_{U_p},f_\stella\M) = \Hom_{S}(f^\stella k_{U_p}, \M) = \Gamma(S,\M\otimes \Lc_p^{-1})$. 

If $S$ is an affine scheme, then $f_\stella$ is exact. 
}
\end{cosa}

\begin{cosa}[\bf Products]\label{product}{\rm Let $f_1\colon S_1\to \A^{n_1}_{\FF_1}$, $f_2\colon S_2\to \A^{n_2}_{\FF_1}$ be two algebraic $k$-morphisms, associated, respectively, to $(\Lc_1,s_1)$ and $(\Lc_2,s_2)$, and let us consider the product scheme $S_1\times_k S_2$.\medskip

For any quasicoherent-modules $\M_1\in\Qcoh(S_1)$, $ \M_2\in \Qcoh(S_2)$, let us denote
\[ \M_1\boxtimes\M_2=p_1^*\M_1\otimes_{\OO_{S_1\times_kS_2}}p_2^*\M_2,\qquad p_i\colon S_1\times_kS_2 \to S_i\]
 and, analogously,  for any sheaves $F_1\in\Shv_k(\A^{n_1}_{\FF_1})$, $F_2\in\Shv_k(\A^{n_2}_{\FF_1})$
\[ F_1\boxtimes F_2=\pi_1^{-1}F_1\otimes_k\pi_2^{-1}F_2,\qquad \pi_i\colon \A^{n_1}_{\FF_1}\times \A^{n_2}_{\FF_1} \to \A^{n_i}_{\FF_1}.\] The data $(p_1^*\Lc_1,p_1^*s_1)$, together with $(p_2^*\Lc_2,p_2^*s_2)$, define an algebraic morphism $$f_1\times f_2\colon S_1\times S_2 \to\A^n_{\FF_1},\qquad  n=n_1+n_2.$$ 
Then one has:
\[ (f_1\times f_2)^\stella (F_1\boxtimes F_2)= (f_1^\stella F_1)\boxtimes (f_2^\stella F_2).\] Moreover, if $S_1$ and $S_2$ are flat over $k$, then
\[ (f_1\times f_2)_\stella (\M_1\boxtimes \M_2)= ({f_1}_\stella \M_1)\boxtimes ({f_2}_\stella \M_2).\] Indeed, for each $(p,q)\in \A^{n_1}_{\FF_1}\times \A^{n_2}_{\FF_1}$, one has
\[\aligned [(f_1\times f_2)_\stella (\M_1\boxtimes \M_2)]_{(p,q)} &= \Hom_{\A^n_{\FF_1}}( k_{U_{(p,q)}}, (f_1\times f_2)_\stella (\M_1\boxtimes \M_2) ) 
\\ &= \Hom_{\A^n_{\FF_1}}( k_{U_p}\boxtimes k_{(U_q}, (f_1\times f_2)_\stella (\M_1\boxtimes \M_2) )
\\ & =  \Hom_{S_1\times_k S_2}((f_1\times f_2)^\stella  (k_{U_p}\boxtimes k_{U_q}),  \M_1\boxtimes \M_2  ) 
\\ &= \Hom_{S_1\times_k S_2}((f_1^\stella k_{U_p}) \boxtimes (f_2^\stella k_{U_q}),  \M_1\boxtimes \M_2  ) 
\\ &\overset{\text{by flatness}}= \Hom_{S_1 }( f_1^\stella k_{U_p},    \M_1   )  \otimes_k 
\Hom_{S_2 }( f_2^\stella k_{U_q},    \M_q   ) 
\\ &=({f_1}_\stella \M_1)_p \otimes_k ({f_2}_\stella \M_2)_q = [({f_1}_\stella \M_1)\boxtimes ({f_2}_\stella \M_2)]_{(p,q)}.\endaligned\]
}\end{cosa}

\begin{cosa} {\rm Let $f\colon X\to \A^n_{\FF_1}$ be an algebraic $k$-morphism and $F,G\in\Shv_k(\A^n_{\FF_1})$. Let us see that there is a natural morphism
\begin{equation}\label{fstellatensor} (f^\stella F)\otimes_{\OO_S} (f^\stella G)\longrightarrow f^\stella (F\otimes_kG).\end{equation}

For any $p\in \A^n_{\FF_1}$ one has a natural morphism  $\Psi_p\colon\Lc_p\otimes_{\OO_S}\Lc_p\to\Lc_p$, induced by $s_p\colon \Lc_p\to\OO_S$. Then, for any $p,q\in\A^n_{\FF_1}$ one has a natural morphism 
\[ \Lc_p\otimes_{\OO_S}\Lc_q\to\Lc_{p+q},\] 
defined as follows: let us denote $p'=p-(p\cdot q)$, $q'=q-(p\cdot q)$, so $\Lc_p=\Lc_{p'}\otimes \Lc_{p\cdot q}$ and $\Lc_q=\Lc_{q'}\otimes \Lc_{p\cdot q}$, and then 
\[ \Lc_p\otimes \Lc_q = \Lc_{p'}\otimes \Lc_{q'}\otimes \Lc_{p\cdot q} \otimes \Lc_{p\cdot q} \overset{1\otimes \Psi_{p\cdot q}}\longrightarrow \Lc_{p'}\otimes \Lc_{q'}\otimes \Lc_{p\cdot q}=\Lc_{p+q}.\]
 If we consider the (non-commutative) diagram
\[ \xymatrix{ {\A^n_{\FF_1}}^\text{op}\times {\A^n_{\FF_1}}^\text{op} \ar[r]^{\Phi\times\Phi\quad}\ar[d]_{+} & \Qcoh(S)\times \Qcoh(S)\ar[d]^{\otimes} \\ {\A^n_{\FF_1}}^\text{op}\ar[r]^{\Phi} &\Qcoh(S)}\] the above morphisms $\Lc_p\otimes\Lc_q\to\Lc_{p+q}$ define a morphism of functors
\[ \otimes\circ (\Phi\times\Phi)\to \Phi\circ +\] and then, by Yoneda extension, a morphism of functors
\[ \delta_S^*\circ (f\times f)^\stella \longrightarrow f^\stella\circ \delta^{-1}\] where $\delta_S\colon S\to S\times_kS$ and $\delta \colon \A^n_{\FF_1}\to \A^n_{\FF_1}\times \A^n_{\FF_1}$ are the diagonal morphisms. Applied to $F\boxtimes G$, one obtains (taking into account  \ref{product}) the desired morphism $(f^\stella F)\otimes_{\OO_S} (f^\stella G)\to f^\stella (F\otimes_kG)$.
}\end{cosa}

\begin{cosa}[\bf Restriction to open and closed subsets]\label{abierto-cerrado}{\rm  Let $S$ be a $k$-scheme and $f \colon S\to\A^n_{\FF_1}$ an algebraic $k$-morphism. 

(a) Let $U\overset i\hookrightarrow \A^n_{\FF_1}$ be an open subset, $S_U=f^{-1}(U)$ the induced open subscheme of $S$ and $i_S\colon S_U\hookrightarrow S$ the inclusion. We have then a continuous map $f_{\vert U}\colon S_U\to U$, a right exact $k$-linear functor
\[ f_U^\stella :=i_S^*\circ f^\stella \circ i_!\colon \Shv_k(U)\to\Qcoh(S_U).\]
 and  a commutative diagram
\[\xymatrix{ \Shv_k(\A^n_{\FF_1})\ar[r]^{f^\stella } \ar[d]_{i^{-1}} & \Qcoh(S)\ar[d]^{i_S^*}\\ \Shv_k(U)\ar[r]^{f_U^\stella } & \Qcoh(S_U).
} \] If $f^\stella$ is exact, so is $f_U^\stella $.\medskip

(b) 
Let $j\colon K\hookrightarrow \A^n_{\FF_1}$ be a closed subset,   $S_K$ the associated closed subscheme of $S$ and   $j_S\colon S_K\hookrightarrow S$ the closed immersion. Let $F$ be a sheaf on $\A^n_{\FF_1}$ supported on $K$ (i.e., $F=F_K=j_*F_{\vert K}$); then $f^\stella F$ is a quasicoherent $\OO_S$-module annihilated by $\I_K$. Indeed, Let $p_1,\dots p_r$ be the closed points of $\A^n_{\FF_1}-K$ and let us consider the exact sequence
\[ \bigoplus_{i=1}^r k_{U_{p_i}}\to k\to k_K\to 0.\] Since $F$ is supported at $K$, the morphism $F\to F_K$ is an isomorphism, so $F\otimes_k k_{U_{p_i}}\to F$ is null. Then it is also null the morphism
\[f^\stella F\otimes f^\stella k_{U_{p_i}} \to f^\stella(F\otimes_k k_{U_{p_i}})\to f^\stella F\] and then $f^\stella F$ is annihilated by $\I_K$.  Thus,  $f^\stella F $ is an $\OO_{S_K}$-module, and then $f^\stella F={j_S}_*j_S^{-1}f^\stella F$ and $j_S^{-1}f^\stella F$ is a quasi-coherent module on $S_K$. Thus we obtain a  right-exact  functor
$$f_{K}^\stella \colon \Shv_k(K)\to\Qcoh(S_K), \quad F\mapsto j_S^{-1}(f^\stella j_*F) $$ and a commutative  diagram
\[\xymatrix{ \Shv_k(\A^n_{\FF_1})\ar[r]^{f^\stella }   & \Qcoh(S) \\ \Shv_k(K)\ar[r]^{ f_{K}^\stella  }\ar[u]_{j_*} & \Qcoh(S_K)\ar[u]^{{j_S}_*}.
} \] If $f^\stella$ is exact, then $f_{K}^\stella$ is exact. 

\begin{rem}{\rm All the results that we shall obtain for an algebraic morhism $f\colon S\to \A^n_{\FF_1}$ may be easily generalized to $f_U$ or $f_K$.}
\end{rem}
}\end{cosa}

\begin{cosa}[\bf Derived functors] {\rm We shall denote $$\LL f^\stella\colon D(\A^n_{\FF_1})\longrightarrow D(S)$$ the left derived functor of $f^\stella$ (notice that $\A^n_{\FF_1}$ has enough projectives) and
\[ \RR f_\stella\colon D(S) \longrightarrow  D(\A^n_{\FF_1})\] the right derived functor of $f_\stella$. They are $k$-linear:
\[ \LL f^\stella (E\overset\LL \otimes_kF)=E\overset\LL\otimes_k \LL f^\stella F, \qquad \RR f_\stella (E\overset\LL \otimes_k \M)=E\overset\LL\otimes_k \RR f_\stella F\] for any $E\in D(k), F\in D(\A^n_{\FF_1}),\M\in D(S)$, and mutually adjoint:
\[ \RR\Hom_S^\pun(\LL f^\stella F,\M)=\RR\Hom_{\A^n_{\FF_1}}^\pun(F,\RR f_\stella\M).\]

We leave the reader to give a natural morphism (extending \ref{fstellatensor}) $$(\LL f^\stella F)\overset\LL\otimes (\LL f^\stella G) \to \LL f^\stella (F\overset\LL\otimes G)$$ and then a morphism
\[ \LL f^\stella \RR\HHom_{\A^n_{\FF_1}}^\pun(F,G)\to \RR\HHom_S^\pun( \LL f^\stella F,\LL f^\stella G).\]
}
\end{cosa}

\begin{thm} Let $f\colon S\to\A^n_{\FF_1}$ be an algebraic morphism. For any $\M$ in $D(S)$ one has:
\[ \RR\Gamma(S,\M)=\RR\Gamma (\A^n_{\FF_1},\RR f_\stella\M).\]
 Moreover {\rm (Finiteness theorem)}:  If $S$ is proper over $k$, then $\RR f_*$ maps $D_c^b(S)$ into $D_c^b(\A^n_{\FF_1})$ .
\end{thm}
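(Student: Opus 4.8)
The plan is to deduce both statements from the adjunction
$\RR\Hom_S^\pun(\LL f^\stella F,\M)=\RR\Hom_{\A^n_{\FF_1}}^\pun(F,\RR f_\stella\M)$
recorded above, applied to the sheaves $k_{U_p}$, together with the two basic facts that on $\A^n_{\FF_1}$ the functor $\Hom_{\A^n_{\FF_1}}(k_{U_p},-)=(-)_p$ is exact (so each $k_{U_p}$ is projective, whence $\LL f^\stella k_{U_p}=f^\stella k_{U_p}=\Lc_p$) and that $\Lc_p$ is invertible.

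For the first identity I would take $F=k=k_{U_\0}$. Since $\Lc_\0=\OO_S$ one has $\LL f^\stella k=\OO_S$, so the left-hand side of the adjunction is $\RR\Hom_S^\pun(\OO_S,\M)=\RR\Gamma(S,\M)$. On the right-hand side, $k$ is the unit for $\otimes_k$, equivalently $\Hom_{\A^n_{\FF_1}}(k,-)=\Gamma(\A^n_{\FF_1},-)$, so $\RR\Hom_{\A^n_{\FF_1}}^\pun(k,\RR f_\stella\M)=\RR\Gamma(\A^n_{\FF_1},\RR f_\stella\M)$. Comparing the two sides yields $\RR\Gamma(S,\M)=\RR\Gamma(\A^n_{\FF_1},\RR f_\stella\M)$.

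For the finiteness theorem, the decisive step is a stalkwise computation of $\RR f_\stella$. Because $k_{U_p}$ is projective and $\Hom_{\A^n_{\FF_1}}(k_{U_p},-)=(-)_p$, for any $\M\in D(S)$ and any $p\in\A^n_{\FF_1}$ the adjunction gives
\[ (\RR f_\stella\M)_p=\RR\Hom_{\A^n_{\FF_1}}^\pun(k_{U_p},\RR f_\stella\M)=\RR\Hom_S^\pun(\Lc_p,\M)=\RR\Gamma(S,\M\otimes\Lc_p^{-1}),\]
where the last equality uses that $\Lc_p$ is invertible. Now if $\M\in D_c^b(S)$, then $\M\otimes\Lc_p^{-1}\in D_c^b(S)$ as well, since tensoring with a line bundle preserves boundedness and coherence. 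As $S$ is proper over the noetherian ring $k$, the classical finiteness theorem for the proper morphism $S\to\Spec k$ shows that $\RR\Gamma(S,\M\otimes\Lc_p^{-1})$ is bounded with finitely generated cohomology over $k$. Finally, since $\A^n_{\FF_1}$ has only finitely many points, the stalkwise statements $H^i(\RR f_\stella\M)_p=H^i(S,\M\otimes\Lc_p^{-1})$ combine into the global one: these $k$-modules are all finitely generated and vanish for $|i|$ large, uniformly in $p$, so $\RR f_\stella\M\in D_c^b(\A^n_{\FF_1})$.

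The argument is essentially formal once the stalk formula $(\RR f_\stella\M)_p=\RR\Gamma(S,\M\otimes\Lc_p^{-1})$ is in hand, so the only genuine external input is the classical coherence of higher direct images under a proper morphism. The point to be careful about is precisely that taking the stalk at $p$ commutes with $\RR f_\stella$; this is exactly what projectivity of $k_{U_p}$ (equivalently, exactness of $(-)_p$) guarantees, and it is where the special geometry of $\A^n_{\FF_1}$ enters. Given this, no spectral-sequence or boundedness subtlety remains, because the finiteness of the poset $\A^n_{\FF_1}$ turns finitely many stalkwise finiteness statements into the single global one.
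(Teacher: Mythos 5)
Your argument is correct and is essentially the paper's own proof: the first identity is obtained by applying the adjunction to $F=k$ using $\LL f^\stella k=\OO_S$, and the finiteness theorem follows from the stalk formula $(\RR f_\stella\M)_p=\RR\Hom_S^\pun(\Lc_p,\M)=\RR\Gamma(S,\M\otimes\Lc_p^{-1})$ together with the classical coherence of cohomology for a proper $k$-scheme. Your extra remarks on why the stalk computation is legitimate (projectivity of $k_{U_p}$) and on finiteness of the poset are correct elaborations of steps the paper leaves implicit.
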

\begin{proof} The first result follows immediately from adjunction, because $\LL f^* k=f^* k =\OO_S$. For the finiteness result, if $\M\in D^b_c(S)$, then, for any $p\in\A^n_{\FF_1}$ one has
\[ (\RR f_\stella \M)_p= \RR\Hom_{\A^n_{\FF_1}}^\pun(k_{U_p}, \RR f_\stella \M) = \RR\Hom_{S}^\pun(\Lc_p, \M ) = \RR\Gamma(S,\M\otimes \Lc_p^{-1}).\] Since $S$ is proper over $k$, $\RR\Gamma(S,\M\otimes \Lc_p^{-1})$ belongs to $D^b_c(k)$ and we are done.
\end{proof}

\subsection{Flatness} 

\begin{defn} {\rm  An algebraic $k$-morphism $f \colon S\to \A^n_{\FF_1}$ is {\em flat } if $f^\stella\colon \Shv_k(\A^n_{\FF_1})\to\Qcoh(S)$ is exact.  If, in addition, for any finitely generated $F\in\Shv_{k-{\rm fg}}(\A^n_{\FF_1})$, $f^\stella F=0$ implies $F=0$, then we say that $f$ is {\em faithfully flat}.}
\end{defn}

\begin{rem}{\rm If $f \colon S\to \A^n_{\FF_1}$ is flat, then the morphisms $s_i\colon\Lc_i\to\OO_S$ are injective, because $s_i$ is obtained by applying $f^\stella$ to the inclusion $k_{U_i}\hookrightarrow k$. Hence a flat algebraic $k$-morphism is defined by a collection $D$ of effective Cartier divisors. In the next theorem we see what conditions a collection $D$ of divisors must satisfy in order to $f_D$ to be flat.}
\end{rem}

\begin{defn}\label{generalposition} {\rm Let us  recall the usual Koszul complex associated to a sequence of effective Cartier divisors. Let $E_1,\dots,E_r$ be effective Cartier divisors on $S$ and $\OO_S(-E_i)$ the ideal defining $E_i$. Let $\M=\oplus_{i=1}^r\OO_S(-E_i)$ and $\omega\colon\M\to\OO_S$ the natural morphism. Then the Koszul complex $\Kos_\pun(E_1,\dots,E_r)$ is the complex
\[ 0\to {\bigwedge}^r_{\OO_S}\M\overset{i_\omega}\longrightarrow \cdots \overset{i_\omega}\to {\bigwedge}^2_{\OO_S}\M \overset{i_\omega}\longrightarrow \M \overset{ \omega}\longrightarrow \OO_S\to 0\] where $i_\omega$ is the inner contraction with $\omega$. 

 We say that $D_1,\dots ,D_n$ are in {\em  general position} if for any subset $p=\{i_1,\dots,i_r\}$ of $\{1,\dots,n\}$ the Koszul complex   $\Kos_\pun(D_{i_1},\dots,D_{i_r})$ is a resolution of $\OO_{\underset{i\in p}\bigcap D_i}$; in other words, for each $t\in \underset{i\in p}\bigcap D_i$, if   $f_{i_j}\in\OO_{S,t}$ is a generator of the ideal $\OO_S(-D_{i_j})$ at $t$, then $f_{i_1},\dots, f_{i_r}$ is a regular sequence in $\OO_{S,t}$.

If  $S$ is a $k$-scheme, we say that $D_1, \dots ,D_n$ {\em have $k$-flat intersections} if  $\underset{i\in p}\bigcap D_i$ is flat over $k$ for any $p$.}
\end{defn}

\begin{prop}\label{generic-open} Let $f \colon S\to\A^n_{\FF_1}$ be the continuous map defined by $n$ effective Cartier divisors  $D_1,\dots ,D_n$ on $S$. If these divisors   are in general position, then:\begin{enumerate} \item  $f \colon S\to \A^n_{\FF_1}$ is   open. \item A point $p\in\A^n_{\FF_1}$ belongs to the image of $f $ if and only if $\underset{i\notin p}\bigcap D_i$ is not empty.
\item $f $ is surjective if and only if $D_1\cap\cdots\cap D_n\neq \emptyset$.
\end{enumerate}
\end{prop}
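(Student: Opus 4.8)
The plan is to translate the poset combinatorics of $\A^n_{\FF_1}$ into incidence conditions among the divisors, and then extract the geometric meaning of general position. First I would record the two preimage computations that follow from $f(t)=\{i\colon t\notin D_i\}$ and $f^{-1}(H_i)=D_i$: using $U_p=\bigcap_{i\in p}U_i$ and $C_p=\bigcap_{i\notin p}H_i$ one gets
\[ f^{-1}(U_p)=S-\bigcup_{i\in p}D_i,\qquad f^{-1}(C_p)=\bigcap_{i\in\wh p}D_i,\]
so that, set-theoretically, the fiber over $p$ is $f^{-1}(p)=\bigl(\bigcap_{i\in\wh p}D_i\bigr)-\bigcup_{i\in p}D_i$. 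This already yields the easy half of (2): if $p=f(t)$ then $t\in f^{-1}(C_p)=\bigcap_{i\in\wh p}D_i$, so the intersection is nonempty, and no hypothesis is needed. The real content is the converse in (2) and the openness in (1), and both reduce to the same geometric input.

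The heart of the argument is the following consequence of general position, which I would isolate as a lemma: \emph{if $q\subseteq\Delta_n$, $j\notin q$, and $W'$ is an irreducible component of $Z_q:=\bigcap_{i\in q}D_i$, then $W'\not\subseteq D_j$.} To prove it, let $\eta$ be the generic point of $W'$ and suppose $\eta\in D_j$; then $\eta\in Z_{q\cup\{j\}}$, so by general position applied to both $q$ and $q\cup\{j\}$ the local equations $(f_i)_{i\in q}$ and $(f_i)_{i\in q\cup\{j\}}$ are regular sequences in $\OO_{S,\eta}$, whence (the Koszul complex being symmetric in its entries, i.e.\ regular sequences in a noetherian local ring may be permuted) $f_j$ is a non-zero-divisor modulo $(f_i)_{i\in q}$. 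On the other hand $\eta$ is a generic point of $Z_q$, so $(f_i)_{i\in q}\OO_{S,\eta}$ is $\mm_\eta$-primary and $f_j\in\mm_\eta=\sqrt{(f_i)_{i\in q}\OO_{S,\eta}}$; thus $f_j^m\in(f_i)_{i\in q}$ for some $m$, and the non-zero-divisor property then forces $1\in(f_i)_{i\in q}\OO_{S,\eta}$, contradicting $\eta\in Z_q$. I expect this step — extracting ``a component cannot sink into one more divisor'' from the regular-sequence definition of general position — to be the main obstacle, since it is precisely the place where the codimension is not allowed to drop.

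With the lemma in hand, (2) and (1) are two variants of the same prime-avoidance move. For the converse of (2), pick a component $Z'$ of the nonempty $Z_{\wh p}$; by the lemma $Z'\cap D_j\subsetneq Z'$ for every $j\in p$, so the irreducible $Z'$ is not the union of the finitely many proper closed subsets $Z'\cap D_j$, and any point $t'$ in the complement satisfies $t'\in\bigcap_{i\in\wh p}D_i$ and $t'\notin D_j$ for $j\in p$, i.e.\ $f(t')=p$. For openness I would show that $f(V)$ is upward closed for each open $V\subseteq S$: given $t\in V$ with $p=f(t)$ and $q\supseteq p$, put $r=q-p$ and shrink to $V'=V\cap\bigcap_{i\in p}(S-D_i)$, an open neighbourhood of $t$ on which the conditions $i\in p$ are automatic; choosing a component $W'$ of $\bigcap_{i\in\wh q}D_i$ through $t$, the lemma gives $W'\not\subseteq D_i$ for each $i\in r$, so $V'\cap W'$ and $W'-\bigcup_{i\in r}D_i$ are nonempty opens of the irreducible $W'$ and therefore meet in a point $t'$; checking memberships gives $f(t')=q$ with $t'\in V'\subseteq V$.

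Finally (3) is immediate from (2). Since $\bigcap_{i\in\wh p}D_i\supseteq D_1\cap\cdots\cap D_n$ for every $p$, all of these intersections are nonempty exactly when $D_1\cap\cdots\cap D_n\neq\emptyset$ (the extreme case being $p=\0$, where $\wh p=\Delta_n$); by (2) this is equivalent to every $p$ lying in the image, i.e.\ to the surjectivity of $f$.
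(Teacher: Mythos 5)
Your proof is correct and follows essentially the same route as the paper's: both arguments rest on the single geometric fact that, by general position, no irreducible component of $\bigcap_{i\in q}D_i$ can be contained in a further divisor $D_j$ (equivalently, that $f^{-1}(C_{p'})$ is nowhere dense in $f^{-1}(C_p)$ for $p'<p$), followed by prime-avoidance and generic-point arguments, and both obtain (3) as an immediate consequence. The only substantive difference is that the paper merely asserts this key fact (``because $D_1,\dots,D_n$ are in general position''), whereas you prove it via the permuted regular sequence and the $\mm_\eta$-primary ideal at the generic point of a component; your direct prime-avoidance proof of (2), in place of the paper's deduction of (2) from openness of the image, is a minor reorganization.
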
 

\begin{proof} (1) Let us first prove that $\Ima f$ is open. For each $p\in\A^n_{\FF_1}$, $f^{-1}(C_p)=\bigcap_{i\notin p} D_i$. If $f^{-1}(C_p)$ is not empty, then, for any $p'<p$,  $f^{-1}(C_{p'})$   is nowhere dense in $f^{-1}(C_p)$, because $D_1,\dots,D_n$ are in general position. Let us conclude that $\Ima f$ is open. Let $p\in\Ima f$ and $q>p$. We have to prove that $q\in\Ima f$. One has that $f^{-1}(C_q)$ is not empty, because it contains $f^{-1}(p)$. Since $\bigcup_{q'<q}f^{-1}(C_{q'})$ is nowhere dense in $f^{-1}(C_q)$, one has that
$$f^{-1}(q)=f^{-1}(C_q)-(\bigcup_{q'<q}f^{-1}(C_{q'})) $$ is not empty, i.e., $q\in\Ima f$.

Now, let $U$ be an open subset of $S$, $i\colon U\hookrightarrow S$ the inclusion. Then $f\circ i\colon U\to\A^n_{\FF_1}$ is the continuous map defined by the collection of Cartier divisors $D_{\vert U}$. Since $D_{\vert U}$ are in general position, $\Ima (f \circ i) $ is an open subset, i.e., $f(U)$ is open.

(2) If $p$ belongs to $\Ima f$, then obviously $f^{-1}(C_p)$ is not empty; conversely, if $f^{-1}(C_p)$ is not empty, then $C_p\cap\Ima f$ is not empty; since this is an open subset of $C_p$, it must contain its generic point $p$. 

(3) Since $\Ima f$ is open, $f$ is surjective if and only if $\0\in\Ima f$, and one concludes because $f^{-1}(\0)=D_1\cap\dots\cap D_n$.
\end{proof}

\begin{thm}\label{thetheorem} Let $S$ be a   $k$-scheme and $f \colon S\to\A^n_{\FF_1}$ the algebraic $k$-morphism defined by a collection  $D=\{ D_1,\dots,D_n\}$ of effective Cartier divisors on $S$. Let  us denote $V=\Ima f$.
The following conditions are equivalent:
\begin{enumerate} \item $f $ is flat.
\item $S$ is flat over $k$ and $D_1,\dots,D_n$ are in general position and have $k$-flat intersections.
\end{enumerate} 
Moreover, if any of these equivalent conditions hold, then:

\begin{enumerate}\item[(a)] If $F\in\Shv_k(\A^n_{\FF_1})$ is $k$-flat, then $f^\stella F$ is also    $k$-flat.
\item[(b)] The following conditions are equivalent:
\begin{enumerate}\item[(b.1)] $f $ is faithfully flat  on $V$, i.e.,  for any finitely generated sheaf $F\in\Shv_{k-{\rm fg}}(\A^n_{\FF_1})$ one has: $f^\stella (F)=0\Leftrightarrow F_{\vert V}=0$.
\item[(b.2)] For any $p\in V$, the scheme $\bigcap_{i\notin p}  D_i$ is   faithfully flat over $k$.
\end{enumerate}
\end{enumerate}
%
%
\end{thm}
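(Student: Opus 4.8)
The linchpin is to compute the derived functor $\LL f^\stella$ on the sheaves $k_{C_p}$. Since $\A^n_{\FF_1}-C_p=\bigcup_{i\notin p}U_i$, the Koszul complex of a closed subset reads $\Kos_\pun(k_{C_p})=\bigotimes_{i\notin p}(k_{U_i}\to k)$, a left resolution of $k_{C_p}$ by projective sheaves, hence $\LL f^\stella$-acyclic. Applying the $k$-linear right-exact functor $f^\stella$ termwise and using $f^\stella(k_{U_i})=\Lc_i=\OO_S(-D_i)$ and $f^\stella(k)=\OO_S$, I obtain
\[\LL f^\stella(k_{C_p})=f^\stella\Kos_\pun(k_{C_p})=\Kos_\pun(\{D_i\}_{i\notin p}),\]
the ordinary Koszul complex of the divisors $\{D_i\}_{i\notin p}$. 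As $p$ ranges over $\A^n_{\FF_1}$ the complements $\{i\notin p\}$ range over all subsets, so the vanishing $\LL_{>0}f^\stella(k_{C_p})=0$ for all $p$ is precisely the general position condition of Definition \ref{generalposition}; and when it holds one has $\LL f^\stella(k_{C_p})=f^\stella(k_{C_p})=\OO_{S_{C_p}}$ with $S_{C_p}=\bigcap_{i\notin p}D_i$.

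For $(1)\Rightarrow(2)$, an exact $f^\stella$ has vanishing higher left derived functors. Applying it to the constant-sheaf sequence $0\to E'\otimes_kk\to E\otimes_kk\to E''\otimes_kk\to 0$ attached to any short exact sequence of $k$-modules yields, by $k$-linearity, exactness of $0\to E'\otimes_k\OO_S\to E\otimes_k\OO_S\to E''\otimes_k\OO_S\to 0$, so $\OO_S=f^\stella(k)$ is $k$-flat and $S$ is flat over $k$. General position is the vanishing $\LL_{>0}f^\stella(k_{C_p})=0$ of the previous paragraph. Finally, since $k_{C_p}$ is a flat sheaf of $k$-modules, applying the exact functor $f^\stella$ to $E'\otimes_kk_{C_p}\hookrightarrow E\otimes_kk_{C_p}$ shows that $\OO_{S_{C_p}}=f^\stella(k_{C_p})$ is $k$-flat, i.e.\ the $D_1,\dots,D_n$ have $k$-flat intersections.

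The converse $(2)\Rightarrow(1)$ is the crux, since exactness must be checked for \emph{every} sheaf, not just the $k$-flat ones, and this is the step I expect to be the main obstacle. The decisive observation is that, under $(2)$, $E\otimes_kk_{C_p}$ is $\LL f^\stella$-acyclic for an arbitrary $k$-module $E$: by $k$-linearity
\[\LL f^\stella(E\otimes_kk_{C_p})=E\overset\LL \otimes_k\LL f^\stella(k_{C_p})=E\overset\LL \otimes_k\OO_{S_{C_p}},\]
which the $k$-flatness of $\OO_{S_{C_p}}$ collapses to $E\otimes_k\OO_{S_{C_p}}$ in degree $0$. Hence every term $C^0G=\prod_pG_p\otimes_kk_{C_p}$ of the upper standard resolution is $f^\stella$-acyclic, for all $G$. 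Embedding $F\hookrightarrow C^0F$ with cokernel $F'$ and using the long exact sequence of $\LL f^\stella$ gives the dimension shift $\LL_jf^\stella F\cong\LL_{j+1}f^\stella F'$ for $j\geq 1$; iterating $F\rightsquigarrow F'\rightsquigarrow\cdots$ and invoking $\LL_{>n}f^\stella=0$ — valid because, $S$ being $k$-flat, the length-$n$ lower standard resolution $C_\pun F$ consists of $f^\stella$-acyclic terms $G_{p_0}\otimes_kk_{U_{p_i}}$ (as $\Lc_{p_i}$ is $k$-flat) — forces $\LL_1f^\stella F=0$. Thus $f^\stella$ is exact. Part $(a)$ then follows at once: for $k$-flat $F$ one has $E\overset\LL \otimes_kf^\stella F=\LL f^\stella(E\otimes_kF)=f^\stella(E\otimes_kF)$ in degree $0$ for all $E$, so $f^\stella F$ is $k$-flat.

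Part $(b)$ I would prove by localizing on the strata $f^{-1}(p)=S_{C_p}-\bigcup_{q<p}S_{C_q}$, which under general position are open dense in $S_{C_p}=\bigcap_{i\notin p}D_i$ and nonempty exactly for $p\in V$ (Proposition \ref{generic-open}). Restricting along the open $U_p$ with $C_p\cap U_p=\{p\}$ and using \ref{abierto-cerrado}, $f^\stella F$ restricts on $f^{-1}(p)$ to $F_p\otimes_k\OO_{f^{-1}(p)}$, so (as $f^\stella$ is exact) $f^\stella F=0$ if and only if $F_p\otimes_k\OO_{S_{C_p}}=0$ for every $p\in V$. If each $\bigcap_{i\notin p}D_i$ ($p\in V$) is faithfully flat over $k$, this forces $F_p=0$ for all $p\in V$, i.e.\ $F_{\vert V}=0$, giving $(b.2)\Rightarrow(b.1)$. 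Conversely, if some $S_{C_p}$ with $p\in V$ fails to be faithfully flat over $k$, there is a maximal ideal $\mathfrak m$ with $(k/\mathfrak m)\otimes_k\OO_{S_{C_p}}=0$; then $F=(k/\mathfrak m)\otimes_kk_{C_p}$ is finitely generated, nonzero on $V$ (its stalk at $p$ is $k/\mathfrak m$), yet $f^\stella F=(k/\mathfrak m)\otimes_k\OO_{S_{C_p}}=0$, violating $(b.1)$.
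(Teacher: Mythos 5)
Your treatment of the equivalence $(1)\Leftrightarrow(2)$ and of part (a) is correct and follows essentially the same route as the paper: the identity $f^\stella\Kos_\pun(k_{C_p})=\Kos_\pun(\{D_i\}_{i\notin p})$, the recognition that general position is exactly the vanishing of $\LL_{>0}f^\stella(k_{C_p})$, the acyclicity of $E\otimes_k k_{U_p}$ and $E\otimes_k k_{C_p}$ under hypothesis (2), and the standard resolutions to propagate acyclicity to every sheaf. (The paper concludes directly from the finite right resolution $C^\pun F$ by acyclics, rather than via your dimension shift combined with the bound $\LL_{>n}f^\stella=0$; the two are interchangeable.)

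The genuine gap is in part (b), in the step asserting that ``$f^\stella F$ restricts on $f^{-1}(p)$ to $F_p\otimes_k\OO_{f^{-1}(p)}$''. This formula is false: for $F=k_{U_q}$ with $q>p$ one has $F_p=0$, while $f^\stella F=\Lc_q$ is invertible, so its pullback to $f^{-1}(p)$ is a nonzero invertible sheaf whenever that stratum is nonempty. The conceptual point is that $f^\stella$ respects supports only along \emph{closed} subsets: the correct statement is $f^\stella(F_p\otimes_k k_{\{p\}})\cong F_p\otimes_k\OO_{S_{C_p}}$ (up to a twist by the invertible $\Lc_p$), a module living on all of $S_{C_p}$ and not just on the open stratum $f^{-1}(p)$. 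The distinction is not cosmetic, because $M\otimes_k\OO_{f^{-1}(p)}=0$ does not imply $M\otimes_k\OO_{S_{C_p}}=0$ (take $k=\ZZ$, $M=\ZZ/2$, $S_{C_p}=\Spec\ZZ$ and $f^{-1}(p)=\Spec\ZZ[1/2]$), and it is the latter vanishing that your faithful-flatness argument requires. The equivalence you actually use downstream --- $f^\stella F=0$ if and only if $F_p\otimes_k\OO_{S_{C_p}}=0$ for all $p\in V$ --- is nevertheless true and easily repaired: filter $F$ by the closed sets of a linear extension of the partial order, so that the successive quotients are the sheaves $F_p\otimes_k k_{\{p\}}$, and apply the exact functor $f^\stella$. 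With that lemma in place the rest of (b) goes through; in particular your $(\mathrm{b.1})\Rightarrow(\mathrm{b.2})$ via a maximal ideal outside the image of $S_{C_p}\to\Spec k$ (such an ideal exists because the image of a flat morphism is stable under generization) is correct and is in fact more direct than the paper's argument, which reduces fibrewise to the case of a field and then compares supports at generic points.
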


\begin{proof}  First notice that,  from the very definitions, one has:
\begin{equation}\label{Koszul} f^\stella(\Kos_\pun(k_{C_p}))=\Kos_\pun(D_{i_1},\dots, D_{i_r}),\end{equation}  with $\{i_1,\dots,i_r\}=\{i:i\notin p\}$.

Let us first see that $(1)\Rightarrow (a)$. Assume that $f_D$ is flat  and let $F$ be a $k$-flat sheaf on $\A^n_{\FF_1}$.
 The composition  $$\aligned \C_{k-\text{mod}}&\longrightarrow \Shv_k(\A^n_{\FF_1})\overset{f^\stella}\longrightarrow \Qcoh(S)\\ E&\longmapsto E\otimes_kF\endaligned$$ is exact and it coincides with  $E\mapsto E\otimes_k f^\stella F$, because $f^\stella$ is $k$-linear; hence $f^\stella F$ is flat over $k$. 
 
Let us prove $(1)\Rightarrow (2)$. Since $f $ is flat  and $\OO_S=f^\stella(k)$, we conclude that $S$ is flat over $k$ by $(a)$.
Moreover, for any $p\in\A^n_{\FF_1}$, $\Kos_\pun(k_{C_p})$ is a resolution of $k_{C_p}$ and then $f^\stella (\Kos_\pun(k_K))$ is a resolution of $\OO_{S_{C_p}}$. We conclude that $D_1,\dots,D_n$ are in general position by equation \eqref{Koszul}. Finally, $D_1,\dots,D_n$ have $k$-flat intersections because $\OO_{\underset{i\notin p}\bigcap D_i}=f^\stella (k_{C_p})$.

Now, let us see $(2)\Rightarrow (1)$.   To prove that $f^\stella$ is exact, we have to prove that any sheaf $F$ on $\A^n_{\FF_1}$ is $f^\stella$-acyclic. We proceed in several steps: 

$(1.0)$ For any $p\in\A^n_{\FF_1}$, and any $k$-module $E$, $E\otimes_k k_{U_p}$ is $f^\stella$-acyclic, because $\LL f^\stella(E\otimes_k k_{U_p})=E\overset\LL\otimes_k f^\stella (k_{U_p}) = E \otimes_k f^\stella (k_{U_p})$, because $f^\stella (k_{U_p})$ is an invertible $\OO_S$-module and then, since $S$ is flat over $k$, it is $k$-flat. 


$(1.1)$ $E\otimes_k k_{C_p}$ is $f^\stella$-acyclic,  for any $k$-module $E$. Indeed, $E\otimes_k  \Kos_\pun(k_{C_p})$ is a resolution of $E\otimes_kk_{C_p}$ by $f^\stella$-acyclic sheaves, so $$\LL f^\stella (E\otimes_k k_{C_p})\simeq  f^\stella(E\otimes_k \Kos_\pun(k_{C_p})) =  E\otimes_k f^\stella( \Kos_\pun(k_{C_p})).$$  By equation \eqref{Koszul}, $f^\stella(\Kos_\pun(k_{C_p}))$ is a resolution of $\OO_{S_K}$, because $D_1,\dots,D_n$ are in general position. Moreover, it is a resolution by locally free $\OO_S$-modules (which are flat over $k$) and $\OO_{S_K}$ is $k$-flat because $D_1,\dots,D_n$ have $k$-flat intersections. Hence $E\otimes_k f^\stella (\Kos_\pun(k_{C_p}))$ is a resolution of $E\otimes_k\OO_{S_K}$ and we are done.


$(1.2)$ For any sheaf $F$, $C^\pun F$ is a finite right resolution of $F$ by $f^\stella$-acyclic sheaves, because each $C^iF$ is a finite product  of sheaves of the form $E\otimes_k k_{C_p}$. Hence $F$ is $f^\stella$-acyclic.


Now, let us prove  ${\rm (b.1)\Leftrightarrow (b.2)}$ when $f $ is flat. For any sheaf $F$ on $\A^n_{\FF_1}$ one has (see \ref{abierto-cerrado}, (b))
\begin{equation}\label{support} \supp(f^\stella F)\subseteq f^{-1}(\overline{\supp(F)}).\end{equation}

${\rm (b.2)\Rightarrow (b.1)}$. If   $F_{\vert V}=0$, then $\supp(F)\subseteq \A^n_{\FF_1}-V$ and then $\overline{\supp(F)})\subseteq \A^n-V$, because $V$ is open. By \eqref{support}, $\supp( f^\stella F)=\emptyset$, i.e., $f^\stella F=0$.

Now let $F$ be a finitely generated sheaf of $k$-modules such that $F_{\vert V}\neq 0$ and let us prove that $f^\stella F\neq 0$. Since $f^\stella (F_{V})=f^\stella F$ (because $f^\stella $ is exact and $f^\stella (F_{\A^n_{\FF_1}-V})=0$), we may assume that $\supp(F)\subseteq V$.

$(*)$ Let us first assume that $k$ is a field. Let $q_1,\dots, q_l$ be the generic points of $\supp(F)$ and let us consider the natural morphism $F\to M=\bigoplus_{q_i} F_{q_i}\otimes_k k_{C_{q_i}}$, whose cokernel is denoted by $N$. Notice that $N_{q_i}=0$. Hence, if $p_1,\dots , p_s$ are the generic points of $\supp(N)$, each $p_i$ is strictly smaller than some $q_j$. Hence $$\supp(f^\stella N)\subseteq f^{-1}( \overline{\supp(N)})\subsetneq \bigcup_{q_i} C_{q_i}= \supp(f^\stella M)$$ and then $f^\stella F\neq 0$, because $f^\stella M\to f^\stella N$ is not an isomorphism.

Now let $k$ be a ring and let $g\colon S\to X=\Spec k$ be the structure morphism. We are going to see that the result holds because it does fibrewise over $X$. For each $x\in X$, let $S_x=g^{-1}(x)$ be the schematic fibre and $i_x\colon S_x\hookrightarrow S$ the natural morphism.
 Let us denote $D_i^x:=D_i\cap S_x$, which are effective divisors on $S_x$, because $D_i\to X$ is  flat.
 
 $S_x$ is a ascheme over $k(x)= $ residue field of $x$. Let      $f_{D^x}\colon S_x\to\A^n_{\FF_1}$ be the algebraic $k(x)$-morphism  associated to the divisors $D_i^x$ on $S_x$;  the associated continuous map is $f_x=i_x\circ f$. Moreover, the intersection $\bigcap_{i\notin p} D_i^x$ is non empty if and only if $\bigcap_{i\notin p} D_i $ is so, because $\bigcap_{i\notin p} D_i\to X$ is faithfully flat. It follows that $\Ima f_x=\Ima f$ and that $D_1^x,\dots,D_n^x$ are in general position, so $f_{D^x}$ is flat. Now let $0\neq F$ be a finitely generated sheaf of $k$-modules supported on $V$. Then $F\otimes_kk(x)\neq 0$ for some $x\in X$,  and $F\otimes_kk(x)$ is supported on $V=\Ima f_x$, so $f_{x}^\stella (F\otimes_kk(x))\neq 0$ by $(*)$. Since   $f_{x}^\stella (F\otimes_kk(x))=i_x^*f^\stella F$, we conclude that $f^\stella F\neq 0$. 
 

Finally, let us prove that ${\rm (b.1)\Rightarrow (b.2)}$. Let $p\in V$. Then $S_{C_p}=\underset{i\notin p}\bigcap D_i$ is not empty by Proposition \ref{generic-open} and $S_{C_p}\to X=\Spec k$ is flat by (2). It only remains to prove that $S_{C_p}\to X$ is surjective. Let $x\in X$. By the hypothesis ${\rm (b.1)}$, $0\neq f^\stella (k(x)\otimes _k k_{C_p})= k(x)\otimes_k \OO_{S_{C_p}}$, so the fibre of $x$ by $S_{C_p}\to X$ is not empty.
\end{proof}

\begin{cor}\label{thetheorem2} Let $S$ be a    scheme over a field $k$ and $f \colon S\to\A^n_{\FF_1}$ the algebraic $k$-morphism defined by a collection  $D=\{ D_1,\dots,D_n\}$ of effective Cartier divisors on $S$. Then
\begin{enumerate} \item $f $ is flat   if and only if $D_1,\dots, D_n$ are in general position.
\item If $f $ is flat, then it is faithfully flat  on $V=\Ima f $, i.e., for any finitely generated sheaf $F\in\Shv_{k-{\rm fg}}(\A^n_{\FF_1})$ one has: $f^\stella (F)=0\Leftrightarrow F_{\vert V}=0$.
\end{enumerate}
\end{cor}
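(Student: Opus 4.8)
The plan is to deduce both statements directly from Theorem \ref{thetheorem} and Proposition \ref{generic-open}, exploiting the drastic simplifications that occur when the base $k$ is a field. The starting observation is that over a field every $k$-module is free, hence flat; consequently every $k$-scheme is automatically flat over $k$, and every closed subscheme automatically has $k$-flat intersections. Thus, in the equivalence $(1)\Leftrightarrow(2)$ of Theorem \ref{thetheorem}, the two conditions ``$S$ is flat over $k$'' and ``$D_1,\dots,D_n$ have $k$-flat intersections'' hold vacuously, so condition $(2)$ collapses to the single requirement that $D_1,\dots,D_n$ be in general position. This immediately yields part $(1)$.

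For part $(2)$, I would invoke the equivalence $\mathrm{(b.1)}\Leftrightarrow\mathrm{(b.2)}$ established in Theorem \ref{thetheorem}, which is available precisely because we are assuming $f$ flat. It reduces faithful flatness on $V$ to verifying that, for each $p\in V$, the scheme $\bigcap_{i\notin p} D_i$ is faithfully flat over $k$. Here the second key simplification enters: a scheme over a field $k$ is faithfully flat over $k$ exactly when it is non-empty, since flatness is automatic and the structure map to the single point $\Spec k$ is faithful (that is, surjective) if and only if its source is non-empty. I would state this elementary characterization explicitly to make the reduction transparent.

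It then remains to check non-emptiness of these intersections, and this is supplied verbatim by Proposition \ref{generic-open}$(2)$: a point $p$ lies in $\Ima f=V$ if and only if $\bigcap_{i\notin p} D_i\neq\emptyset$. Hence for every $p\in V$ the intersection $\bigcap_{i\notin p} D_i$ is non-empty, therefore faithfully flat over $k$, which verifies $\mathrm{(b.2)}$ and gives the desired faithful flatness on $V$.

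Since all the substantive content has already been carried out in Theorem \ref{thetheorem} and Proposition \ref{generic-open}, there is no genuine obstacle in this corollary; it is a pure specialization. The only point demanding a moment's care is the translation of the abstract ``$k$-flat intersection'' and ``faithfully flat over $k$'' hypotheses into, respectively, triviality and non-emptiness over a field, which is exactly what makes the field case so much cleaner than the general noetherian base treated in Theorem \ref{thetheorem}.
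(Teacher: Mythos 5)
Your proposal is correct and is exactly the intended deduction: the paper states Corollary \ref{thetheorem2} without proof precisely because, over a field, flatness over $k$ is automatic and faithful flatness over $k$ reduces to non-emptiness, so conditions $(2)$ and $(\mathrm{b.2})$ of Theorem \ref{thetheorem} collapse as you describe, with Proposition \ref{generic-open}$(2)$ supplying the non-emptiness for $p\in V$. Nothing is missing.
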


\end{exs}

\section{Cohen-Macaulayness on $\A^n_{\FF_1}$}\label{CM-section} In this section we make a brief sheaf-theoretic study of Cohen-Macaulayness on $\A^n_{\FF_1}$, via dualizing complexes, following Grothendieck's ideas (\cite{Hartshorne}). Thus, we shall see that there is a canonical sheaf $\omega_{\A^n_{\FF_1}}$ which is a dualizing sheaf for $D_\text{perf}(\A^n_{\FF_1})$ (i.e., induces reflexivity) and dualizes local cohomology. Then, for any closed subset $K\subseteq \A^n_{\FF_1}$, we shall obtain a canonical complex $\omega_K^\pun$ which induces reflexivity and dualizes local cohomology. Finally we shall prove that classic Cohen--Macaulayness of $K$ (defined in terms of the homology of links) agrees with Grothendieck's. For a more systematic treatment on arbitrary finite posets, see \cite{ST}.

Let us first recall the standard definitions of relative Cohen-Macaulayness on schemes.

\begin{defn}{\rm Let $S$ be a scheme over $k$. $S$ is said to be {\em relatively Cohen-Macaulay} (resp. {\em relatively Gorenstein}) if $S\to\Spec k$ is flat  and has Cohen-Macaulay (resp. Gorenstein) fibres.
 A coherent $\OO_S$-module $\M$ is called {\em relatively Cohen-Macaulay} if it is flat over $k$ and fibrewise Cohen-Macaulay.}
\end{defn}

\begin{rem}\label{CM-schemes}{\rm Let us recall the following well known criterion of Cohen-Macaulayness: Assume  that $S$ is a relatively Gorenstein $k$-scheme and let $Y$ be a closed subscheme of $S$. Then $Y$ is relatively Cohen-Macaulay if and only if it is flat over $k$ and there exists an integer $d$ such that $\HExt^i_{\OO_S}(\OO_Y,\OO_S)=0$ for any $i\neq d$ and $\HExt^d_{\OO_S}(\OO_Y,\OO_S)$ is flat over $k$. More generally, a coherent $\OO_S$-module $\M$ is relatively Cohen-Macaulay   if and only if it is flat over $k$ and there exists an integer $d$ such that $\HExt^i_{\OO_S}(\M,\OO_S)=0$ for any $i\neq d$ and $\HExt^d_{\OO_S}(\M,\OO_S)$ is flat over $k$.}
\end{rem}

\subsubsection{Canonical sheaf of $\A^n_{\FF_1}$}$\,$\medskip

 Let us fix some notations. Let us denote $D(k)$ the derived category of complexes of $k$-modules, $D_c(k)$ the full subcategory of complexes with finitely generated cohomologies and $\Perf (k)$ the full subcategory of perfect complexes of $k$-modules. For any $E\in D(k)$, we shall denote  $$E^\vee:=\RR\Hom_k^\pun(E,k).$$ If $E$ is perfect, then $E^\vee$ is also perfect  and the natural morphism $E\to E^{\vee\vee}$ is an isomorphism. 

We shall denote by $D(\A^n_{\FF_1})$ the derived category of complexes of sheaves of $k$-modules on $\A^n_{\FF_1}$ and $D_c(\A^n_{\FF_1})$ the full subcategory of complexes with finitely generated cohomology sheaves. 
\begin{defn} {\rm A complex $F\in D (\A^n_{\FF_1})$ is called {\em pointwise perfect} if $F_p\in\Perf(k)$ for any $p\in \A^n_{\FF_1}$. Any finitely generated  and flat sheaf  is pointwise perfect (assuming $k$ is noetherian). We shall denote by $D_{\text{perf}}(\A^n_{\FF_1})$ the full subcategory of $D(\A^n_{\FF_1})$ whose objects are the pointwise perfect complexes.}
\end{defn}

\begin{defn}\label{canonical}{\rm The {\em canonical sheaf} of $\A^n_{\FF_1}$ (over $k$) is the sheaf of $k$-modules
\[ \omega_{\A^n_{\FF_1}}:=k_{\{\un\}}.\]}
\end{defn}

For any $F\in D (\A^n_{\FF_1})$ we shall denote $D(F):=\RR\HHom_{\A^n_{\FF_1}}(F,\omega_{\A^n_{\FF_1}})$. There is a natural morphism $F\to D(D(F))$.

\begin{lem}\label{duality-0} For any $p\in\A^n_{\FF_1}$ and any $E\in D(k)$, one has
\begin{enumerate} \item $D(k_{U_p} )\simeq k_{U_{\widehat p}}$, where $\widehat p$ is the complement of $p$.
\item $D(E\otimes_kk_{U_p} )\simeq E^\vee\otimes_k k_{U_{\widehat p}}$.
\end{enumerate}
\end{lem}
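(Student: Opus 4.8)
The plan is to compute both statements stalkwise, reducing everything to one observation: the functor $\HHom_{\A^n_{\FF_1}}(k_{U_p},-)$ is exact and has an explicit stalk description. First I would record the local description of the internal hom on a finite space: for sheaves $F,G$ and any $q$ one has $\HHom(F,G)_q=\Hom_{U_q}(F_{\vert U_q},G_{\vert U_q})$, and the analogous formula with $\RR\Hom$ for $\RR\HHom$. Applying this to $F=k_{U_p}$ and using $(k_{U_p})_{\vert U_q}=k_{U_{p+q}}$ (from $U_p\cap U_q=U_{p+q}$) together with the representability $\Hom_{U_q}(k_{U_r},-)=(-)_r$ valid for $r\geq q$, I obtain $\HHom(k_{U_p},G)_q=G_{p+q}$ for every sheaf $G$. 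Since the stalk functor at $p+q$ is exact, $\HHom(k_{U_p},-)$ is exact, and therefore $\RR\HHom(k_{U_p},\omega_{\A^n_{\FF_1}})=\HHom(k_{U_p},\omega_{\A^n_{\FF_1}})$, a sheaf placed in degree $0$.

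For (1) I would then evaluate the stalk: $D(k_{U_p})_q=(\omega_{\A^n_{\FF_1}})_{p+q}=(k_{\{\un\}})_{p+q}$, which is $k$ when $p+q=\un$ and $0$ otherwise. The combinatorial translation is $p+q=p\cup q=\Delta_n\iff \widehat p=\Delta_n-p\subseteq q\iff q\geq\widehat p\iff q\in U_{\widehat p}$. Matching the transition maps, which are the identity $k\to k$ on the open up-set $U_{\widehat p}$ (note that $q\leq q'$ with $q\in U_{\widehat p}$ forces $q'\in U_{\widehat p}$), identifies $D(k_{U_p})$ with the constant sheaf $k_{U_{\widehat p}}$.

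For (2) I would deduce the statement from (1) by a stalkwise derived tensor--hom adjunction. Namely, for any $E\in D(k)$ and any sheaf $F$ there is a natural isomorphism $\RR\HHom(E\otimes_k F,G)\simeq \RR\Hom_k^\pun(E,\RR\HHom(F,G))$, obtained at each stalk from $\RR\Hom_{U_q}(E\otimes_k H,-)=\RR\Hom_k(E,\RR\Hom_{U_q}(H,-))$ applied to the restrictions. Taking $F=k_{U_p}$, $G=\omega_{\A^n_{\FF_1}}$ and invoking (1) yields $D(E\otimes_k k_{U_p})\simeq \RR\Hom_k^\pun(E,k_{U_{\widehat p}})$; since $k_{U_{\widehat p}}$ has free rank-one stalks, applying $\RR\Hom_k(E,-)$ stalkwise gives $E^\vee=\RR\Hom_k^\pun(E,k)$ on $U_{\widehat p}$ and $0$ elsewhere, that is, $E^\vee\otimes_k k_{U_{\widehat p}}$. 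I would emphasize that this argument needs no perfectness hypothesis on $E$, so it covers arbitrary $E\in D(k)$.

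The main obstacle I anticipate is purely foundational rather than combinatorial: cleanly justifying the reduction of $\RR\HHom$ to the underived stalk formula, i.e. the compatibility of $\RR\HHom$ with restriction to the minimal opens $U_q$ on a finite space, and the exactness of $\HHom(k_{U_p},-)$ (equivalently, that $k_{U_p}$ behaves projectively for the internal hom). Once these standard properties of sheaves on finite posets are in place, both isomorphisms reduce to the single bookkeeping fact about when $p\cup q=\Delta_n$, and the restriction maps are seen to agree on the nose.
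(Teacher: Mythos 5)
Your proof is correct and follows essentially the same route as the paper: both arguments reduce $\RR\HHom(k_{U_p},-)$ to the stalk formula $G_{p+q}$ via the restriction $(k_{U_p})_{\vert U_q}=k_{U_{p+q}}$, translate $p+q=\un$ into $q\in U_{\widehat p}$, and deduce (2) from (1) by the tensor--hom adjunction $D(E\otimes_k k_{U_p})\simeq\RR\HHom^\pun(E,D(k_{U_p}))$ computed stalkwise. The only cosmetic difference is that the paper first exhibits the global comparison morphism (from $k_{U_{\widehat p}}\otimes_k k_{U_p}=k_{\{\un\}}$, resp.\ $E^\vee\otimes_kD(k_{U_p})\to D(E\otimes_kk_{U_p})$) and then checks it stalkwise, whereas you compute stalks directly and match transition maps afterwards.
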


\begin{proof} (1) Since $p+\widehat p=\un$, we have an isomorphism $k_{U_{\widehat p}}\otimes_k k_{U_p}= k_{\{\un\}}$ and then a morphism  $k_{U_{\widehat p}}\to D(k_{U_p})$. Let us see that it is an   isomorphism stalkwise; for any $x\in \A^n_{\FF_1}$ one has
\[ \aligned D(k_{U_p})_x & = \RR\Hom_{U_x}^\pun((k_{U_p})_{\vert U_x}, k_{\{\un\}})= \RR\Hom_{U_x}^\pun(k_{U_{p+x}} , k_{\{\un\}})= (k_{\{\un\}})_{p+x}\\ &=\left\{\aligned k, &\text{ if } p+x=\un\\ 0, &\text{ otherwise}\endaligned \right. = \left\{\aligned k, &\text{ if } x\geq \widehat p\\ 0, &\text{ otherwise}\endaligned \right. =(k_{U_{\widehat p}})_x.    \endaligned\]

(2) One has a natural morphism $ E^\vee\otimes_k D(k_{U_p})\to D(E\otimes_kk_{U_p} ) $. Let us see that it is an isomorphism stalkwise. Since $D(E\otimes_k k_{U_p})\simeq \RR\HHom_{\A^n_{\FF_1}}^\pun(E,D(k_{U_p}))$, one has
\[\aligned  D(E\otimes_kk_{U_p} )_x&  = \RR\Hom_{U_x}^\pun(E, (k_{U_{\widehat p}})_{\vert U_x}) = \RR\Hom_k^\pun(E, (k_{U_{\widehat p}})_x)\\ & = 
 \left\{\aligned E^\vee, &\text{ if } x\in U_{\widehat p}\\ 0, &\text{ otherwise}\endaligned \right. = (E^\vee\otimes_k k_{U_{\widehat p}})_x.\endaligned\]
\end{proof}

\begin{prop}\label{duality-1}   For any $F\in D_\text{\rm perf}(\A^n_{\FF_1})$ one has: \begin{enumerate} \item $D(F)\in D_\text{\rm perf}(\A^n_{\FF_1})$. \item The natural morphism
\[ F\to D(D(F))\] is an isomorphism.\end{enumerate}

If $k$ is Gorenstein  of finite Krull dimension, the same statements hold for  $ D_c(\A^n_{\FF_1})$. 
\end{prop}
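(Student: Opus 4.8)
The plan is to reduce every statement to the building blocks $E\otimes_k k_{U_p}$ treated in Lemma \ref{duality-0}, and then to propagate the conclusions along a finite resolution, using that $D=\RR\HHom_{\A^n_{\FF_1}}(-,\omega_{\A^n_{\FF_1}})$ is a contravariant triangulated functor. First I would note that every $F\in D_{\text{perf}}(\A^n_{\FF_1})$ is quasi-isomorphic to a finite complex each of whose terms is a finite direct sum of sheaves $F_{p_0}\otimes_k k_{U_{p_i}}$: one applies the (homological) standard resolution $C_\pun$ termwise to $F$ and totalizes, which is legitimate because each functor $C_i$ is exact (stalks are exact and every $k_{U_q}$ is $k$-flat, so no derived correction is needed). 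Since $F$ is pointwise perfect and $\A^n_{\FF_1}$ is finite, $F$ is bounded and each stalk $F_{p_0}$ lies in $\Perf(k)$; hence every term of this resolution is a finite direct sum of building blocks $E\otimes_k k_{U_q}$ with $E\in\Perf(k)$.

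For part (1) I would apply $D$ to this resolution. As $D$ is triangulated, $D(F)$ is a finite iterated extension of the objects $D(E\otimes_k k_{U_q})$, which by Lemma \ref{duality-0}(2) equal $E^\vee\otimes_k k_{U_{\widehat q}}$. Since $E\in\Perf(k)$ forces $E^\vee\in\Perf(k)$ and $k_{U_{\widehat q}}$ has stalks $k$ or $0$, each such object is pointwise perfect; as pointwise perfect complexes form a triangulated subcategory closed under extensions and shifts, we conclude $D(F)\in D_{\text{perf}}(\A^n_{\FF_1})$.

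For part (2), the morphism $F\to D(D(F))$ is a natural transformation compatible with triangles and finite direct sums, so it suffices to check it is an isomorphism on a single building block $E\otimes_k k_{U_p}$. Applying Lemma \ref{duality-0}(2) twice, together with $\widehat{\widehat p}=p$, gives $D(D(E\otimes_k k_{U_p}))\simeq E^{\vee\vee}\otimes_k k_{U_p}$, and one checks that under this identification the biduality map becomes $E\to E^{\vee\vee}$ tensored with $k_{U_p}$; this is an isomorphism precisely because $E$ is perfect. A finite induction along the resolution (d\'evissage) then transports the isomorphism to all of $D_{\text{perf}}(\A^n_{\FF_1})$.

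Finally, for the Gorenstein statement the only modification is the algebraic input about $k$: when $k$ is Gorenstein of finite Krull dimension, $k$ is a dualizing complex for itself, so $\RR\Hom_k^\pun(-,k)$ is a duality on $D_c^b(k)$, i.e. $E^\vee\in D_c^b(k)$ and $E\to E^{\vee\vee}$ is an isomorphism there. Rerunning the argument with $\Perf(k)$ replaced by $D_c^b(k)$—the stalks of $F\in D_c(\A^n_{\FF_1})$ now lying in $D_c^b(k)$, while finiteness of $\dim\A^n_{\FF_1}$ and of the injective dimension of $k$ keep all complexes bounded—yields both statements. I expect the only real obstacle to be the bookkeeping of this d\'evissage, namely verifying that the standard resolution totalizes correctly for a complex and that applying the contravariant functor $D$ term by term genuinely reconstructs $D(F)$; the substantive content is entirely carried by Lemma \ref{duality-0} and by the self-duality of $k$ (perfectness in the first case, the Gorenstein/finite-Krull-dimension hypothesis in the second).
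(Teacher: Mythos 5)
Your proposal is correct and follows essentially the same route as the paper: reduce to the building blocks $E\otimes_k k_{U_p}$ via Lemma \ref{duality-0}, then propagate along the finite standard resolution $0\to C_nF\to\cdots\to C_0F\to F\to 0$, with the Gorenstein hypothesis supplying the self-duality of $D_c(k)$ in the second case. The extra bookkeeping you flag (totalization, naturality of the biduality map on the blocks) is exactly what the paper leaves implicit.
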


\begin{proof} If $F=E\otimes_k k_{U_p}$ for some $E\in \Perf(k)$, $p\in \A^n_{\FF_1}$, the result follows from Lemma \ref{duality-0}. Now, for a general $F\in D_\text{perf}(\A^n_{\FF_1})$, the result holds for $C_iF$, $i=0,\dots ,n$, because $C_iF$ is a finite direct sum of complexes of type $E\otimes_kk_{U_p} $, $E\in \Perf(k)$. One concludes from the exact sequence of complexes $0\to C_nF\to\cdots\to C_0F\to F\to 0$. The same proof works for $F\in D_c(\A^n_{\FF_1})$ if $k$ is Gorenstenin of finite Krull dimension, because, in this case,  for any $E\in D_c(k)$ one has that $E^\vee\in D_c(k)$   and $E\to E^{\vee\vee}$ is an isomorphism. 
\end{proof}

\begin{prop} \label{duality-2} Let $F\in D(\A^n_{\FF_1}) $. One has:
\medskip

{\rm (1)} $\RR\Hom^\pun_{\A^n_{\FF_1}}(F, \omega_{\A^n_{\FF_1}})=   \RR\Gamma_\0(\A^n_{\FF_1},F)^\vee[-n]$. 

Hence, if $F$ is a sheaf and $k$ is a field: $$\Ext^i_{\A^n_{\FF_1}}(F,\omega_{\A^n_{\FF_1}})=H^{n-i}_\0(\A^n_{\FF_1},F)^*.$$ 

{\rm (2)} For any $p\in\A^n_{\FF_1}$ one has
\[ \left[ \RR\HHom^\pun_{\A^n_{\FF_1}} (F,\omega_{\A^n_{\FF_1}})\right]_p= \RR\Gamma_p(U_p,F)^\vee [-d_p] .\]
with $d_p=\dim U_p$. If $F$ is a sheaf and $k$ is a field,
\[ \left[\HExt^i_{\A^n_{\FF_1}}(F,\omega_{\A^n_{\FF_1}})\right]_p= H^{d_p-i}_p(U_p,F)^*.\]
\end{prop}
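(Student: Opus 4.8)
The plan is to reduce both statements to the local computation already packaged in Lemma \ref{duality-0}, using the standard resolutions as the bridge between the global/stalkwise duality and the explicit behaviour on the generators $k_{U_p}$. For part (1), I would first observe that $\RR\Hom^\pun_{\A^n_{\FF_1}}(F,\omega_{\A^n_{\FF_1}})=\RR\Hom^\pun_{\A^n_{\FF_1}}(F,k_{\{\un\}})$, and since $\{\un\}$ is the single open point with smallest open set $U_{\un}=\{\un\}$, computing maps into $k_{\{\un\}}$ only sees the stalk of $F$ at $\un$ after passing through the sections-with-support functor. Concretely, I would use the identity $\RR\Hom^\pun_{\A^n_{\FF_1}}(F,k_{\{\un\}})=\RR\Hom^\pun_k(\RR\Gamma_\0(\A^n_{\FF_1},F),k)[-n]$, which I expect to prove by taking the standard resolution $C_\pun F$ (whose terms are finite sums of $E\otimes_k k_{U_p}$) and applying Lemma \ref{duality-0}(2) termwise: one gets $D(C_iF)$ as a sum of $E^\vee\otimes_k k_{U_{\widehat p}}$, and the resulting complex computes the reduced local cohomology at $\0$ up to the shift $[-n]$ coming from the top dimension $n=\dim\A^n_{\FF_1}$. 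The duality $\RR\Gamma(X,k)=\RR\Hom^\pun_k(\LL(X,k),k)$ recalled in the preliminaries, together with the identification of $D_{\A^n_{\FF_1}}=k_{\{\0\}}$ from \ref{basiccohomologies}, is what pins down the local cohomology $\RR\Gamma_\0$ on the nose.

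For part (2), the key reduction is that duality commutes with restriction to the open set $U_p$ in the appropriate sense: for any $F$ one has $\bigl[\RR\HHom^\pun_{\A^n_{\FF_1}}(F,\omega_{\A^n_{\FF_1}})\bigr]_p=\RR\Hom^\pun_{U_p}(F_{\vert U_p},(\omega_{\A^n_{\FF_1}})_{\vert U_p})$, because the stalk of an internal $\RR\HHom$ at $p$ is computed over the smallest open neighbourhood $U_p$. Now $U_p$ is itself homeomorphic to $\A^{d_p}_{\FF_1}$ with $d_p=\dim U_p$, and the restriction of $\omega_{\A^n_{\FF_1}}=k_{\{\un\}}$ to $U_p$ is exactly the canonical sheaf $\omega_{U_p}=k_{\{\un\}}$ of that smaller affine finite space (since $\un\in U_p$ is its unique open point). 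Therefore part (2) is literally part (1) applied to the space $U_p$ in place of $\A^n_{\FF_1}$, giving $\RR\Gamma_p(U_p,F)^\vee[-d_p]$ with the shift $d_p$ rather than $n$; here the closed point of $U_p$ is $p$ itself, so the local cohomology with supports is taken at $p$.

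The final sheaf-level statements, under the hypothesis that $F$ is a sheaf and $k$ is a field, follow by taking cohomology of the derived statements and using that over a field $(-)^\vee$ is just the ordinary $k$-linear dual $(-)^*$ with no higher $\Tr$-terms. For (1) one reads off $\Ext^i_{\A^n_{\FF_1}}(F,\omega_{\A^n_{\FF_1}})=H^{-i}\bigl(\RR\Gamma_\0(\A^n_{\FF_1},F)^\vee[-n]\bigr)=H^{n-i}_\0(\A^n_{\FF_1},F)^*$, and analogously for (2) with $n$ replaced by $d_p$. I expect the main obstacle to be the careful bookkeeping of the degree shift $[-n]$ (respectively $[-d_p]$) and the passage from the global $\RR\Hom$ to the local-cohomology functor $\RR\Gamma_\0$: one must verify that the standard resolution together with Lemma \ref{duality-0}(2) really reproduces the complex computing $\RR\Gamma_\0$ and not merely $\RR\Gamma$, which amounts to checking that the dualized generators $k_{U_{\widehat p}}$ assemble into the cochain complex with supports at the closed point. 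Everything else is a formal consequence of $k$-linearity, Lemma \ref{duality-0}, and the recollection that $U_p\cong\A^{d_p}_{\FF_1}$.
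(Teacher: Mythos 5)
Your reduction of part (2) to part (1) via $U_p\cong\A^{d_p}_{\FF_1}$ and the identification of $(\omega_{\A^n_{\FF_1}})_{\vert U_p}$ with the canonical sheaf of $U_p$ is correct, and it is exactly how the paper uses the statement later (see the proof of Lemma \ref{Ext=homlink}(2)). The problem is part (1), where your route differs from the paper's and leaves the essential step unproved. Applying Lemma \ref{duality-0}(2) termwise to $C_\pun F$ and then taking global sections (using that $\RR\Gamma(\A^n_{\FF_1},k_{U_q})=0$ unless $q=\0$) produces the complex whose degree-$i$ term is $\bigoplus_{p_0<\dots<p_{i-1}<\un}F_{p_0}^\vee$, indexed by chains ending at $\un$. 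This is \emph{not} termwise the complex $\RR\Gamma_\0(\A^n_{\FF_1},F)^\vee[-n]$: already for $n=2$ the latter has terms $F_\un^\vee$, $F_{\{1\}}^\vee\oplus F_{\{2\}}^\vee$, $F_\0^\vee$ in degrees $0,1,2$, while yours has an extra $F_\0^\vee$ in degree $1$ and two copies of $F_\0^\vee$ in degree $2$, and the discrepancy is not a split acyclic summand. So the sentence ``the resulting complex computes the reduced local cohomology at $\0$ up to the shift $[-n]$'' is precisely the content of the proposition, not a consequence of the tools you cite; neither the duality $\RR\Gamma(X,k)=\RR\Hom_k^\pun(\LL(X,k),k)$ nor the identity $D_{\A^n_{\FF_1}}=k_{\{\0\}}$ (which governs maps into $k_{\{\0\}}$, not into $k_{\{\un\}}$) supplies the needed quasi-isomorphism. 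You correctly flag this as the main obstacle, but you do not close it.

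For comparison, the paper avoids the resolution entirely. It dualizes along the dévissage $0\to F_{\PP^{n-1}_{\FF_1}}\to F\to F_{\{\0\}}\to 0$ and matches the resulting triangle against the dual of the local cohomology triangle $\RR\Gamma_\0\to\RR\Gamma(\A^n_{\FF_1},\cdot)\to\RR\Gamma(\PP^{n-1}_{\FF_1},\cdot)$. The two outer terms are then handled by (a) Poincar\'e--Verdier duality on $\PP^{n-1}_{\FF_1}$, whose global dualizing complex is $k_{\{\un\}}[n-1]$, and (b) the explicit computation $\RR\HHom^\pun_{\A^n_{\FF_1}}(k_{\{\0\}},\omega_{\A^n_{\FF_1}})=k_{\{\0\}}[-n]$ (done via the local cohomology of $\A^n_{\FF_1}-\un$ and the cohomology of $\HH^{n-2}$), combined with adjunction and $D_{\A^n_{\FF_1}}=k_{\{\0\}}$. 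If you want to salvage your approach, you would need to supply an explicit quasi-isomorphism between the ``chains ending at $\un$'' complex and the dual \v{C}ech-type complex computing $\RR\Gamma_\0$, which amounts to re-proving this dévissage by hand; adopting the paper's triangle comparison is substantially shorter.
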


\begin{proof} On the one hand, from the exact sequence $0\to F_{\PP^{n-1}_{\FF_1}}\to F\to F_{\{\0\}}\to 0$ we obtain the exact triangle
\begin{equation} \label{eq} \RR\Hom^\pun_{\A^n_{\FF_1}}( F , \omega_{\A^n_{\FF_1}})\to  \RR\Hom^\pun_{\A^n_{\FF_1}}( F_{\PP^{n-1}_{\FF_1}} , \omega_{\A^n_{\FF_1}})\to \RR\Hom^\pun_{\A^n_{\FF_1}} ( F_{\{\0\}}, \omega_{\A^n_{\FF_1}})[1].\end{equation}
On the other hand, from the exact triangle of local cohomology $$\RR\Gamma_\0(\A^n_{\FF_1},F)\to \RR\Gamma (\A^n_{\FF_1},F)\to \RR\Gamma (\PP^{n-1}_{\FF_1},F)$$ one obtains the exact triangle
\begin{equation}\label{eq2} \RR\Gamma_\0(\A^n_{\FF_1},F)^\vee[-1]\to \RR\Gamma (\PP^{n-1}_{\FF_1},F)^\vee\to \RR\Gamma (\A^n_{\FF_1},F)^\vee.\end{equation} Comparing \eqref{eq} and \eqref{eq2}, we are reduced to prove:

(a) $\RR\Gamma (\PP^{n-1}_{\FF_1},F)^\vee[1-n]=\RR\Hom^\pun_{\A^n_{\FF_1}}( F_{\PP^{n-1}_{\FF_1}} , \omega_{\A^n_{\FF_1}})$.

(b) $\RR\Gamma (\A^n_{\FF_1},F)^\vee[-n]= \RR\Hom^\pun_{\A^n_{\FF_1}}( F_{\{\0\}}, \omega_{\A^n_{\FF_1}})$.

For (a), $$\RR\Hom_{\A^n_{\FF_1}}^\pun( F_{\PP^{n-1}_{\FF_1}} , \omega_{\A^n_{\FF_1}})= \RR\Hom_{\PP^{n-1}_{\FF_1}}^\pun( F_{\vert \PP^{n-1}_{\FF_1}} , k_{\{\un\}}) = \RR\Hom_k^\pun(\RR\Gamma(\PP^{n-1}_{\FF_1},F),k)[1-n],$$  where the last equality follows from duality, because $D_{\PP^{n-1}_{\FF_1}}=k_{\{\un\}}[n-1]$.

For (b), let us first see that \begin{equation}\label{eq3}\RR\HHom_{\A^n_{\FF_1}}^\pun(k_{\{\0\}}, \omega_{\A^n_{\FF_1}})=k_{\{\0\}}[-n].\end{equation}  It is immediate that $\RR\HHom_{\A^n_{\FF_1}}^\pun(k_{\{\0\}}, \omega_{\A^n_{\FF_1}})$ is supported at $\0$, so we are reduced to prove that the stalk at $\0$ is $k[-n]$. Since the stalk at $\0$ are the global sections, we have to prove that $\RR\Hom_{\A^n_{\FF_1}}^\pun(k_{\{\0\}}, \omega_{\A^n_{\FF_1}})=k[-n]$, i.e., $\RR\Gamma_\0(\A^n_{\FF_1},\omega_{\A^n_{\FF_1}})=k[-n]$. Form the exact sequence $0\to \omega_{\A^n_{\FF_1}}\to k\to k_{\A^n_{\FF_1}-\un}\to 0$ we obtain that $\RR\Gamma_\0(\A^n_{\FF_1},\omega_{\A^n_{\FF_1}})= \RR\Gamma_\0(\A^n_{\FF_1},k_{\A^n_{\FF_1}-\un})[-1]$, because $\RR\Gamma_\0(\A^n_{\FF_1},k )=0$ (since $\A^n_{\FF_1}$ and $\A^n_{\FF_1}-\0$ are contractible). Now, $\RR\Gamma_\0(\A^n_{\FF_1},k_{\A^n_{\FF_1}-\un})= \RR\Gamma_\0(\A^n_{\FF_1}-\un,k ) $ which is computed from the exact triangle of local cohomology (recall that $\HH^{n-2}=\A^n_{\FF_1}-\{\0,\un\}$)
\[ \RR\Gamma_\0(\A^n_{\FF_1}-\un,k )\to \underset {\underset {\displaystyle k}\parallel}{\RR\Gamma (\A^n_{\FF_1}-\un,k )} \to  \underset {\underset {\displaystyle k\oplus k[2-n]}\parallel} {\RR\Gamma (\HH^{n-2},k )} \] which yields that $ \RR\Gamma_\0(\A^n_{\FF_1}-\un,k )=k[1-n]$ and then \eqref{eq3} is proved.

Now, let us prove (b). Since $F_{\{\0\}}=F\otimes_kk_{\{\0\}}$, one has $$\aligned \RR\Hom^\pun_{\A^n_{\FF_1}}( F_{\{\0\}}, \omega_{\A^n_{\FF_1}})&= \RR\Hom^\pun_{\A^n_{\FF_1}}( F , \RR\HHom_{\A^n_{\FF_1}}^\pun(k_{\{\0\}}, \omega_{\A^n_{\FF_1}}) )\overset{\eqref{eq3}}= \RR\Hom^\pun_{\A^n_{\FF_1}}( F ,k_{\{\0\}})[-n]\\ &= \RR\Hom^\pun_k(F_\0,k)[-n] = \RR\Hom^\pun_k(\RR\Gamma(\A^n_{\FF_1},F),k)[-n]. \endaligned$$

\end{proof}

Using general results of \cite{ST} we obtain the following:

\begin{cor} {\rm (a)} For any open subset $V$ of $\A^n_{\FF_1}$, ${\omega_{\A^n_{\FF_1}}}_{\vert V}=k_{\{\un\}}$ is a canonical sheaf on $V$ (i.e., it satisfies Proposition \ref{duality-1}).

{\rm (b)} For any closed subset $K\overset i\hookrightarrow\A^n_{\FF_1}$, the complex
\[ \omega_K^\pun:=i^{-1}\RR\HHom_{\A^n_{\FF_1}}^\pun(k_K, \omega_{\A^n_{\FF_1}})=\HHom_{\A^n_{\FF_1}}(\Kos_\pun(k_K), \omega_{\A^n_{\FF_1}})_{\vert K}\] satisfies:

{\rm (1)} It is a canonical complex on $K$; that is, for any $F\in D_\text{\rm perf}(K)$, the natural morphism
\[ F\to \RR\HHom_K^\pun(\RR\HHom_K(F,\omega_K^\pun),\omega_K^\pun)\] is an isomorphism.

{\rm (2)} It dualizes local cohomology: for any $F\in D(K)$ one has an isomorphism
\[ \RR\Hom_K^\pun(F,\omega_K^\pun)=  \RR\Hom_k^\pun(\RR\Gamma_\0(K,F),k)[-n].\] 

 Also, the restriction of $\omega_K^\pun$ to $K^*:=K-\0$ is a canonical complex on $K^*$ and (shifted by $[-n]$) it is the global dualizing complex of $K^*$  (i.e., it dualizes cohomology).
\end{cor}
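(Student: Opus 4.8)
The plan is to deduce everything from the two facts already proved on $\A^n_{\FF_1}$---reflexivity (Proposition~\ref{duality-1}) and the local cohomology duality (Proposition~\ref{duality-2})---by transporting them along open and closed immersions, using the six-functor formalism for finite spaces of \cite{ST}. Two principles do the work: (i) the restriction of a canonical complex along an open immersion is again canonical; and (ii) for a closed immersion $i\colon K\hookrightarrow\A^n_{\FF_1}$ one has $\omega_K^\pun=i^{-1}\RR\HHom_{\A^n_{\FF_1}}^\pun(k_K,\omega_{\A^n_{\FF_1}})=i^!\omega_{\A^n_{\FF_1}}$, since $k_K=i_*(k_{\vert K})$ and $i_!=i_*$ for a closed immersion of finite spaces. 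For part (a) I would observe that the claim is stalkwise: $\un$ is the generic point, so $\un\in V$ and $\{\un\}=U_\un$ is open, whence ${\omega_{\A^n_{\FF_1}}}_{\vert V}=k_{\{\un\}}$; and for $p\in V$ one has $U_p\subseteq V$, so the stalk $\RR\HHom^\pun(F,k_{\{\un\}})_p=\RR\Hom_{U_p}^\pun(F_{\vert U_p},k_{\{\un\}})$ is computed entirely inside $V$. Hence Lemma~\ref{duality-0} restricts verbatim to $V$ and the reduction through the standard resolution $C_\pun$ used to prove Proposition~\ref{duality-1} runs word for word, giving reflexivity on $V$.

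For part (b) the engine is the internal-Hom adjunction for the closed immersion $i$, namely $\RR\HHom_{\A^n_{\FF_1}}^\pun(i_*G,\omega_{\A^n_{\FF_1}})=i_*\RR\HHom_K^\pun(G,\omega_K^\pun)$ (the sheafy form of $i_!\dashv i^!$). This says that $i_*$ intertwines the duality functors on $K$ and on $\A^n_{\FF_1}$. As $i_*$ is exact, fully faithful ($i^{-1}i_*=\id$), and carries $D_\text{\rm perf}(K)$ into $D_\text{\rm perf}(\A^n_{\FF_1})$ (the stalks of $i_*F$ are those of $F$ or $0$, hence perfect), I would apply the biduality of Proposition~\ref{duality-1} to $i_*F$ and then $i^{-1}$ to obtain $F\overset\sim\longrightarrow\RR\HHom_K^\pun(\RR\HHom_K^\pun(F,\omega_K^\pun),\omega_K^\pun)$; this is (1). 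For (2) I would use the global adjunction $\RR\Hom_K^\pun(F,\omega_K^\pun)=\RR\Hom_{\A^n_{\FF_1}}^\pun(i_*F,\omega_{\A^n_{\FF_1}})$, then Proposition~\ref{duality-2}(1) to rewrite the right-hand side as $\RR\Gamma_\0(\A^n_{\FF_1},i_*F)^\vee[-n]$, and finally the identity $\RR\Gamma_\0(\A^n_{\FF_1},i_*F)=\RR\Gamma_\0(K,F)$, which follows by comparing the two local cohomology triangles: they share the terms $\RR\Gamma(K,F)$ and $\RR\Gamma(K^*,F_{\vert K^*})$ because $\0\in K$.

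For the last assertion, $K^*=K-\0$ is open in $K$, so $(\omega_K^\pun)_{\vert K^*}$ is canonical by principle (i) (equivalently, by the argument of part (a) applied to $K$). To identify it with the global dualizing complex I would factor the inclusion as $K^*\hookrightarrow\PP^{n-1}_{\FF_1}=\A^n_{\FF_1}-\0$ and use that restriction commutes with $\RR\HHom$ along the open immersion $\PP^{n-1}_{\FF_1}\hookrightarrow\A^n_{\FF_1}$, so that $\omega_{K^*}^\pun=(i')^!\bigl({\omega_{\A^n_{\FF_1}}}_{\vert\PP^{n-1}_{\FF_1}}\bigr)$ with $i'\colon K^*\hookrightarrow\PP^{n-1}_{\FF_1}$. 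Since ${\omega_{\A^n_{\FF_1}}}_{\vert\PP^{n-1}_{\FF_1}}=k_{\{\un\}}=D_{\PP^{n-1}_{\FF_1}}[1-n]$ by \ref{basiccohomologies}, and $(i')^!$ sends global dualizing complexes to global dualizing complexes, one gets $\omega_{K^*}^\pun=D_{K^*}[1-n]$; that is, it dualizes cohomology, up to the shift fixed by the dimension of $\PP^{n-1}_{\FF_1}$. As a check, this also drops out of (2): $\RR\Hom_{K^*}^\pun(F,\omega_{K^*}^\pun)=\RR\Hom_K^\pun(j_!F,\omega_K^\pun)=\RR\Gamma_\0(K,j_!F)^\vee[-n]$, and since $K$ has minimum $\0$ the functor $\RR\Gamma(K,-)$ is the exact stalk-at-$\0$ functor, so $\RR\Gamma(K,j_!F)=0$ and the local cohomology triangle gives $\RR\Gamma_\0(K,j_!F)=\RR\Gamma(K^*,F)[-1]$, recovering $\RR\Gamma(K^*,F)^\vee[1-n]$.

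The main obstacle is part (b)(1): one has to install the closed-immersion internal-Hom adjunction in the finite-space setting and verify that the canonical biduality morphism on $K$ is exactly the one obtained by applying $i^{-1}$ to the biduality morphism for $i_*F$ on $\A^n_{\FF_1}$, so that Proposition~\ref{duality-1} truly applies; one also needs that $i_*$ preserves pointwise perfectness. Granting these compatibilities---all part of the dualizing-complex formalism of \cite{ST}---parts (a), (b)(2) and the $K^*$ statement are formal, reducing to Propositions~\ref{duality-1} and \ref{duality-2} together with the localization and shift bookkeeping.
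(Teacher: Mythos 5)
Your argument is correct, and it supplies precisely what the paper leaves implicit: the paper gives no proof of this corollary at all, delegating it to the ``general results of \cite{ST}''. What you do — (a) observing that $D_V(F)_p$ only involves $U_p\subseteq V$, so Lemma \ref{duality-0} and the $C_\pun$-reduction of Proposition \ref{duality-1} restrict to any open $V$; (b) installing $i^!\omega_{\A^n_{\FF_1}}=i^{-1}\RR\HHom_{\A^n_{\FF_1}}^\pun(k_K,\omega_{\A^n_{\FF_1}})$ together with the sheafy adjunction $\RR\HHom_{\A^n_{\FF_1}}^\pun(i_*G,\omega_{\A^n_{\FF_1}})=i_*\RR\HHom_K^\pun(G,\omega_K^\pun)$, then transporting biduality and the local-cohomology duality through the fully faithful, exact, perfectness-preserving $i_*$ — is exactly the localization/closed-immersion formalism that citation stands for, and all the compatibilities you flag do hold for finite posets (in particular $i_*=i_!$ and $U_p\cap K=\emptyset$ for $p\notin K$, so $i_*F$ is pointwise perfect, and $\RR\Gamma_\0(\A^n_{\FF_1},i_*F)=\RR\Gamma_\0(K,F)$ because $\0\in K$). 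One remark on the final assertion: your computation gives $\RR\Hom_{K^*}^\pun(F,\omega_{K^*}^\pun)=\RR\Gamma(K^*,F)^\vee[1-n]$, i.e.\ $D_{K^*}=\omega_{K^*}^\pun[n-1]$, which is consistent with $D_{\PP^{n-1}_{\FF_1}}=k_{\{\un\}}[n-1]$ and with the shift stated later in the projective subsection (where the ambient space is $\A^{n+1}_{\FF_1}$); the parenthetical ``shifted by $[-n]$'' in the corollary appears to be off by one, and your normalization is the right one. So trust your two cross-checking derivations of the $K^*$ statement rather than the literal shift printed in the corollary.
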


The following definition is inspired by its schematic analogue (see Remark \ref{CM-schemes}):

\begin{defn}\label{CM-sheaf} {\rm Let $F$ be a finitely generated sheaf on $\A^n_{\FF_1}$. We say that $F$ is {\em Cohen-Macaulay}  (over $k$) if $F$ is $k$-flat and there exists $d\in\NN$ such that $\HExt^i_{\A^n_{\FF_1}}(F,\omega_{\A^n_{\FF_1}})=0$ for any $i\neq d$ and $\HExt^d_{\A^n_{\FF_1}}(F,\omega_{\A^n_{\FF_1}}) $ is flat over $k$. By Proposition \ref{duality-2},  this is equivalent to say that there exists $d\in\NN$ such that: for any $p\in \A^n_{\FF_1}$, $H^j_p(U_p,F)=0$ for any $j\neq d_p-d$ and  $H^{d_p-d}_p(U_p,F) $ is a flat $k$-module.
    
More generally, if $V$ is an open subset of $\A^n_{\FF_1}$, then we say that $F$ is {\em Cohen-Macaulay  on $V$} (over $k$) if $F_{\vert V}$ is $k$-flat and there exists $d\in \NN$ such that $\HExt^i_{\A^n_{\FF_1}}(F,\omega_{\A^n_{\FF_1}})_{\vert V}=0$ for any $i\neq d$ and $\HExt^d_{\A^n_{\FF_1}}(F,\omega_{\A^n_{\FF_1}})_{\vert V} $ is flat over $k$.}\end{defn}

\subsubsection{Cohen-Macaulayness and links} 
Let $K\subseteq \A^n_{\FF_1}$ be a closed subset. We recall the standard notion of Cohen-Macaulayness of $K$ in terms of the homology of links and see that it coincides with the Cohen-Macaulayness of the sheaf $k_K$.

\begin{defn}\label{CM-link}{\rm For each $p\in K$, the {\em link of $p$} is:
\[ \Link_K(p)=\{ q\in K: q\neq \0, p+q\in K \text{ and } p\cdot q=\0\}\] which is a closed subset of $K^*=K\negmedspace -\negmedspace \0$.

We say that $K$ is {\em Cohen-Macaulay over $k$} if for any $p\in K$ one has:
\[ \widetilde H_i(\Link_K(p),k)=0 \text{ for any } i<\dim\Link_K(p).\]}
\end{defn}

\noindent{\bf Notation.} For each $p\in \A^n_{\FF_1}$, we shall denote $U_p^*:=U_p \negmedspace - \negmedspace \{p\}$.

\begin{lem}\label{link} One has an homeomorphism: $\Link_K(p)\simeq U_p^*\cap K $.
\end{lem}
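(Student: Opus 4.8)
The plan is to exhibit an explicit inverse pair of maps and then use the fact that, for finite posets with the Alexandrov topology, a map is continuous precisely when it is order-preserving; so a bijection whose inverse is also order-preserving is automatically a homeomorphism. Unwinding the definitions, $U_p^*\cap K$ consists of those $r\in K$ with $p\subsetneq r$, while $\Link_K(p)$ consists of those $q\in K$ with $q\neq\0$, $p\cdot q=\0$ and $p+q\in K$. First I would define
\[ \phi\colon \Link_K(p)\longrightarrow U_p^*\cap K,\qquad q\longmapsto p+q, \]
together with its candidate inverse
\[ \psi\colon U_p^*\cap K\longrightarrow \Link_K(p),\qquad r\longmapsto r\setminus p. \]

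Next I would check well-definedness and that these are mutually inverse. For $\phi$: if $q$ lies in the link then $p+q\in K$ by definition, and $p+q\supsetneq p$ because $q\neq\0$ and $p\cdot q=\0$, so $\phi(q)\in U_p^*\cap K$. For $\psi$: if $p\subsetneq r\in K$, then $r\setminus p$ is nonempty, disjoint from $p$, and satisfies $p+(r\setminus p)=r\in K$; moreover $r\setminus p\subseteq r$, so $r\setminus p\in K$ because $K$ is a closed, hence downward-closed, subset of $\A^n_{\FF_1}$. This last point is the only place the closedness of $K$ enters, and it is what guarantees that $\psi$ lands in $\Link_K(p)$. The identities $\psi(\phi(q))=q$ and $\phi(\psi(r))=r$ follow immediately from $p\cdot q=\0$ and $p\subseteq r$ respectively.

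Finally I would verify continuity. Both sides carry the subspace topology induced from $\A^n_{\FF_1}$, which is again the Alexandrov topology of the restricted order $\subseteq$, so it suffices to see that $\phi$ and $\psi$ are order-preserving. Since $q\subseteq q'$ forces $p+q\subseteq p+q'$, and $r\subseteq r'$ forces $r\setminus p\subseteq r'\setminus p$, both maps are monotone, hence continuous; thus $\phi$ is a homeomorphism with inverse $\psi$. I do not expect any genuine obstacle here: the argument is purely combinatorial, and the only care needed is in matching the three defining conditions of the link against the single condition defining $U_p^*\cap K$, which is handled by downward-closedness of $K$.
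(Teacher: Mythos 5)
Your proof is correct and follows essentially the same route as the paper: the paper also exhibits the mutually inverse order-preserving maps $q\mapsto p+q$ and $r\mapsto r-p$ and invokes continuity of monotone maps between finite posets. Your additional check that $r\setminus p\in K$ via downward-closedness of $K$ is the one detail the paper leaves implicit, and you handle it correctly.
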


\begin{proof} The maps 
\[  \aligned \Link_K(p)&\to U_p^*\cap K\\ q&\mapsto p+q\endaligned \qquad \aligned U_p^*\cap K&\to \Link_K(p) \\ q&\mapsto q-p\endaligned\] are continuous (since they are order preserving) and mutually inverse.
\end{proof}

\begin{lem} \label{Ext=homlink} Let $K\subseteq \A^n_{\FF_1} $ be a closed subset. Then
\begin{enumerate} \item
$\RR\Hom^\pun_{\A^n_{\FF_1}}(k_K, \omega_{\A^n_{\FF_1}})=\LL_{\text{\rm red}}(K^*, k)[1-n]$, where $K^*=K\negmedspace-\negmedspace \0$. Hence: $$\Ext^i_{\A^n_{\FF_1}}(k_K,\omega_{\A^n_{\FF_1}})=\widetilde H_{n-i-1}(K^*,k).$$ 
\item For each $p\in K$ one has
\[ (\omega_K^\pun)_p=\left[ \RR\HHom^\pun_{\A^n_{\FF_1}} (k_K,\omega_{\A^n_{\FF_1}})\right]_p=\LL_{\text{\rm red}}(U_p^*\cap K,k)[1-d_p].\]
with $d_p=\dim U_p$. Hence
\[ H^i((\omega_K^\pun)_p)= \left[\HExt^i_{\A^n_{\FF_1}}(k_K,\omega_{\A^n_{\FF_1}})\right]_p= \widetilde H_{d_p-i-1} (U_p^*\cap K,k)\overset{\ref{link}}= \widetilde H_{d_p-i-1} (\Link_K(p),k).\]
\end{enumerate}
\end{lem}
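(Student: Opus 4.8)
The plan is to deduce both statements from the duality formula of Proposition~\ref{duality-2}, combined with the local cohomology triangle and the self-duality of homology and cohomology of the constant sheaf recalled in the preliminaries. For (1), I would first apply Proposition~\ref{duality-2}(1) to $F=k_K$, giving
\[\RR\Hom^\pun_{\A^n_{\FF_1}}(k_K,\omega_{\A^n_{\FF_1}})=\RR\Gamma_\0(\A^n_{\FF_1},k_K)^\vee[-n],\]
so everything reduces to computing $\RR\Gamma_\0(\A^n_{\FF_1},k_K)$. I would use the local cohomology triangle
\[\RR\Gamma_\0(\A^n_{\FF_1},k_K)\to\RR\Gamma(\A^n_{\FF_1},k_K)\to\RR\Gamma(\PP^{n-1}_{\FF_1},k_K).\]
Here $\RR\Gamma(\A^n_{\FF_1},k_K)=\RR\Gamma(K,k)=k$, because a nonempty closed $K$ contains the minimum $\0$ of $\A^n_{\FF_1}$ and is therefore contractible; and $\RR\Gamma(\PP^{n-1}_{\FF_1},k_K)=\RR\Gamma(K^*,k)$ since $\PP^{n-1}_{\FF_1}=\A^n_{\FF_1}-\0$ and $(k_K)_{\vert\PP^{n-1}_{\FF_1}}=k_{K^*}$.

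Next I would dualize this triangle. Applying $(-)^\vee$, which is an anti-equivalence on the perfect $k$-complexes involved and satisfies $\RR\Gamma(K^*,k)^\vee=\LL(K^*,k)$, turns the triangle into
\[\LL(K^*,k)\overset{\phi}\to k\to \RR\Gamma_\0(\A^n_{\FF_1},k_K)^\vee\to\LL(K^*,k)[1],\]
where $\phi$ is the natural augmentation dual to the unit $k\to\RR\Gamma(K^*,k)$. By definition $\operatorname{Cone}(\phi)=\LL_{\text{\rm red}}(K^*,k)[1]$, hence $\RR\Gamma_\0(\A^n_{\FF_1},k_K)^\vee=\LL_{\text{\rm red}}(K^*,k)[1]$ and therefore $\RR\Hom^\pun_{\A^n_{\FF_1}}(k_K,\omega_{\A^n_{\FF_1}})=\LL_{\text{\rm red}}(K^*,k)[1-n]$. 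Taking $H^i$ and converting the cohomological shift into reduced homological degree, $H^{i+1-n}[\LL_{\text{\rm red}}(K^*,k)]=\widetilde H_{n-i-1}(K^*,k)$, yields the $\Ext$ formula. The step requiring care is precisely the identification of the connecting map with the augmentation $\phi$, so that its cone is the \emph{reduced} complex and not $\LL(K^*,k)$ itself; the degenerate case $K=\{\0\}$, where $K^*=\emptyset$, is absorbed by the convention $\widetilde H_{-1}(\emptyset,k)=k$ and is consistent with \eqref{eq3}.

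For (2), I would simply localize statement (1). Since the stalk of an internal $\RR\HHom$ at $p$ is computed over the minimal open neighborhood $U_p$, one has
\[(\omega_K^\pun)_p=[\RR\HHom^\pun_{\A^n_{\FF_1}}(k_K,\omega_{\A^n_{\FF_1}})]_p=\RR\Hom^\pun_{U_p}(k_{K\cap U_p},(\omega_{\A^n_{\FF_1}})_{\vert U_p}).\]
Now $U_p=\{q\geq p\}$ is isomorphic to $\A^{d_p}_{\FF_1}$, with $p$ playing the role of $\0$ and $\un$ that of $\un$ and $d_p=\dim U_p$, and under this isomorphism $(\omega_{\A^n_{\FF_1}})_{\vert U_p}=k_{\{\un\}}$ is exactly the canonical sheaf of $U_p$. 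Applying part (1) on $U_p\cong\A^{d_p}_{\FF_1}$ to the closed subset $K\cap U_p$ gives $\LL_{\text{\rm red}}((K\cap U_p)^*,k)[1-d_p]$, and since $(K\cap U_p)^*=U_p^*\cap K$ this is the first asserted formula. Passing to cohomology as in (1) gives $H^i((\omega_K^\pun)_p)=\widetilde H_{d_p-i-1}(U_p^*\cap K,k)$, and Lemma~\ref{link} identifies $U_p^*\cap K$ with $\Link_K(p)$, completing the proof. Alternatively, one could apply Proposition~\ref{duality-2}(2) directly and run the same local cohomology computation on $U_p$, using that $K\cap U_p$ has minimum $p$.
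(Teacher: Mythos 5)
Your argument is correct and follows essentially the same route as the paper: part (1) is reduced via Proposition \ref{duality-2} to computing $\RR\Gamma_\0(\A^n_{\FF_1},k_K)=\RR\Gamma_\0(K,k)$, the local cohomology triangle is dualized, and the cone of the augmentation $\LL(K^*,k)\to k$ is recognized as $\LL_{\text{\rm red}}(K^*,k)[1]$; part (2) follows by localizing to $U_p\simeq\A^{d_p}_{\FF_1}$ and invoking (1) together with Lemma \ref{link}. The alternative you mention at the end (applying Proposition \ref{duality-2}(2) directly) is in fact exactly the paper's phrasing of step (2), and your attention to the identification of the connecting map with the augmentation and to the degenerate case $K^*=\emptyset$ is a welcome extra check.
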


\begin{proof} (1) By Proposition \ref{duality-2}, we have to prove that $\RR\Gamma_\0(\A^n_{\FF_1},k_K)^\vee\simeq \LL_{\text{\rm red}}(K^*, k)[1]$. One has that $\RR\Gamma_\0(\A^n_{\FF_1},k_K)= \RR\Gamma_\0(K,k )$; taking dual $\,^\vee$ in the triangle of local cohomology
$$ \RR\Gamma_\0(K,k )\to \underset{\underset{\displaystyle k}\parallel}{\RR\Gamma (K,k )}\to \RR\Gamma (K^*,k )$$ we obtain the exact triangle
\[  \LL(K^*,k)\to k\to \RR\Gamma_\0(K,k )^\vee\] and the result follows.

(2) Again, by Proposition \ref{duality-2}, we have to prove that $\RR\Gamma_p(U_p,k_K)^\vee\simeq \LL_{\text{\rm red}}(U_p^*\cap K, k)[1]$. Since $U_p\simeq \A^{d_p}_{\FF_1}$, we conclude by (1).
\end{proof}

\begin{cor}\label{genericCM} If $q$ is a generic point of $K$, then $\left[\HExt^i_{\A^n_{\FF_1}}(k_K,\omega_{\A^n_{\FF_1}})\right]_q=\left\{ \aligned k, &\text{ for } i=d_q\\ 0, &\text{ otherwise}     \endaligned\right.$.
\end{cor}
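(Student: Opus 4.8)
The plan is to reduce everything to the stalkwise formula already established in Lemma \ref{Ext=homlink}(2), which identifies
$\left[\HExt^i_{\A^n_{\FF_1}}(k_K,\omega_{\A^n_{\FF_1}})\right]_q=\widetilde H_{d_q-i-1}(\Link_K(q),k)$.
Thus the entire task is to understand the link $\Link_K(q)$ when $q$ is a generic point of $K$, after which the result will drop out of the convention for the reduced homology of the empty space.

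The key step is the observation that $\Link_K(q)=\emptyset$ whenever $q$ is generic. Recall that a generic point of $K$ is, by definition, a maximal element of the poset $K$. I would use the homeomorphism $\Link_K(q)\simeq U_q^*\cap K$ of Lemma \ref{link}: since $U_q^*=U_q-\{q\}=\{r:r>q\}$, we get $U_q^*\cap K=\{r\in K: r>q\}$, and this is empty precisely because $q$ is maximal in $K$. (The same conclusion can be read off directly from Definition \ref{CM-link}: if $r\in\Link_K(q)$ then $q+r\in K$ with $q+r\geq q$ forces $q+r=q$, i.e. $r\leq q$, while $q\cdot r=\0$ forces $r$ to be disjoint from $q$; together these give $r=\0$, contradicting $r\neq\0$.)

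Finally I would invoke the empty-space convention recorded in the preliminaries, namely $\widetilde H_{-1}(\emptyset,k)=k$ and $\widetilde H_i(\emptyset,k)=0$ for $i\neq -1$. Substituting $\Link_K(q)=\emptyset$ into the formula from Lemma \ref{Ext=homlink}(2) gives $\left[\HExt^i_{\A^n_{\FF_1}}(k_K,\omega_{\A^n_{\FF_1}})\right]_q=\widetilde H_{d_q-i-1}(\emptyset,k)$, which equals $k$ exactly when $d_q-i-1=-1$, that is $i=d_q$, and vanishes otherwise. This is precisely the asserted formula.

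I do not anticipate any genuine obstacle: the whole content lies in the emptiness of the link at a maximal point, which is immediate from maximality, and everything else is a direct appeal to Lemma \ref{Ext=homlink} together with the empty-space convention. The only point deserving care is keeping track that $d_q=\dim U_q$ is computed in $\A^n_{\FF_1}$ (so that $q$ need not be maximal in $\A^n_{\FF_1}$, only in $K$), but this is exactly what makes $U_q^*\cap K$ empty while $U_q^*$ itself may be large.
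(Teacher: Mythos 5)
Your proposal is correct and follows exactly the paper's route: the paper's proof is the one-line observation that $U_q^*\cap K$ is empty for $q$ generic, implicitly combined with Lemma \ref{Ext=homlink}(2) and the convention $\widetilde H_{-1}(\emptyset,k)=k$, which is precisely what you spell out. Your additional verification that the link is empty directly from Definition \ref{CM-link} is a correct (if redundant) cross-check.
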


\begin{proof} If $q$ is a generic point of $K$, then $U_q^*\cap K$ is empty.
\end{proof}

\begin{cor}\label{CM=CM} Let $d=\codim(K):=n-\dim K$. The following conditions are equivalent:
\begin{enumerate} \item $K$ is Cohen-Macaulay over $k$ (Definition \ref{CM-link}).
\item $\HExt^i_{\A^n_{\FF_1}}(k_K,\omega_{\A^n_{\FF_1}})=0$ for any $i\neq d$.
\item[(2')] $\omega_K^\pun=\omega_K [-d]$, for some sheaf $\omega_K$ on $K$.
\item $k_K$ is Cohen-Macaulay over $k$ (Definition \ref{CM-sheaf}).
\end{enumerate} 
\end{cor}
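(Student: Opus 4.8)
The plan is to reduce everything to the stalkwise formula of Lemma \ref{Ext=homlink}(2),
$$[\HExt^i_{\A^n_{\FF_1}}(k_K,\omega_{\A^n_{\FF_1}})]_p=\widetilde H_{d_p-i-1}(\Link_K(p),k),\qquad p\in K,$$
together with the elementary bookkeeping $d_p=\dim U_p=n-|p|$ and $\dim\Link_K(p)=s_p-|p|-1$, where $s_p$ is the largest cardinality of a facet of $K$ containing $p$. Writing $D=\dim K=\max_p s_p$, so that $d=\codim K=n-D$, the homology degree $d_p-i-1$ equals $\dim\Link_K(p)$ precisely at $i=n-s_p$, and $n-s_p=d$ iff $s_p=D$, i.e. iff $p$ lies in a facet of top dimension. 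Since these $\HExt$ sheaves are supported on $K$ (because $K\cap U_p=\emptyset$ for $p\notin K$, as $K$ is a down-set) and $H^i(\omega_K^\pun)=\HExt^i_{\A^n_{\FF_1}}(k_K,\omega_{\A^n_{\FF_1}})_{\vert K}$, the equivalence (2)$\Leftrightarrow$(2') is formal: a complex whose cohomology is concentrated in a single degree $d$ is isomorphic to that cohomology sheaf placed in degree $d$.

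For (3)$\Leftrightarrow$(2) I would use Corollary \ref{genericCM}: at each generic point $q$ of $K$ one has $[\HExt^{d_q}]_q=k\neq 0$ with $d_q=n-|q|$. Thus if (3) holds, with $\HExt^i=0$ for $i\neq d_0$, then $d_0=d_q=n-|q|$ for every facet $q$; hence all facets share the cardinality $n-d_0$, so $D=n-d_0$ and $d_0=d$, giving (2). Conversely, under (2) one has $\HExt^i=0$ for $i\neq d$ by hypothesis, $k_K$ is $k$-flat since its stalks are $0$ or $k$, and it remains to see $\HExt^d$ is $k$-flat. For $p\in K$, by (2) the augmented reduced simplicial chain complex of $\Link_K(p)$ over $k$ (a bounded complex of finite free $k$-modules) has homology only in degree $D-|p|-1$; if this homology is nonzero it must sit in the top degree $\dim\Link_K(p)$, since any higher-degree reduced homology vanishes for dimension reasons. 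Hence $[\HExt^d]_p$ is either zero or the module of top cycles, and splicing the short exact sequences $0\to Z_j\to C_j\to Z_{j-1}\to 0$ upward from the bottom free term shows these top cycles are flat; so $\HExt^d$ is $k$-flat and (3) holds.

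It remains to prove (1)$\Leftrightarrow$(2). The implication (2)$\Rightarrow$(1) is direct: (2) gives $\widetilde H_j(\Link_K(p),k)=0$ for all $j\neq D-|p|-1$, and since $\dim\Link_K(p)\leq D-|p|-1$, every $j<\dim\Link_K(p)$ satisfies $j\neq D-|p|-1$, so Definition \ref{CM-link} holds. The hard part is (1)$\Rightarrow$(2), which I expect to be the main obstacle: granting that (1) forces $K$ to be pure, one has $s_p=D$, hence $n-s_p=d$, for every $p$, and then the top-degree concentration supplied by (1) (vanishing of $\widetilde H_j(\Link_K(p),k)$ in every degree below, and hence different from, $\dim\Link_K(p)=D-|p|-1$) becomes exactly the degree-$d$ vanishing of (2). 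For the purity statement I would argue classically by induction on $\dim K$: links of a complex satisfying (1) again satisfy (1), via the identity $\Link_{\Link_K(p)}(q)=\Link_K(p+q)$; a complex satisfying (1) with $\dim K\geq 2$ is connected, since $\widetilde H_0(K^*,k)=0$ forces $K^*$ to be connected; and connectivity together with the inductively pure vertex links forces adjacent vertices, hence all vertices, to lie in facets of a common dimension, whence purity. This classical purity step is where the real work lies, all other implications being formal consequences of Lemma \ref{Ext=homlink} and Corollary \ref{genericCM}.
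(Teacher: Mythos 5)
Your proof is correct and follows essentially the same route as the paper: both reduce everything to the stalkwise formula of Lemma \ref{Ext=homlink}, use Corollary \ref{genericCM} to force purity and pin down the degree $d$, and establish $k$-flatness of $\HExt^d$ from the fact that the top homology of a bounded complex of finite free $k$-modules with vanishing lower homology is projective. The only difference is that where you sketch the classical inductive proof that a Cohen--Macaulay complex is pure (connectedness plus the link identity), the paper simply cites Bruns--Herzog for that fact.
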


\begin{proof} For each $p\in K$, let us denote $m_p:=\dim (U_p^*\cap K)=\dim \Link_K(p)$. 

(1) $\Rightarrow $ (2). Assume that $K$ is Cohen-Macaulay over $k$. Then $K$ is pure (see \cite{BH}), so  $m_p=d_p-1-d$. Hence, for any $i\neq d$, and any $p\in K$,
\[ \left[\HExt^i_{\A^n_{\FF_1}}(k_K,\omega_{\A^n_{\FF_1}})\right]_p\overset{\ref{Ext=homlink}}= \widetilde H_{m_p+d-i} (U_p^*\cap K,k) =0.\] 

(2) $\Rightarrow $ (3). It is clear that $k_K$ is $k$-flat. It remains to see that $\HExt^d_{\A^n_{\FF_1}}(k_K,\omega_{\A^n_{\FF_1}})$ is flat over $k$; first notice that $K$ is pure by Corollary \ref{genericCM}, so $m_p=d_p-1-d$. Then, the stalk of $\HExt^d_{\A^n_{\FF_1}}(k_K,\omega_{\A^n_{\FF_1}})$ at  a point $p$  coincides with the highest homology group of the link of $p$, so the result follows from the following  fact: for any finite topological space $X$, if $ \widetilde H_{i}(X,k)=0$ for any $i<\dim(X)$, then 
$ \widetilde H_{\dim(X)}(X,k)=$ is a flat $k$-module - in fact, a finite projective $k$-module -, because $\LL_{\text{red}}(X,k)$ is a finite (right)-resolution of $ \widetilde H_{\dim(X)}(X,k)$ by flat $k$-modules (in fact, finite free modules).

(3) $\Rightarrow $ (1). By definition, there exists $d_0$ such that  $\HExt^i_{\A^n_{\FF_1}}(k_K,\omega_{\A^n_{\FF_1}})=0$ for any $i\neq d_0$; by Corollary \ref{genericCM}, $d_0=d_q$ for any generic point $q$ of $K$, so $K$ is pure, $d_0=d$ and $m_p=d_p-1-d$.  Then, for any $p\in K$ and any $i\neq m_p$, one has
\[  \widetilde H_{i} (U_p^*\cap K,k)\overset{\ref{Ext=homlink}}= \left[\HExt^{m_p+d-i}_{\A^n_{\FF_1}}(k_K,\omega_{\A^n_{\FF_1}})\right]_p=0.\] Finally, the equivalence of (2) and (2') is immediate.
\end{proof}

\begin{defn}\label{CM-open} {\rm Let $K\subseteq \A^n_{\FF_1} $ be a closed subset and $V\subseteq \A^n_{\FF_1}$ an open subset. We say that {\em $K$ is   Cohen-Macaulay (over $k$) on $V$} if for any $p\in K\cap V$ one has
\[ \widetilde H_i (\Link_K(p),k)=0,\text{ for any } i<\dim\Link_K(p).\]} \end{defn}

The same proof that Corollary \ref{CM=CM} yields:

\begin{cor}\label{CM=CM2} Let us denote $d=$ codimension of $K\cap V$ in $V$.  The following conditions are equivalent:
\begin{enumerate} \item $K$ is Cohen-Macaulay on $V$ (over $k$).
\item $\HExt^i_{\A^n_{\FF_1}}(k_K,\omega_{\A^n_{\FF_1}})_{\vert V}=0$ for any $i\neq d$.
\item $(\omega_K^\pun)_{\vert K\cap V}$ is a sheaf (shifted by $-d$).
\item $k_K$ is Cohen-Macaulay  on $V$ (over $k$).
\end{enumerate} 
\end{cor}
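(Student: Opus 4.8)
The plan is to run the argument of Corollary \ref{CM=CM} essentially verbatim, restricting every stalkwise computation to the points $p\in K\cap V$. The reason this is legitimate is that Lemma \ref{Ext=homlink}(2) is already a purely local statement: the stalk $[\HExt^i_{\A^n_{\FF_1}}(k_K,\omega_{\A^n_{\FF_1}})]_p=\widetilde H_{d_p-i-1}(\Link_K(p),k)$ depends only on the restriction of $K$ to $U_p$, and since $V$ is open (hence upward closed) one has $U_p\subseteq V$ for every $p\in V$. Moreover $\HExt^i_{\A^n_{\FF_1}}(k_K,\omega_{\A^n_{\FF_1}})$ is supported on $K$ (see \ref{abierto-cerrado}(b)), so conditions (2), (3), (4), which concern the restriction to $V$, are exactly conditions on these stalks for $p\in K\cap V$, and they may be tested against the link data of (1). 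Throughout I write $m_p:=\dim(U_p^*\cap K)=\dim\Link_K(p)$.

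The first point I would record is that the generic points of $K\cap V$ coincide with the generic points of $K$ lying in $V$: if $q$ is maximal in $K\cap V$ and $q<q'$ in $K$, then $q'\geq q$ forces $q'\in V$ by upward closedness, contradicting maximality. Consequently Corollary \ref{genericCM} applies to every generic point $q$ of $K\cap V$ and yields $[\HExt^i]_q=k$ for $i=d_q$ and $0$ otherwise. This is the engine behind purity: whenever (2) or (4) holds on $V$, comparison with \ref{genericCM} forces $d_q=d$ for all generic $q$ of $K\cap V$, i.e.\ $K\cap V$ is pure on $V$, and hence $m_p=d_p-1-d$ for all $p\in K\cap V$. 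It is this equality that identifies the codimension $d$ of $K\cap V$ in $V$ with the common value $d_q=n-|q|$ along facets.

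With this in hand the three implications proceed as in \ref{CM=CM}. For $(1)\Rightarrow(2)$, Cohen--Macaulayness on $V$ gives purity on $V$, hence $m_p=d_p-1-d$; then for $i\neq d$ and $p\in K\cap V$ the stalk $\widetilde H_{m_p+d-i}(\Link_K(p),k)$ vanishes, either by the link hypothesis (when $i>d$, so the index is $<m_p$) or because homology vanishes above the dimension $m_p$ (when $i<d$). For $(2)\Rightarrow(4)$, the restriction of $k_K$ is $k$-flat, purity on $V$ comes from \ref{genericCM} as above, and flatness of the single surviving $\HExt^d_{\vert V}$ follows stalkwise from the general fact that $\LL_{\text{red}}(X,k)$ is a finite complex of free $k$-modules resolving $\widetilde H_{\dim X}(X,k)$ whenever $\widetilde H_i(X,k)=0$ for $i<\dim X$. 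For $(4)\Rightarrow(1)$, one first extracts $d_0$ with all $\HExt^i_{\vert V}$ vanishing for $i\neq d_0$, identifies $d_0=d$ and obtains purity on $V$ via \ref{genericCM}, and then reads off the vanishing of the links from the stalk formula of \ref{Ext=homlink}(2) exactly as before. Finally $(2)\Leftrightarrow(3)$ is formal, since $\omega_K^\pun$ has $i$-th cohomology sheaf $\HExt^i_{\vert K}$ and a complex supported in a single cohomological degree is a shifted sheaf.

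The only genuinely new point compared with \ref{CM=CM}, and the step I would treat most carefully, is the bookkeeping around the phrases ``codimension of $K\cap V$ in $V$'' and ``pure on $V$'': one must check that the openness of $V$ is precisely what makes the maximal elements of $K\cap V$ behave like the generic points of a closed subset, so that Corollary \ref{genericCM} can still be invoked to pin down $d$. Once that identification is made, the whole argument is local at each $p\in K\cap V$, and no global feature of $K$ (such as being a genuine simplicial complex, or pure as a whole) is ever used.
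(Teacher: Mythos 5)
Your proof follows exactly the route the paper takes: the paper's entire proof of this corollary is the sentence ``The same proof that Corollary \ref{CM=CM} yields'', and you have carried out that localization correctly. In particular, the two observations you lead with --- that $U_p\subseteq V$ for every $p\in V$ because $V$ is open (so the stalk formula of Lemma \ref{Ext=homlink}(2) only sees data inside $V$), and that the maximal points of $K\cap V$ are generic points of $K$ lying in $V$ (so Corollary \ref{genericCM} still pins down the degree) --- are precisely what is needed, and they make the implications $(2)\Leftrightarrow(3)$, $(2)\Rightarrow(4)$ and $(4)\Rightarrow(1)$ go through as you describe.

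The one step you yourself flag as delicate, namely that condition (1) forces $K\cap V$ to be pure on $V$, is however asserted rather than proved, and it is the step that actually fails when $\0\notin V$. In Corollary \ref{CM=CM} purity is imported from the classical fact that Cohen--Macaulay complexes are pure, whose proof uses the link of the empty face $\0$ (connectivity of $K^*$ plus induction); Definition \ref{CM-open} imposes no condition at $\0$ when $\0\notin V$, so that argument is unavailable. Concretely, take $n=3$, let $K$ be the complex with facets $\{1\}$ and $\{2,3\}$, and $V=\A^3_{\FF_1}-\0$: every $p\in K\cap V$ has $\Link_K(p)$ either empty or a single point, so (1) holds vacuously, yet by Lemma \ref{Ext=homlink} the stalk of $\HExt^i_{\A^3_{\FF_1}}(k_K,\omega_{\A^3_{\FF_1}})$ is $\widetilde H_{1-i}(\emptyset,k)$ at $p=\{1\}$ (nonzero for $i=2$) and $\widetilde H_{-i}(\emptyset,k)$ at $p=\{2,3\}$ (nonzero for $i=1$), so (2) fails for every choice of $d$. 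Hence $(1)\Rightarrow(2)$ breaks precisely at the purity claim. To be fair, this defect is inherited from the paper's own one-line proof, which silently reuses the purity argument of \ref{CM=CM}; a correct statement would either build purity of $K\cap V$ into condition (1) or assume $\0\in V$, and with that amendment your argument is complete and coincides with the intended one.
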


\section{Main results}\label{Main results}

Let $f\colon S\to \A^n_{\FF_1}$ be an algebraic $k$-morphism. For any $F,G$ in $D(\A^n_{\FF_1})$ we have a natural morphism 
\begin{equation}\label{fstellahom} \LL f^\stella\RR\HHom_{\A^n_{\FF_1}}^\pun(F,G) \to \RR\HHom_{S}^\pun(\LL f^\stella F, \LL f^\stella G) 
\end{equation}
which is not an isomorphism in general (it fails, for example, for $F=k_{U_p}, G=k_{U_q}$, with $p\ngeq q$). However,   it is an isomorphism for any pointwise perfect $F$ if we take $G=\omega_{\A^n_{\FF_1}}$:
 
\begin{thm}\label{extens} Let $S$ be a scheme over $k$ and $f \colon S\to \A^n_{\FF_1}$ an algebraic $k$-morphism. For any pointwise perfect complex $F\in D_\text{\rm perf}(\A^n_{\FF_1})$ one has:
\[ \LL f^\stella \RR\HHom^\pun_{\A^n_{\FF_1}}(F,\omega_{\A^n_{\FF_1}})= \RR\HHom^\pun_{S}(\LL f^\stella F, f^\stella\omega_{\A^n_{\FF_1}}).\] If $f$ is flat, then
\[ f^\stella \HExt^i_{\A^n_{\FF_1}}(F,\omega_{\A^n_{\FF_1}})= \HExt^i_{\OO_S}(f^\stella F, f^\stella \omega_{\A^n_{\FF_1}}).\]
\end{thm}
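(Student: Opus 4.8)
The plan is to show that the canonical morphism \eqref{fstellahom}, specialized to $G=\omega_{\A^n_{\FF_1}}$, is an isomorphism; the passage to $\HExt$ will then be a formal consequence of exactness. First note that $\omega_{\A^n_{\FF_1}}=k_{\{\un\}}=k_{U_\un}$, so $f^\stella\omega_{\A^n_{\FF_1}}=\Lc_\un=\Lc_1\otimes\cdots\otimes\Lc_n$ is invertible; since $k_{U_\un}$ is projective in $\Shv_k(\A^n_{\FF_1})$ (Proposition \ref{augmented-complex}), we have $\LL f^\stella\omega_{\A^n_{\FF_1}}=f^\stella\omega_{\A^n_{\FF_1}}$, which is why the right-hand side is written with $f^\stella$ rather than $\LL f^\stella$.

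Both $\LL f^\stella\RR\HHom^\pun_{\A^n_{\FF_1}}(-,\omega_{\A^n_{\FF_1}})$ and $\RR\HHom^\pun_S(\LL f^\stella -,f^\stella\omega_{\A^n_{\FF_1}})$ are contravariant triangulated functors on $D_\text{\rm perf}(\A^n_{\FF_1})$, and \eqref{fstellahom} is a morphism between them. I would therefore argue exactly as in the proof of Proposition \ref{duality-1}: using the standard resolution $0\to C_nF\to\cdots\to C_0F\to F\to 0$, in which each $C_iF$ is a finite direct sum of sheaves $E\otimes_k k_{U_p}$ with $E=F_{p_0}\in\Perf(k)$ (perfect because $F$ is pointwise perfect), and using that both functors send finite direct sums to finite products and triangles to triangles, one reduces to checking that \eqref{fstellahom} is an isomorphism for $F=E\otimes_k k_{U_p}$ with $E\in\Perf(k)$.

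For such $F$ I would compute both sides explicitly. On the left, Lemma \ref{duality-0} gives $\RR\HHom^\pun_{\A^n_{\FF_1}}(E\otimes_k k_{U_p},\omega_{\A^n_{\FF_1}})=E^\vee\otimes_k k_{U_{\widehat p}}$, hence by $k$-linearity of $\LL f^\stella$ and projectivity of $k_{U_{\widehat p}}$,
\[ \LL f^\stella\RR\HHom^\pun_{\A^n_{\FF_1}}(F,\omega_{\A^n_{\FF_1}})=E^\vee\overset\LL\otimes_k\Lc_{\widehat p}. \]
On the right, $\LL f^\stella F=E\overset\LL\otimes_k\Lc_p$; since $\Lc_p$ is invertible and $\Lc_\un=\Lc_p\otimes\Lc_{\widehat p}$ (because $\un=p+\widehat p$ and $p\cdot\widehat p=\0$),
\[ \RR\HHom^\pun_S(E\overset\LL\otimes_k\Lc_p,\Lc_\un)=\RR\HHom^\pun_S(\pi^*E,\Lc_p^{-1}\otimes\Lc_\un)=\RR\HHom^\pun_S(\pi^*E,\Lc_{\widehat p}), \]
where $\pi\colon S\to\Spec k$ is the structure morphism. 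Finally, $E$ being perfect makes $\pi^*E$ a perfect $\OO_S$-complex with $(\pi^*E)^\vee=\pi^*(E^\vee)$, so $\RR\HHom^\pun_S(\pi^*E,\Lc_{\widehat p})=E^\vee\overset\LL\otimes_k\Lc_{\widehat p}$, matching the left-hand side. It then remains to observe that under these identifications the map \eqref{fstellahom} is the resulting isomorphism, which follows from its naturality together with the explicit description of the maps in Lemma \ref{duality-0}.

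For the last assertion, if $f$ is flat then $f^\stella$ is exact, so $\LL f^\stella=f^\stella$ and $f^\stella$ commutes with the cohomology sheaves $\mathcal H^i$; applying $\mathcal H^i$ to the isomorphism just proved yields $f^\stella\HExt^i_{\A^n_{\FF_1}}(F,\omega_{\A^n_{\FF_1}})=\HExt^i_{\OO_S}(f^\stella F,f^\stella\omega_{\A^n_{\FF_1}})$. The genuine obstacle is not the dévissage but the step that makes the two internal $\RR\HHom$'s agree: the identity holds for $G=\omega_{\A^n_{\FF_1}}$ precisely because $f^\stella$ turns $k_{U_{\widehat p}}$ into the \emph{invertible} module $\Lc_{\widehat p}$, so that $\RR\HHom^\pun_S$ into $\Lc_\un$ collapses to a tensor product. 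This is exactly what fails for a general $G$ (e.g.\ $G=k_{U_q}$ with $p\ngeq q$), and checking that the invertibility of the $\Lc_p$ together with the perfectness of $E$ is what forces the collapse is the heart of the argument.
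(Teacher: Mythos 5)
Your proof is correct and follows essentially the same route as the paper's: reduce via the standard resolution $0\to C_nF\to\cdots\to C_0F\to F\to 0$ to the case $F=E\otimes_k k_{U_p}$ with $E\in\Perf(k)$, then use Lemma \ref{duality-0}, the identity $\Lc_p\otimes\Lc_{\widehat p}=\Lc_\un$, and the perfectness of $E$ to identify both sides with $E^\vee\otimes_k\Lc_{\widehat p}$. The only cosmetic difference is that you compute the right-hand side by twisting by $\Lc_p^{-1}$ whereas the paper rewrites $\Lc_{\widehat p}=\RR\HHom^\pun_S(\Lc_p,\Lc_\un)$ and moves $E^\vee$ inside; these are the same calculation read in opposite directions, and your explicit treatment of the naturality of \eqref{fstellahom} and of the $\HExt^i$ statement is a welcome addition.
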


\begin{proof} Recall that $\omega_{\A^n_{\FF_1}}=k_{\{\un\}}$, so $\LL f^\stella (\omega_{\A^n_{\FF_1}})=  f^\stella (\omega_{\A^n_{\FF_1}})=\Lc_\un$, and for any $p\in\A^n_{\FF_1}$ one has \begin{equation}\label{p+q}\Lc_p\otimes_{\OO_S}\Lc_{\widehat p}=\Lc_\un.\end{equation}

(a) If $F=E\otimes_k k_{U_p}$, with $E\in\Perf(k)$, then 
\[\aligned   \LL f^\stella \RR\HHom^\pun_{\A^n_{\FF_1}}(F,\omega_{\A^n_{\FF_1}})& \overset{\ref{duality-0}}=  \LL f^\stella (E^\vee\otimes_k k_{U_{\widehat p}}) = E^\vee\otimes_k  f^\stella k_{U_{\widehat p}}= E^\vee\otimes_k \Lc_{\widehat p}\\ &  \overset{\eqref{p+q}}= E^\vee\otimes_k\RR\HHom^\pun_{S}(\Lc_p, \Lc_\un)\overset{E\text{ is perfect}}{===} \RR\HHom^\pun_{S}(E \otimes_k\Lc_p, \Lc_\un)\\ & = \RR\HHom^\pun_{S}( \LL f^\stella F, f^\stella\omega_{\A^n_{\FF_1}}).\endaligned\]

Now, for any pointwise perfect $F$, the result holds for $C_iF$ by (a). From the exact sequence of complexes 
\[ 0\to C_nF\to\cdots\to C_0F\to F\to 0\] one concludes.
\end{proof}

The importance of this theorem must not be underestimated. As we shall see, it is a crucial result for the preservation of Cohen--Macaulayness by $f^\stella$ (i.e., for Reisner's Criterion) and it is at the root of  the structure of the canonical complexes of Stanley--Resiner rings.

\begin{rem} {\rm Theorem \ref{extens} still holds if we replace $f$ by $f_U$ (resp. $f_K$) for an open subset $U$ of $\A^n_{\FF_1}$ (resp. a closed subset), taking into account \ref{abierto-cerrado}. In case of an open subset $U$,  $\omega_{\A^n_{\FF_1}}$ must be replaced by $(\omega_{\A^n_{\FF_1}})_{\vert U}$ and in case of a closed subset $K$ by $\omega_K^\pun$.}
\end{rem}

\subsubsection{Reisner's criterion}
 
\begin{thm}\label{ReisnerOnSchemes} Let $S$ be a scheme over $k$, $D=\{ D_1,\dots ,D_n\}$ effective Cartier divisors on $S$ and $f \colon S\to\A^n_{\FF_1}$ the  algebraic $k$-morphism defined by them. Let us denote $V=\Ima f$. Let us assume:

{\rm (a)} $S$ is flat over $k$.

{\rm (b)} $D_1,\dots,D_n$ are in general position and have $k$-flat intersections (Definition \ref{generalposition}).

{\rm (c)} For any $p\in V$, the intersection $\underset{i\notin p}\bigcap D_i$ is faithfully flat over $k$.

Now, let $K\subseteq\A^n_{\FF_1}$ be a closed subset and $S_K=f^{-1}(K)$ the associated closed subscheme of $S$. Let us denote by $d$ the codimension of $K\cap V$ in $V$. The following statements are equivalent:

\begin{enumerate}\item $K$ is Cohen-Macaulay over $k$ on $V$, that is: $$\widetilde H_i(\Link_K(p),k)=0,\text{ for any } p\in K\cap V \text{ and any }  i<\dim\Link_K(p).$$ 
\item $\HExt_{\OO_S}^i(\OO_{S_K},\OO_S)=0$ for any $i\neq d$. 
\end{enumerate}

Moreover, if $S$ is relatively Gorenstein, then these conditions are equivalent to:
\begin{enumerate}
\item[(3)] $S_K$ is relatively Cohen-Macaulay.
\end{enumerate}

More generally, for any finitely generated sheaf $F\in\Shv_{k-\text{\rm fg}}(\A^n_{\FF_1})$,  the following conditions are equivalent:

\begin{enumerate} \item[(1')] $F$ is Cohen-Macaulay  on $V$ (Definition \ref{CM-sheaf}).
\item[(2')] There exists a non negative integer $d_0$, such that $\HExt_{\OO_S}^i(f^\stella F,\OO_S)=0$ for any $i\neq d_0$ and $\HExt_{\OO_S}^{d_0}(f^\stella F,\OO_S)$ is flat over $k$.  
\item[(3')] $f^\stella F$ is relatively Cohen-Macaulay  (assuming that $S$ is relatively Gorenstein).
\end{enumerate}
\end{thm}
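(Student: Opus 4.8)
The whole statement is the transcription, through the duality of Theorem \ref{extens} and faithful flatness, of the combinatorial criterion of Corollary \ref{CM=CM2} and the schematic criterion of Remark \ref{CM-schemes}. By Theorem \ref{thetheorem}, hypotheses (a) and (b) say exactly that $f$ is flat, and (c) makes $f$ faithfully flat on $V$; so $f^\stella$ is exact and $k$-linear and, for finitely generated $G$, $f^\stella G=0\iff G_{\vert V}=0$. I would first extract two principles. Writing $F_V$ for the extension by zero of $F_{\vert V}$, one has $f^\stella F=f^\stella F_V$, since $f^\stella$ annihilates any sheaf supported off $V$. And applying the exact $k$-linear functor $f^\stella=\LL f^\stella$ to $k(x)\overset\LL\otimes_kG$ gives $\Tor^k_j(f^\stella G,k(x))=f^\stella\Tor^k_j(G,k(x))$, which together with faithful flatness on $V$ and the local flatness criterion over the noetherian ring $k$ yields the \emph{flatness principle}: $f^\stella G$ is $k$-flat if and only if $G_{\vert V}$ is $k$-flat.

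Since $\omega_{\A^n_{\FF_1}}=k_{\{\un\}}$ and $f^\stella\omega_{\A^n_{\FF_1}}=\Lc_\un$ is invertible, Theorem \ref{extens} gives, for any pointwise perfect $F$,
\[ \HExt^i_{\OO_S}(f^\stella F,\OO_S)\otimes_{\OO_S}\Lc_\un\cong f^\stella\HExt^i_{\A^n_{\FF_1}}(F,\omega_{\A^n_{\FF_1}}). \]
Twisting by the invertible $\Lc_\un$ being harmless, this and the two principles match, degreewise, the vanishing and the $k$-flatness of $\HExt^i_{\OO_S}(f^\stella F,\OO_S)$ with those of $\HExt^i(F,\omega_{\A^n_{\FF_1}})_{\vert V}$. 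The first block is then immediate. For $F=k_K$ (which is $k$-flat, hence pointwise perfect) and $f^\stella k_K=\OO_{S_K}$, condition (2) becomes $\HExt^i(k_K,\omega_{\A^n_{\FF_1}})_{\vert V}=0$ for $i\neq d$, which is (1) by Corollary \ref{CM=CM2}; moreover $\OO_{S_K}$ is $k$-flat by Theorem \ref{thetheorem}(a), and when (1) holds the surviving term $\HExt^d(k_K,\omega_{\A^n_{\FF_1}})_{\vert V}$ (top homology of the links) is $k$-flat by the proof of Corollary \ref{CM=CM2}, so $\HExt^d(\OO_{S_K},\OO_S)$ is $k$-flat and Remark \ref{CM-schemes} reads (2), in the relatively Gorenstein case, as the relative Cohen--Macaulayness (3) of $S_K$.

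The implication (1')$\Rightarrow$(2') runs identically: by (1'), $F_{\vert V}$ is $k$-flat, so $F_V$ is pointwise perfect with $f^\stella F=f^\stella F_V$, and the translation applied to $F_V$ (using $\HExt^i(F_V,\omega_{\A^n_{\FF_1}})_{\vert V}=\HExt^i(F,\omega_{\A^n_{\FF_1}})_{\vert V}$) gives (2') with $d_0=d$, the top term being $k$-flat by the flatness principle. For the reverse implications I would reduce to the fibres, as in the proof of Theorem \ref{thetheorem}: for each $x\in\Spec k$ the morphism $f_x\colon S_x\to\A^n_{\FF_1}$ is again flat and faithfully flat on $V$, and over the field $k(x)$ every finitely generated sheaf is pointwise perfect, so Theorem \ref{extens}, Corollary \ref{CM=CM2} and Remark \ref{CM-schemes} give the field case of (1')$\iff$(2')$\iff$(3') unconditionally. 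Once $f^\stella F$ is known to be $k$-flat, flat base change identifies the Ext sheaves with those computed fibrewise by the $f_x$, the flatness principle gives $F_{\vert V}$ $k$-flat, and each of (1'), (2'), (3') reduces to its fibrewise counterpart, which coincide; the Gorenstein hypothesis enters only through Remark \ref{CM-schemes} for (3').

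The one non-formal point, which I expect to be the main obstacle, is to show that the Ext hypothesis of (2') forces $f^\stella F$ (equivalently, by the flatness principle, $F_{\vert V}$) to be $k$-flat: a priori $F_{\vert V}$ is not known to be $k$-flat, so $F_V$ need not be pointwise perfect and Theorem \ref{extens} cannot be invoked to begin the translation, so a naive argument is circular. Under the relatively Gorenstein hypothesis I would break this scheme-theoretically: there $\OO_S$ is a relative dualizing complex, biduality holds on $D^b_c(S)$, and (2') presents $\RR\HHom_{\OO_S}(f^\stella F,\OO_S)=\Nc[-d_0]$ with $\Nc$ coherent and $k$-flat; the aim is to deduce, fibrewise over $\Spec k$ and using the $k$-flatness of $\Nc$, that $f^\stella F\overset\LL\otimes_kk(x)$ is a sheaf for every $x$, i.e. $f^\stella F$ is $k$-flat. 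For the bare equivalence (1')$\iff$(2'), where $S$ need not be Gorenstein, I would instead pass to the regular model afforded by the local factorization $f=\pi\circ g$ through $g\colon S\to\A^n_k$ (so $f^\stella=g^\ast\pi^\stella$ by Remark \ref{composition}), run the same flatness argument on the regular scheme $\A^n_k$, and descend $k$-flatness along the flat map $g$. Granting flatness, the translation of the second paragraph turns (2') into the combinatorial condition equivalent to (1') by Corollary \ref{CM=CM2}, closing the cycle.
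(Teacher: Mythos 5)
Your proposal follows essentially the same route as the paper: Theorem \ref{thetheorem} to convert hypotheses (a)--(c) into flatness of $f^\stella$ and faithful flatness on $V$, the two transfer principles ($f^\stella G=0\Leftrightarrow G_{\vert V}=0$ and $f^\stella G$ $k$-flat $\Leftrightarrow G_{\vert V}$ $k$-flat), and then Theorem \ref{extens} together with the invertibility of $f^\stella\omega_{\A^n_{\FF_1}}$ to match the $\HExt$ conditions on $S$ with those on $\A^n_{\FF_1}$, closing with Corollary \ref{CM=CM2} and Remark \ref{CM-schemes}. The paper's own proof is exactly this and nothing more; it simply asserts that the equivalences ``follow from Theorem \ref{extens}.'' The one place where you go beyond the paper is your final paragraph: you correctly observe that in the direction $(2')\Rightarrow(1')$ one cannot invoke Theorem \ref{extens} until $F_{\vert V}$ is known to be $k$-flat (equivalently, pointwise perfect), since over a general noetherian $k$ a finitely generated sheaf need not be pointwise perfect, and $(2')$ does not postulate $k$-flatness of $f^\stella F$. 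The paper is silent on this point (it is vacuous when $k$ is regular, e.g.\ a field, but not in general), so your proposed repair --- biduality over a Gorenstein $S$, or local factorization through $\A^n_k$ plus fibrewise reduction --- is a genuine addition rather than a detour; it would be worth carrying it out in detail, since it is the only non-formal step of the whole argument.
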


\begin{proof} The hypothesis (a)--(c) tell us (Theorem \ref{thetheorem}) that $f$ is flat   and faithfully flat  on $V$. Then, for any finitely generated sheaf $F$ on $\A^n_{\FF_1}$ one has:
\[\aligned  f^\stella F=0&\Leftrightarrow F_{\vert V}=0\\ f^\stella F\text{ is flat over } k &\Leftrightarrow F_{\vert V} \text{ is flat}.\endaligned\] Now, 
 the equivalence of (1') and (2') (and (3') if $S$ is relatively Gorenstein) follows from Theorem \ref{extens}, taking into account that $f^\stella \omega_{\A^n_{\FF_1}}$ is an invertible $\OO_S$-module. This same theorem, together with Corollary \ref{CM=CM2}, give the equivalence of (1) and (2)   (and (3) when $S$ is relatively Gorenstein).
\end{proof}

\begin{rem} {\rm Since Cohen-Macaulayness is a local question, the proof of  Theorem \ref{ReisnerOnSchemes} could have been reduced to the case that the divisors $D_i$ are principal (i.e., $\OO_S(-D_i)$ are trivial) and then, factorizing the morphism $S\to \A^n_{\FF_1}$ through a flat morphism of schemes $S\to \A^n_{k}$ and the ``classic'' morphism $\A^n_k\to \A^n_{\FF_1}$, to reduce the proof to the case $\A^n_k\to\A^n_{\FF_1}$. But in this case, the proof is not simpler that in the general case, so we have directly proved it in the general case.}
\end{rem}

\begin{rem} \label{remark-opensimplicial}{\rm In the case of the classic Stanley--Reisner correspondence we also have the following result. Let $K\subseteq\A^n_{\FF_1}$ be a simplicial complex and $K'$ a subcomplex (i.e., a closed subset). The complement $K-K'$ is an open subset, but it is no longer a simplicial complex. However, it still makes sense our definition of Cohen--Macaulayness: $K-K'$ is Cohen--Macaulay if $K$ is Cohen--Macaulay on $\A^n_{\FF_1}-K'$ (Definition \ref{CM-open}); this amounts to say that the canonical complex $\omega_{K-K'}^\pun:=(\omega_K^\pun)_{\vert K-K'}$ is a sheaf (placed in some degree). Then one has:
\[  K-K' \text{ is Cohen--Macaulay }\Leftrightarrow \text{ the scheme } S_K-S_{K'} \text{ is (relatively) Cohen--Macaulay}.\] Notice that there is a general notion of Cohen--Macaulayness on finite posets, due to Baclawski (\cite{Ba}). This notion, when applied to $K-K'$, does not agree with ours (it is a stronger condition), and hence it does not satisfy the above result. This says that our sheaf-theoretic definiton of Cohen-Macaulayness fits better with Stanley--Reisner theory than Baclawski's (see \cite{ST} for more about the comparison between these Cohen-Macaulayness notions).

 }

\end{rem}

%
%
%
%

\subsubsection{  Canonical complexes of Stanley--Reisner rings}\label{canonicalcomplexsubsection} For each simplical complex $K\subseteq\A^n_{\FF_1}$, we have an exact functor
\[\pi_K^\stella\colon\Shv_k(K)\longrightarrow\Qcoh(S_K).\] 
As a consequence of Theorem \ref{extens} we obtain the following:

\begin{thm}\label{canonicalcomplex} One has:
\[ \pi_K^\stella\,(\omega^\pun_K)=\omega^\pun_{S_K}:= \text{ \rm restriction to } S_K \text{ \rm of  }\RR\HHom_S^\pun(\OO_{S_K},\Omega^n_{\A^n_k/k}),\] which is a (relative) dualizing complex of $S_K$. In particular, if $K$ is Cohen--Macaulay, then
\[ \pi_K^\stella\,(\omega_K) = \omega_{S_K}:=\HExt_S^d(\OO_S,\Omega^n_{\A^n_k/k})\] which is the (relative) canonical sheaf of the (relative) Cohen--Macaulay scheme $S_K$.
\end{thm}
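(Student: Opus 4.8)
The plan is to deduce the statement from Theorem \ref{extens} applied to the classic flat morphism $\pi\colon\A^n_k\to\A^n_{\FF_1}$ and the pointwise perfect sheaf $F=k_K$ (the constant sheaf $k$ on the closed set $K$ is finitely generated and flat, hence pointwise perfect). Since $\pi^\stella$ is exact one has $\LL\pi^\stella=\pi^\stella$ and $\LL\pi^\stella k_K=\pi^\stella k_K=\OO_{S_K}$, so Theorem \ref{extens} gives a canonical isomorphism
\[\pi^\stella\,\RR\HHom_{\A^n_{\FF_1}}^\pun(k_K,\omega_{\A^n_{\FF_1}})\;\simeq\;\RR\HHom_{\A^n_k}^\pun(\OO_{S_K},\pi^\stella\omega_{\A^n_{\FF_1}}).\]
The first auxiliary point is to identify the twist. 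Because $\omega_{\A^n_{\FF_1}}=k_{\{\un\}}=k_{U_\un}$, we get $\pi^\stella\omega_{\A^n_{\FF_1}}=\Lc_\un=\OO_{\A^n_k}(-D_1-\cdots-D_n)$, i.e. the ideal $(x_1\cdots x_n)$; I would identify it with $\Omega^n_{\A^n_k/k}$ via the natural isomorphism sending the generator $x_1\cdots x_n$ to $dx_1\wedge\cdots\wedge dx_n$ (equivalently $\Omega^n=\Omega^n(\log D)\otimes\OO(-D)$, with $\Omega^n(\log D)$ trivialized by $\tfrac{dx_1}{x_1}\wedge\cdots\wedge\tfrac{dx_n}{x_n}$). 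Writing $\mathcal H:=\RR\HHom_{\A^n_{\FF_1}}^\pun(k_K,\omega_{\A^n_{\FF_1}})$, the displayed isomorphism becomes $\pi^\stella\mathcal H=\RR\HHom_{\A^n_k}^\pun(\OO_{S_K},\Omega^n_{\A^n_k/k})$.

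The second step transports this identity down to $S_K$ and matches it with $\pi_K^\stella(\omega_K^\pun)$. The crucial observation is that $\mathcal H$ is supported on $K$: since $K$ is closed, i.e.\ downward closed, for $p\notin K$ one has $U_p\cap K=\emptyset$, hence $(k_K)_{\vert U_p}=0$ and therefore $\mathcal H_p=\RR\Hom_{U_p}^\pun\big((k_K)_{\vert U_p},(\omega_{\A^n_{\FF_1}})_{\vert U_p}\big)=0$. Consequently $\mathcal H=j_*j^{-1}\mathcal H=j_*\,\omega_K^\pun$, where $j\colon K\hookrightarrow\A^n_{\FF_1}$. Applying the commutative square of \ref{abierto-cerrado}(b) gives $\pi^\stella(j_*\omega_K^\pun)={j_S}_*\,\pi_K^\stella(\omega_K^\pun)$, and as ${j_S}_*$ is fully faithful we obtain
\[\pi_K^\stella(\omega_K^\pun)=j_S^{-1}\,\pi^\stella\mathcal H=j_S^{-1}\,\RR\HHom_{\A^n_k}^\pun(\OO_{S_K},\Omega^n_{\A^n_k/k})=\omega_{S_K}^\pun,\]
which is the asserted equality; here $j_S^{-1}$ loses nothing because the cohomology sheaves $\HExt^i_{\A^n_k}(\OO_{S_K},\Omega^n)$ are $\OO_{S_K}$-modules, hence already supported on $S_K$.

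It remains to recognize $\omega_{S_K}^\pun$ as a relative dualizing complex and to treat the Cohen--Macaulay case. For the former I would invoke Grothendieck duality for the closed immersion $j_S\colon S_K\hookrightarrow\A^n_k$ into the smooth $k$-scheme $\A^n_k$: the relative dualizing complex of $\A^n_k/k$ is $\Omega^n_{\A^n_k/k}[n]$, so $j_S^!\big(\Omega^n_{\A^n_k/k}[n]\big)=\omega_{S_K}^\pun[n]$ is the relative dualizing complex of $S_K$. For the Cohen--Macaulay case, Corollary \ref{CM=CM} gives $\omega_K^\pun=\omega_K[-d]$; applying the exact functor $\pi_K^\stella$ together with the main equality (and the flat-case formula $\pi^\stella\HExt^i=\HExt^i(\pi^\stella-,\pi^\stella-)$ of Theorem \ref{extens}) yields $\pi_K^\stella(\omega_K)=\HExt^d_{\A^n_k}(\OO_{S_K},\Omega^n_{\A^n_k/k})_{\vert S_K}$ concentrated in a single degree, which is the relative canonical sheaf of $S_K$, the scheme being relatively Cohen--Macaulay by Theorem \ref{ReisnerOnSchemes}. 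The main obstacle is the bookkeeping of the second step: one must verify that $\mathcal H$ is supported on $K$ so that the descent to $S_K$ is clean (this is exactly where downward-closedness of $K$ is used), and pin down the twist as $\Omega^n_{\A^n_k/k}$, since that identification is what makes the relative dualizing sheaf of $\A^n_k$ appear on the scheme side.
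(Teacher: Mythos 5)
Your proposal is correct and follows essentially the same route as the paper: the paper's proof is precisely the chain ${i_S}_*\pi_K^\stella(\omega_K^\pun)=\pi^\stella i_*(\omega_K^\pun)=\pi^\stella\RR\HHom_{\A^n_{\FF_1}}^\pun(k_K,\omega_{\A^n_{\FF_1}})=\RR\HHom_{\A^n_k}^\pun(\OO_{S_K},\Omega^n_{\A^n_k/k})={i_S}_*\omega_{S_K}^\pun$, i.e.\ Theorem \ref{extens} applied to $F=k_K$ combined with the compatibility square of \ref{abierto-cerrado}(b). Your extra care about the support of the $\RR\HHom$ complex on $K$ and the identification $\Lc_\un\simeq\Omega^n_{\A^n_k/k}$ only makes explicit what the paper leaves implicit.
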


\begin{proof} Let us denote $i\colon K\hookrightarrow\A^n_{\FF_1}$, $i_S\colon S_K\hookrightarrow \A^n_k$ the natural inclusions. Then
\[ {i_S}_*\pi_K^\stella\,(\omega^\pun_K)=  \pi^\stella i_* (\omega_K^\pun) =   \pi^\stella\RR\HHom_{\A^n_{\FF_1}}^\pun (k_K,\omega_{\A^n_{\FF_1}}) \overset{\ref{extens}} =  \RR\HHom_{\A^n_k}^\pun (\OO_{S_K},\Omega_{\A^n_k})= {i_S}_* 
\omega^\pun_{S_K}.\]
\end{proof}

Notice that $\omega_K^\pun$ is explicitely described by $\omega_K^\pun=\HHom_{\A^n_{\FF_1}}(\Kos_\pun(k_K),\omega_{\A^n_{\FF_1}})_{\vert K}$. Hence, Theorem \ref{canonicalcomplex} gives a precise combinatorial description of the canonical complex of a Stanley--Reisner ring, which clarifies that of \cite{Gr}. Of course, Reisner's criterion is also an immediate consequence of Theorem \ref{canonicalcomplex} (taking into account Corollary \ref{CM=CM}).

\subsubsection{Projective Stanley--Reisner correspondence}. Let us now consider the exact functor
\[ \pi^\stella\colon \Shv(\PP^{n}_{\FF_1})\to \Qcoh(\PP^n_k)\] induced by the standard coordinate hyperplanes of $\PP^n_k$. For each simplicial complex $K\subseteq \A^{n+1}_{\FF_1}$, let us denote $K^*:=K\negmedspace-\negmedspace\0$, which is a closed subset of $\PP^n_{\FF_1}$. Then $\pi^\stella(k_{K^*})=\OO_{\PP(S_K)}$, where $\PP(S_K)$ is the projectivization of the Stanley--Reisner scheme $S_K$. For each $p\in\PP^n_{\FF_1}$ one has
\[\pi^\stella(k_{U_p})\simeq \OO_{\PP^n_\ZZ}(-c_p-1),\quad c_p=\dim C_p.\] One has also an exact functor
\[ \pi_{K^*}^\stella\colon \Shv(K^*)\longrightarrow \Qcoh(\PP(S_K)).\]

Let us denote $\omega_{\PP^n_{\FF_1}}=k_{\{\un\}}$, which is the dualizing sheaf of $\PP^n_{\FF_1}$, i.e., $\omega_{\PP^n_{\FF_1}}[n]$ is the global dualizing complex of $\PP^n_{\FF_1}$. One has $\pi^\stella \omega_{\PP^n_{\FF_1}}=\omega_{\PP^n_{k}}\simeq\OO_{\PP^n_k}(-n-1)$.  Moreover:

\begin{prop}\label{projectivesimplicial} One has $$\RR\pi_\stella \omega_{\PP^n_{k}} = \omega_{\PP^n_{\FF_1}}.$$  Consequently, for any $F\in D_{\text{\rm perf}}(\PP^n_{\FF_1})$, one has:
\[ H^i(\PP^n_{\FF_1},F)=H^i(\PP^n_k,\pi^\stella F).\] Then, for any simplicial complex $K$ and any $G\in D^b_c(K^*)$:
\[ H^i(K^*,G)=H^i(\PP(S_K),\pi_{K^*}^\stella G).\] In particular, $H^i(K^*,k) = H^i( \PP(S_K), \OO_{\PP(S_K)})$.
\end{prop}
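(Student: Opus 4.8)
The plan is to prove the three displayed formulas in turn: the first, $\RR\pi_\stella\omega_{\PP^n_k}=\omega_{\PP^n_{\FF_1}}$, is the substantive one, and the other two follow from it by duality. First I would compute $\RR\pi_\stella\omega_{\PP^n_k}$ stalkwise. Applying the (derived) description of the right adjoint from \ref{directimage} to $\pi\colon\PP^n_k\to\PP^n_{\FF_1}$, for each $p\in\PP^n_{\FF_1}$ one has
\[ (\RR\pi_\stella\omega_{\PP^n_k})_p=\RR\Hom^\pun_{\PP^n_k}(\Lc_p,\omega_{\PP^n_k})=\RR\Gamma(\PP^n_k,\omega_{\PP^n_k}\otimes\Lc_p^{-1}), \]
where $\Lc_p=\pi^\stella k_{U_p}\simeq\OO_{\PP^n_k}(-c_p-1)$ and $\omega_{\PP^n_k}\simeq\OO_{\PP^n_k}(-n-1)$, so the stalk equals $\RR\Gamma(\PP^n_k,\OO_{\PP^n_k}(c_p-n))$. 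Since $c_p=\dim C_p=|p|-1$ ranges over $\{0,\dots,n\}$ and equals $n$ exactly when $p=\un$, the twist $m=c_p-n$ lies in $\{-n,\dots,0\}$ and vanishes only for $p=\un$. The crux is the classical vanishing of line-bundle cohomology on $\PP^n_k$: for $-n\le m\le -1$ the sheaf $\OO(m)$ is acyclic, whereas $\RR\Gamma(\PP^n_k,\OO)=k$ concentrated in degree $0$. Hence the stalks are concentrated in degree $0$ and vanish off the open generic point $\un$, which forces $\RR\pi_\stella\omega_{\PP^n_k}=k_{\{\un\}}=\omega_{\PP^n_{\FF_1}}$.

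Next I would deduce $H^i(\PP^n_k,\pi^\stella F)=H^i(\PP^n_{\FF_1},F)$ for $F\in D_{\text{perf}}(\PP^n_{\FF_1})$ by duality alone. Both $\RR\Gamma(\PP^n_{\FF_1},F)$ and $\RR\Gamma(\PP^n_k,\pi^\stella F)$ are perfect over $k$ (the latter by properness, as $\pi^\stella F$ is a perfect complex assembled from line bundles), so it suffices to match their $k$-duals $(-)^\vee=\RR\Hom^\pun_k(-,k)$. Combining Serre duality on $\PP^n_k$ (dualizing complex $\omega_{\PP^n_k}[n]$), the adjunction $\LL\pi^\stella\dashv\RR\pi_\stella$, the identity just proved, and the fact that $D_{\PP^n_{\FF_1}}=k_{\{\un\}}[n]=\omega_{\PP^n_{\FF_1}}[n]$ dualizes cohomology, I obtain
\[ \RR\Gamma(\PP^n_k,\pi^\stella F)^\vee=\RR\Hom^\pun_{\PP^n_k}(\pi^\stella F,\omega_{\PP^n_k})[n]=\RR\Hom^\pun_{\PP^n_{\FF_1}}(F,\RR\pi_\stella\omega_{\PP^n_k})[n]=\RR\Hom^\pun_{\PP^n_{\FF_1}}(F,\omega_{\PP^n_{\FF_1}})[n]=\RR\Gamma(\PP^n_{\FF_1},F)^\vee. \]
Dualizing back, using perfectness, yields the equality of cohomologies.

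For a simplicial complex $K$ and $G\in D^b_c(K^*)$ I would reduce to the previous case through the closed immersions $j\colon K^*\hookrightarrow\PP^n_{\FF_1}$ and $j_S\colon\PP(S_K)\hookrightarrow\PP^n_k$. The commutative square of \ref{abierto-cerrado}(b) reads $\pi^\stella j_*={j_S}_*\pi_{K^*}^\stella$, and closed immersions preserve cohomology on both sides, so that $H^i(K^*,G)=H^i(\PP^n_{\FF_1},j_*G)$ and $H^i(\PP(S_K),\pi_{K^*}^\stella G)=H^i(\PP^n_k,{j_S}_*\pi_{K^*}^\stella G)=H^i(\PP^n_k,\pi^\stella j_*G)$. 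Applying the preceding paragraph to $F=j_*G$ (which is pointwise perfect whenever $G$ is) closes the loop. The special case $G=k$ then follows at once, since $\pi_{K^*}^\stella(k)=\OO_{\PP(S_K)}$.

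The main obstacle is the first step: one must correctly identify $\Lc_p^{-1}\simeq\OO_{\PP^n_k}(c_p+1)$ via $c_p=|p|-1$ and pin down precisely the acyclicity window $-n\le m\le -1$ of $\PP^n_k$, since it is exactly this cohomology gap of projective space that kills every stalk but the generic one. A secondary point of care is the passage from $D_{\text{perf}}$ to $D^b_c$: the double-dualization is unproblematic over a field (where the two categories coincide), and remains valid more generally when $k$ is Gorenstein of finite Krull dimension, by Proposition \ref{duality-1}.
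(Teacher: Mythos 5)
Your proposal is correct and follows essentially the same route as the paper: a stalkwise computation of $\RR\pi_\stella\omega_{\PP^n_k}$ via the adjunction $(\RR\pi_\stella\M)_p=\RR\Hom^\pun_{\PP^n_k}(\Lc_p,\M)$ and the acyclicity of $\OO_{\PP^n_k}(m)$ for $-n\le m\le -1$, followed by the duality chain through $\omega_{\PP^n_{\FF_1}}[n]=D_{\PP^n_{\FF_1}}$ and double dualization of perfect complexes, and finally the reduction to $F=i_*G$ for the closed subset $K^*$. Your added care about the acyclicity window, the perfectness of both global-section complexes, and the $D_{\text{perf}}$ versus $D^b_c$ issue only makes explicit what the paper leaves implicit.
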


\begin{proof} For each $p\in \PP^n_{\FF_1}$ one has:
\[\aligned  [\RR\pi_\stella \omega_{\PP^n_k}]_p&=\RR\Hom_{\PP^n_{\FF_1}}^\pun(k_{U_p}, \RR\pi_\stella \omega_{\PP^n_k}) = \RR\Hom_{\PP^n_{k}}^\pun(\pi^\stella k_{U_p}, \omega_{\PP^n_k}) 
\\ &= \RR\Hom_{\PP^n_{k}}^\pun(\OO_{\PP^n_k}(-1-c_p), \OO_{\PP^n_k}(-1-n )) =\left\{
\aligned k, &\text{ if } p=\un
\\ 0, &\text{ otherwise},\endaligned\right. 
\endaligned\] hence $\RR\pi_\stella \omega_{\PP^n_k}=k_{\{\un\}}$, as wanted. Now, 
\[ \aligned & \RR\Gamma(\PP^n_{\FF_1}, F)^\vee = \RR\Hom_{\PP^n_{\FF_1}}^\pun(F,\omega_{\PP^n_{\FF_1}}[n]) = \RR\Hom_{\PP^n_{\FF_1}}^\pun(F,\RR\pi_\stella \omega_{\PP^n_k}[n]) = \RR\Hom_{\PP^n_{k}}^\pun(\pi^\stella F,  \omega_{\PP^n_k}[n])\\ & = \RR\Gamma(\PP^n_{k}, \pi^\stella F)^\vee \endaligned\]
and then $\RR\Gamma(\PP^n_{\FF_1}, F)=\RR\Gamma(\PP^n_{k}, \pi^\stella F)$, because they are both perfect complexes.

%
%
The last formula follows by taking $F=i_*G$, with $i\colon K^*\hookrightarrow \PP^n_{\FF_1}$ the inclusion.
\end{proof}

%
%

This Proposition says that any problem in combinatorics that can be formulated in a cohomological way (i.e. as the vanishing of a cohomology group of a certain sheaf) can be translated to a cohomological problem on the associated Stanley--Reisner projective scheme. If $k$ is a field and $F$ is a sheaf of finite dimensional vector spaces, the equality $H^i(\PP^n_{\FF_1},F)=H^i(\PP^n_k,\pi^\stella F)$ is related to \cite[Thm. 3.3]{Ya01}.

Finally, regarding canonical complexes, one can repeat the same results of subsection \ref{canonicalcomplexsubsection} for the projective case: the complex on $K^*$
\[ \omega_{K^*}^\pun:= \HHom_{\PP^n_{\FF_1}}(\Kos_\pun(\ZZ_{K^*}),\omega_{\PP^n_{\FF_1}})_{\vert K^*}=(\omega_K^\pun)_{\vert K^*}\]  is a canonical complex on $K^*$, i.e., it induces reflexivity on  $D_\text{perf}(K^*)$; moreover, $\omega_{K^*}^\pun$ dualizes  cohomology: for any $F\in D (K^*)$ one has an isomorphism
\[ \RR\Hom_{K^*}^\pun (F,\omega_{K^*}^\pun)= \RR\Hom_k^\pun (\RR\Gamma(K,F),k)[-n]\]  and   $\pi_{K^*}^\stella (\omega_{K^*}^\pun)$ is the global dualizing complex of the projective Stanley--Reisner scheme $\PP(S_K)$ (shifted by $[-n]$).  If $K^*$ is Cohen--Macaulay, then $\omega_{K^*}^\pun$ is just a sheaf,  named {\em canonical sheaf of } $K^*$, and   $\pi_{K^*}^\stella \omega_{K^*}=\omega_{\PP(S_K)}$ is the canonical sheaf of the Cohen--Macaulay Stanley--Reisner projective scheme $\PP(S_K)$.

\end{document}